\documentclass[12pt]{article}

\usepackage{fancyhdr, amsmath, amssymb, amsfonts, url, subfigure, dsfont, mathrsfs, graphicx, subfigure, amsthm, authblk}
\bibliographystyle{plain}
 \usepackage{tikz}

\linespread{1.0}
\setlength{\textwidth}{6.5 in}
\setlength{\oddsidemargin}{0 in}

\newlength{\guillotine}
\setlength{\guillotine}{-\headheight}
\addtolength{\guillotine}{-\headsep}
\setlength{\topmargin}{\guillotine}
\setlength{\textheight}{9 in}

\pagestyle{fancyplain}

\newtheorem{thm}{Theorem}[section]
\newtheorem{cor}[thm]{Corollary}
\newtheorem{lemma}[thm]{Lemma}

\newtheorem{prop}[thm]{Proposition}

\newtheorem{definition}[thm]{Definition}

\theoremstyle{remark}
\newtheorem{rem}[thm]{Remark}
\begin{document}
\date{}

\title{Comparison theorems for closed geodesics \\  on negatively curved surfaces}
\author{Stephen Cantrell} 
\author{Mark Pollicott\thanks{Supported by the ERC Grant 833802-Resonances and  EPSRC grant EP/T001674/1. }}
\affil{Warwick University}

\newcommand{\Addresses}{{
  \bigskip
  \footnotesize

  S.~Cantrell, \textsc{Department of Mathematics, Warwick University,
    Coventry, CV4 7AL-UK}\par\nopagebreak
  \textit{E-mail address} \texttt{s.j.cantrell@warwick.ac.uk}

\medskip
  M.~Pollicott, \textsc{Department of Mathematics, Warwick University,
    Coventry, CV4 7AL-UK}\par\nopagebreak
  \textit{E-mail address} \texttt{masdbl@warwick.ac.uk}
}}

\maketitle

\begin{abstract}
In this note we present new asymptotic  estimates  comparing the word length and geodesic length of closed geodesics on surfaces with (variable) negative sectional curvatures. 
In particular, we provide an averaged comparison of these 
two important quantities and obtain precise statistical results, including a central limit theorem and a local limit theorem. 
Further, as a corollary we also improve an asymptotic formula of  R. Sharp  and the second author \cite{PS-average}. 
Finally, we
relate our results to recent work of Gekhtman, Taylor and Tiozzo \cite{gtt}.
\end{abstract}

\maketitle

\section{Introduction}
\subsection{background}
Let $V$ be a compact $C^\infty$ Riemannian surface with negative Gaussian curvature.
As is well known, each non-trivial  conjugacy class in the fundamental group $\pi_1(V)$ contains exactly one closed geodesic, thus we see there is a countable infinity of  closed geodesics on $V$. 

Let $\gamma$ denote a closed geodesic on $V$.
We can then associate to $\gamma$ two natural 
weights:
\begin{enumerate}
\item The length $l(\gamma)$ of the closed geodesic, i.e., its length in the Riemannian metric on $V$;
\item   The word length $|\gamma|$ of the closed geodesic, i.e., the smallest number of a fixed family of generators in the standard presentation needed to represent any element in the conjugacy class in $\pi_1(V)$
corresponding to the curve $\gamma$.
\end{enumerate}
A continuous change in the Riemannian metric leads to a change in the lengths of the closed geodesics but not the word length.
However, a change in the choice of  generators does lead to a change in the word length.

It is  well known that the lengths of closed geodesics tend to infinity and there is a  famous  result due to Margulis from 1969 \cite{margulis1} on their asymptotics
(see also \cite{margulis2}). More precisely, let $\pi(T)$ be the number of closed geodesics of length at most $T$ then 
$$
\pi(T) \sim \frac{e^{hT}}{hT}, \hbox{ as } T \to \infty
$$
where $h > 0$ denotes the topological entropy.
A more refined version with an error term is that
$$
\pi(T) = \hbox{\rm li}(e^{hT})
+ O\left( e^{(h-\epsilon) T} \right)
\hbox{ where }
 \hbox{\rm li}(x) := \int_2^{x} \frac{du}{\log u}. \eqno(1.1)
$$
Let us now consider a standard presentation
$$
\langle
a_1, \cdots, a_g, b_1, \cdots, b_g \hbox{ : }
\prod_{i=1}^g[a_i, b_i] = I
\rangle.
$$
where $g \geq 2$ is the genus of the surface.
Let $\Gamma = \{a_1^{\pm 1}, \cdots, a_g^{\pm 1}, b_1^{\pm 1}, \cdots, b_g^{\pm 1}\}$ be the symmetric family of generators.
We can define the word length of a closed geodesic 
$\gamma$ by
$$
|\gamma| = \inf \{k \hbox{ : } [\gamma] = [g_1 \cdots g_k] \hbox{ with }  g_1,  \cdots g_k \in \Gamma \}
$$
where $[\cdot ]$ denotes the conjugacy class.
\footnote{Other  presentations can be used but for simplicity we will concentrate on this one}

Another classical  result, a consequence of a deeper result of  Milnor,  is  that
the word length and geometric length are closely related \cite{milnor}. In particular, there exists a constant $C > 1$  such that for any closed geodesic $\gamma$  one has that 
$$
\frac{1}{C} \leq \frac{|\gamma|}{l(\gamma)} \leq C. \eqno(1.2)
$$

By taking a more statistical viewpoint one can get  finer  detail on the distribution of the values 
$\frac{|\gamma|}{l(\gamma)}$.
We begin by recalling a basic  asymptotic expression which we obtain when we average over 
closed geodesics $\gamma$ ordered by their lengths $l(\gamma)$.

\begin{prop}[\cite{PS-average}]
There exists $A > 0$  such that 
$$
\lim_{T \to \infty}  
\frac{1}{\pi(T)}
\sum_{l(\gamma) < T}
\frac{|\gamma|}{l(\gamma)} = A. \eqno(1.3)
$$
\end{prop}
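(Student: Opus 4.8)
\emph{Overview and symbolic model.} The plan is to realize both weights $l(\gamma)$ and $|\gamma|$ within a single symbolic model of the geodesic flow, and then to deduce (1.3) from a twisted prime orbit count, that is, from the equidistribution of long closed geodesics with respect to the Bowen--Margulis measure. Since $V$ has negative curvature, the geodesic flow $\phi_t$ on the unit tangent bundle $SV$ is Anosov, and by the work of Bowen and Ratner it admits a Markov coding: there is a subshift of finite type $(\Sigma,\sigma)$, a strictly positive H\"older roof function $r\colon\Sigma\to\mathbb{R}^{+}$, and a semiconjugacy from the suspension $\Sigma^{r}$ onto $(SV,\phi_t)$ that is injective off an orbit set negligible for the counting problem. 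Closed geodesics $\gamma$ correspond, up to the standard bounded-to-one identifications along the boundary of the Markov partition, to periodic $\sigma$-orbits: if $\sigma^{n}x=x$ is the periodic point associated with $\gamma$, then $l(\gamma)=r^{n}(x)$, where $r^{n}=\sum_{j=0}^{n-1}r\circ\sigma^{j}$.

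\emph{The main obstacle: a symbolic cocycle for word length.} Because $V$ is a closed surface of genus $g\ge 2$, the group $\pi_1(V)$ is word-hyperbolic and, by the theorem of Milnor \cite{milnor}, its word metric is quasi-isometric to the lifted Riemannian metric on the universal cover; this is precisely what yields the coarse bound (1.2). The crux is to upgrade this two-sided bound to an \emph{almost-additive} statement: there should exist a H\"older function $\psi\colon\Sigma\to\mathbb{R}$ and a constant $C_{0}>0$ such that, for every closed geodesic $\gamma$ with associated periodic point $\sigma^{n}x=x$,
$$
\bigl|\, |\gamma| - \psi^{n}(x)\,\bigr|\ \le\ C_{0},
$$
so that word length is, up to bounded error, the Birkhoff sum of a H\"older function over the \emph{same} subshift that codes geometric length. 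One expects to produce $\psi$ from the automatic (Cannon) structure of surface groups --- equivalently, from a Bowen--Series type coding of the boundary action of $\pi_1(V)$ chosen compatibly with the Markov coding above --- together with the fact that in a hyperbolic group the conjugacy word length of an element differs by a bounded amount from its stable translation length, which in turn behaves additively. Establishing this compatibility, and the H\"older regularity of $\psi$, is the technical heart of the argument; once it is in hand the remaining steps are soft.

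\emph{Thermodynamic formalism and conclusion.} Let $m_{\Sigma}$ be the equilibrium state of $-hr$ on $\Sigma$, which projects to the Bowen--Margulis measure (the measure of maximal entropy of $\phi_t$), and set
$$
A\ :=\ \frac{\int_{\Sigma}\psi\,dm_{\Sigma}}{\int_{\Sigma}r\,dm_{\Sigma}}.
$$
The denominator is finite and positive; since the word lengths of closed geodesics tend to infinity one has $\int_{\Sigma}\psi\,dm_{\Sigma}>0$, and since $\psi$ is bounded the numerator is finite, so $0<A<\infty$ (indeed $A\in[1/C,C]$ for the constant $C$ of (1.2)). One then estimates $\sum_{l(\gamma)<T}\psi^{n}(x)/r^{n}(x)$ by the transfer-operator method of Parry--Pollicott and Lalley: the perturbed operators $\mathcal{L}_{-sr+z\psi}$ have a simple isolated leading eigenvalue depending analytically on $(s,z)$ near $(h,0)$, and differentiating the associated dynamical zeta function / generating function at $z=0$ brings down the Birkhoff sums $\psi^{n}(x) = A\,r^{n}(x) + (\psi-Ar)^{n}(x)$, where $\psi-Ar$ is H\"older with zero $m_{\Sigma}$-integral. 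Together with Margulis' asymptotic (1.1) this yields $\frac{1}{\pi(T)}\sum_{l(\gamma)<T}\psi^{n}(x)/r^{n}(x)\to A$ as $T\to\infty$. Finally, replacing $\psi^{n}(x)$ by $|\gamma|$ alters each summand by at most $C_{0}/l(\gamma)$, which is $O(1/T)$ on the closed geodesics with $l(\gamma)>T/2$, whereas those with $l(\gamma)\le T/2$ make up a proportion $\pi(T/2)/\pi(T)=O(e^{-hT/2})$ of the total and contribute negligibly by (1.2). Hence $\frac{1}{\pi(T)}\sum_{l(\gamma)<T}|\gamma|/l(\gamma)\to A$, which is (1.3).
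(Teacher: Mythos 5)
Your overall architecture --- code the flow symbolically, realize both weights over the same subshift, and run thermodynamic formalism plus a Perron-type count --- is the right one and matches the strategy of the paper (the proposition itself is quoted from Pollicott--Sharp \cite{PS-average}, and the present paper reproves a sharper version in Theorem 1.2 and Corollary 1.3 by exactly this route). However, the step you yourself flag as ``the technical heart'' --- the existence of a H\"older function $\psi$ on the shift with $\bigl|\,|\gamma|-\psi^{n}(x)\,\bigr|\le C_{0}$ --- is asserted (``one expects to produce $\psi$ from the automatic structure\ldots'') rather than established, and as written this is a genuine gap: Milnor's comparison (1.2) gives only a two-sided multiplicative bound and does not by itself yield an almost-additive H\"older cocycle for word length over an arbitrary Ratner--Bowen coding. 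The resolution used in the paper is to choose the coding itself to be the Bowen--Series coding of the boundary action: for that Markov interval map the word length of a closed geodesic is \emph{exactly} the symbolic period $n$ of the corresponding periodic orbit, up to finitely many exceptional geodesics --- i.e.\ $\psi\equiv 1$ and $C_{0}=0$. This is Lemma 2.1 of the paper, and the nontrivial content is the compatibility you are worried about: that a single Markov system simultaneously records geometric length (through the roof function) and word length (through the period), together with its extension from constant to variable curvature via structural stability. If you instead insist on a general Cannon/automatic-structure cocycle $\psi$, you must actually prove its H\"older regularity with respect to the symbolic metric on the \emph{same} shift whose roof function is $r$; that proof is absent.

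Two smaller points. First, your transfer-operator step naturally produces asymptotics for $\sum_{l(\gamma)<T}\psi^{n}(x)$ (via a two-variable generating function), not directly for the ratio sum $\sum_{l(\gamma)<T}\psi^{n}(x)/r^{n}(x)$; passing from the former to the latter requires a partial-summation (Stieltjes integral) argument, as in the paper's proof of Corollary 1.3, which you should make explicit rather than fold into ``together with Margulis' asymptotic this yields\ldots''. Second, once $\psi\equiv 1$ your constant specializes to $A=1/\int_{\Sigma} r\,dm_{\Sigma}$, which agrees with the value $\sigma'(0)$ identified in the paper; your tail estimate separating $l(\gamma)>T/2$ from $l(\gamma)\le T/2$ is fine.
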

 In the next section we present an asymptotic formula which significantly
 sharpens this result.  We shall then further improve upon this result by proving   
the corresponding Central Limit Theorem and Local Central Limit Theorem.

\subsection{New comparison results}
In this subsection we present a stronger version of the asymptotic formula $(1.3)$ in which we have an error term.  This will be a consequence of the following asymptotic average over word lengths. 

\begin{thm} \label{thm}[Asymptotic average of word length] \label{asymav}
There exists constants $A \in \mathbb{R}$ and  $\beta>0$ such that
$$ \frac{1}{\pi(T)} \sum_{l(\gamma) < T} |\gamma| = \frac{A}{h}\frac{e^{hT}}{\textnormal{li}(e^{hT})} \left( 1 + O(e^{-\beta T}) \right) 
\hbox{ as } T \to \infty.\eqno(1.4)
$$
\end{thm}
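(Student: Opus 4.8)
The plan is to obtain $(1.4)$ as a \emph{weighted} prime geodesic theorem, in which each closed geodesic $\gamma$ is counted with multiplicity $|\gamma|$ rather than $1$, and then to divide by the unweighted count $\pi(T)$ using $(1.1)$. The starting point is to realise both length functions symbolically. Via the Bowen--Series--Ratner coding of the geodesic flow one obtains a mixing subshift of finite type $(\Sigma,\sigma)$ together with a strictly positive Hölder roof function $r\colon\Sigma\to\mathbb{R}$ so that the prime periodic $\sigma$-orbits correspond, up to finitely many exceptions and bounded multiplicities, to the prime closed geodesics, with $l(\gamma)=r^{(n)}(x):=\sum_{i=0}^{n-1}r(\sigma^ix)$ whenever $\gamma$ is carried by a period-$n$ orbit through $x$; here $h$ is the unique real $s$ with $P(-sr)=0$, $P$ denoting pressure. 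The essential extra input is a Hölder function $w\colon\Sigma\to\mathbb{R}$ with $|\gamma|=w^{(n)}(x)$ on these orbits. (Equivalently, one may start from the geodesic automatic structure of $\pi_1(V)$, in which $|\gamma|$ is literally the symbolic period and $w\equiv 1$, the burden being instead that $l$ descends to a Hölder function there; this reflects a Hölder refinement of Milnor's comparison $(1.2)$ between the word metric and the Riemannian metric on the universal cover, cf.\ \cite{gtt}.)

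Granting this, introduce the two-variable zeta function
$$\zeta(s,z)=\prod_{\gamma \text{ prime}}\bigl(1-z^{|\gamma|}e^{-sl(\gamma)}\bigr)^{-1}=\exp\sum_{n\ge1}\frac1n\sum_{\sigma^nx=x}e^{(\log z)w^{(n)}(x)-sr^{(n)}(x)},$$
so that $\zeta(\cdot,z)^{-1}$ is, up to the standard caveats, a Fredholm determinant of the transfer operator $\mathcal{L}_{s,z}$ with potential $-sr+(\log z)w$ acting on Hölder functions on $\Sigma$. Classical transfer-operator theory (Ruelle, Parry--Pollicott) gives, for $z$ in a complex neighbourhood of $1$, a meromorphic extension of $\zeta(\cdot,z)$ to $\{\mathrm{Re}(s)>h-\epsilon\}$ whose only singularity there is a simple pole at $s=s(z)$, the unique solution of $P\bigl(-s(z)r+(\log z)w\bigr)=0$, with $s(1)=h$; the absence of further poles up to and slightly beyond $\mathrm{Re}(s)=h$ is precisely the exponential-mixing (Dolgopyat-type) input already underlying the error term in $(1.1)$, and it is stable under the perturbation in $z$. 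Differentiating $P\bigl(-s(z)r+(\log z)w\bigr)=0$ in $z$ and using the analyticity and first-variation formula for pressure, $s(z)$ is analytic near $z=1$ with $s'(1)=\dfrac{\int w\,d\mu}{\int r\,d\mu}=:A$, where $\mu$ is the equilibrium state of $-hr$ (the base of the measure of maximal entropy for the flow); in particular $A>0$, and it is exactly the constant appearing in $(1.3)$.

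It remains to pass from the zeta function to the counting function. From the product formula, $\partial_z\log\zeta(s,z)|_{z=1}=\sum_{\gamma \text{ prime}}|\gamma|e^{-sl(\gamma)}+E(s)$ with $E$ holomorphic on $\{\mathrm{Re}(s)>\tfrac h2+\epsilon\}$, while by the previous paragraph $\partial_z\log\zeta(s,z)|_{z=1}$ is meromorphic on $\{\mathrm{Re}(s)>h-\epsilon\}$ with a single simple pole, at $s=h$, of residue $s'(1)=A$; hence the Dirichlet series $D(s):=\sum_{\gamma}|\gamma|e^{-sl(\gamma)}$ has the same property. Writing $N(T):=\sum_{l(\gamma)<T}|\gamma|$ and noting $D(s)/s=\int_0^\infty e^{-sT}N(T)\,dT$, Perron's formula together with a contour shift past $s=h$ and the decay estimates for $D$ in the imaginary direction — the same Tauberian argument that proves $(1.1)$ — yield $N(T)=\tfrac{A}{h}e^{hT}+O(e^{(h-\beta)T})$ for some $\beta>0$. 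Dividing by $\pi(T)=\mathrm{li}(e^{hT})\bigl(1+O(e^{-\beta T})\bigr)$ from $(1.1)$ gives $(1.4)$ after shrinking $\beta$, and $(1.3)$ with the same $A$ then follows by partial summation since $e^{hT}/\mathrm{li}(e^{hT})\sim hT$.

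The main obstacle is the first step: producing a single symbolic model on which the geometric length is a Hölder Birkhoff sum (so that the entropy $h$, the transfer operator and its meromorphic continuation are available) \emph{and} the word length is an exact Birkhoff sum, or symbolic period, so that the weighting is genuinely dynamical. If $|\gamma|$ were only known to be additive up to a bounded error, that $O(1)$ per geodesic would sum to a term of size $O(\pi(T))=O(e^{hT}/T)$, enough to destroy the exponential gain and leave only an error $O(1/T)$ in $(1.4)$; securing the exact realisation via the automatic structure of the surface group is therefore exactly what makes the sharp statement possible. Once it is in place the remaining steps are a routine weighted version of the proof of $(1.1)$, and the same transfer-operator apparatus also underlies the central and local limit theorems announced above.
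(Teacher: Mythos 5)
Your proposal is correct and follows essentially the same route as the paper: the Bowen--Series coding realising $|\gamma|$ as the symbolic period, a Dirichlet series $\sum_\gamma |\gamma| e^{-sl(\gamma)}$ with a simple pole at $s=h$ of residue $A=1/\int r\,d\mu$ and Dolgopyat-type polynomial bounds on vertical lines, the effective Perron formula with a contour shift, and division by $\pi(T)$ via $(1.1)$. The only cosmetic difference is that you package this series as $\partial_z\log\zeta(s,z)|_{z=1}$ of the two-variable zeta function, whereas the paper works directly with the one-variable function $\eta_V(s)$ and reserves the two-variable machinery for the later limit theorems.
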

This result was partly motivated by a result of Cesaratto and Vall\'ee on quadratic numbers \cite{vallee}.
The expression in (1.4) has the advantage of having an exponential error term.  However, using the standard expansion for $\hbox{\rm li}(\cdot)$ we could write it in a more elementary, or less informative way, as
$$
 \frac{1}{\pi(T)} \sum_{l(\gamma) < T} |\gamma| =AT  + A_0 +  \sum_{k=1}^N \frac{A_n}{T^n} + O\left( \frac{1}{T^{N+1}}\right) 
\hbox{ as } T \to \infty 
$$
where $N \in \mathbb N$ and $A_0, A_1, A_2, \cdots $ are constants (with, for example, $A_0 = \frac{A}{h}$).

As an immediate application of Theorem \ref{thm}, we deduce the following improvement on the 
asymptotic in (1.2).

\begin{cor}\label{secondcor}
There exists $\beta>0$ such that
$$\frac{1}{\pi(T)} \sum_{l(\gamma)<T} \frac{|\gamma|}{l(\gamma)} = A + O(e^{-\beta T}).$$
\end{cor}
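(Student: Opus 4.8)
The plan is to deduce the corollary from Theorem~\ref{asymav} by a summation-by-parts argument: convert $\sum_{l(\gamma)<T}|\gamma|/l(\gamma)$ into the sum $\sum_{l(\gamma)<T}|\gamma|$ integrated against the weight $t\mapsto 1/t$, use Theorem~\ref{asymav} together with the prime geodesic theorem (1.1) to replace that weighted sum by its main term, recognise the resulting integral as $\mathrm{li}(e^{hT})$ up to an $O(1)$ error, and finally divide by $\pi(T)$.

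In detail, write $S(T):=\sum_{l(\gamma)<T}|\gamma|$ and let $\ell_0>0$ be the length of the shortest closed geodesic, so that $S$ vanishes on $[0,\ell_0]$. Treating $S$ as the distribution function of a discrete measure and integrating by parts in the Riemann--Stieltjes sense against $t\mapsto 1/t$ gives
\[
\sum_{l(\gamma)<T}\frac{|\gamma|}{l(\gamma)}\;=\;\frac{S(T)}{T}+\int_{\ell_0}^{T}\frac{S(t)}{t^{2}}\,dt .
\]
The first substantive step is to extract a clean asymptotic for $S(t)$. Using $\mathrm{li}(e^{ht})\sim e^{ht}/(ht)$, the error term in (1.1) can be rewritten as $\pi(t)=\mathrm{li}(e^{ht})\bigl(1+O(e^{-\epsilon' t})\bigr)$ for some $\epsilon'>0$, and combining this with Theorem~\ref{asymav} yields
\[
S(t)=\pi(t)\cdot\frac{A}{h}\,\frac{e^{ht}}{\mathrm{li}(e^{ht})}\bigl(1+O(e^{-\beta t})\bigr)=\frac{A}{h}\,e^{ht}+O\!\left(e^{(h-\beta')t}\right)
\]
for a suitable $\beta'>0$.

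Substituting this into the summation-by-parts identity, the error term contributes only $O(e^{(h-\beta')T})$ (since $\int_{\ell_0}^{T}e^{(h-\beta')t}t^{-2}\,dt$ is of that size), so
\[
\sum_{l(\gamma)<T}\frac{|\gamma|}{l(\gamma)}=\frac{A}{h}\left(\frac{e^{hT}}{T}+\int_{\ell_0}^{T}\frac{e^{ht}}{t^{2}}\,dt\right)+O\!\left(e^{(h-\beta')T}\right).
\]
Since $\frac{d}{dt}\mathrm{li}(e^{ht})=e^{ht}/t$, integrating $e^{ht}/t$ by parts shows that $\frac{e^{hT}}{hT}+\frac{1}{h}\int_{\ell_0}^{T}\frac{e^{ht}}{t^{2}}\,dt=\mathrm{li}(e^{hT})+O(1)$, whence $\sum_{l(\gamma)<T}|\gamma|/l(\gamma)=A\,\mathrm{li}(e^{hT})+O(e^{(h-\beta')T})$. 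Dividing by $\pi(T)$, using (1.1) once more together with $\pi(T)^{-1}=O(Te^{-hT})$, gives $\pi(T)^{-1}\sum_{l(\gamma)<T}|\gamma|/l(\gamma)=A+O(e^{-\beta T})$ for a suitable $\beta>0$, as required.

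I do not expect a genuine obstacle here: the argument is essentially bookkeeping with partial summation and elementary estimates on $\mathrm{li}$. The one point requiring a little care is the cancellation of the $O(1/T)$-scale corrections arising separately from $S(T)/(T\pi(T))$ and from the integral $\int_{\ell_0}^{T}S(t)t^{-2}\,dt$; these cancel automatically once everything is expressed through $\mathrm{li}(e^{hT})$ rather than through its asymptotic expansion, which is precisely why one should avoid expanding $\mathrm{li}$ until the very end. Tracking the exponential rates, the constant $\beta$ in the corollary can be taken to be any value below $\min\{\beta_0,\epsilon\}$, where $\beta_0$ is the rate in Theorem~\ref{asymav} and $\epsilon$ is the rate appearing in (1.1).
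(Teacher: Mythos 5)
Your proposal is correct and follows essentially the same route as the paper: a Riemann--Stieltjes summation by parts against $1/t$, the unnormalised asymptotic $\sum_{l(\gamma)<t}|\gamma| = \tfrac{A}{h}e^{ht}(1+O(e^{-\beta t}))$, the identity expressing $\tfrac{e^{hT}}{hT}+\tfrac1h\int^T e^{ht}t^{-2}\,dt$ as $\mathrm{li}(e^{hT})+O(1)$, and a final division by $\pi(T)$. The only cosmetic difference is that you recover the unnormalised asymptotic from the normalised statement of Theorem~\ref{asymav}, whereas the paper quotes it directly from its equation (4.1).
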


We will prove this corollary after the proof of Theorem \ref{asymav}. \\ \indent

\subsection{Central Limit Theorems}
Another finer  form of statistical result, comparing the word length and geodesic length of individual closed geodesics, are  Central Limit Theorems.
We begin with another asymptotic expression which can be used to introduce the value of a variance $\sigma^2$.

\begin{thm}[Asymptotic Variance]\label{asymvar}
There exists $\sigma^2>0$, $D \in \mathbb R$
 and $\beta>0$ and 
$$\frac{1}{\pi(T)} \sum_{l(\gamma)<T} \left(|\gamma| - \frac{1}{\pi(T)} \sum_{l(\gamma)<T} |\gamma| \right)^2 = \sigma^2T  + D + O(T^{-1}) \hbox{ as $T \to\infty$.}$$
\end{thm}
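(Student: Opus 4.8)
The plan is to obtain the variance asymptotic from precise expansions of the first two moments $M_k(T) := \frac{1}{\pi(T)}\sum_{l(\gamma)<T}|\gamma|^k$, $k=1,2$, via the identity
$$\frac{1}{\pi(T)}\sum_{l(\gamma)<T}\bigl(|\gamma|-M_1(T)\bigr)^2 \;=\; M_2(T)-M_1(T)^2 .$$
Expanding the $\mathrm{li}$ factor in Theorem \ref{asymav} already gives $M_1(T)=AT+A_0+O(T^{-1})$, so everything reduces to producing an expansion $M_2(T)=A^2T^2+B_1T+B_0+O(T^{-1})$ with the \emph{same} quadratic coefficient $A^2$: once this is known the $T^2$ terms cancel and one reads off $\sigma^2=B_1-2AA_0$ and $D=B_0-A_0^2$.

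To get at $M_2$ I would use the symbolic model already underlying Theorem \ref{asymav}: the geodesic flow is coded as a suspension over a mixing subshift of finite type $(\Sigma,\sigma)$ with Hölder roof $r$, a closed geodesic $\gamma$ of period $n$ corresponds to $x=\sigma^nx$ with $l(\gamma)=r^n(x):=\sum_{j=0}^{n-1}r(\sigma^jx)$, and — after passing to the coding of the boundary action of $\pi_1(V)$ — the word length is an honest Birkhoff sum $|\gamma|=f^n(x)$ of a Hölder function $f$. (Verifying that $|\gamma|$ is genuinely a Birkhoff sum, and not merely one up to an uncontrolled $O(1)$ error, is essential here: an $O(1)$ discrepancy in $|\gamma|$ would perturb $M_2(T)$ at order $T$ and hence corrupt $\sigma^2$.) Introduce the two-parameter transfer operators $(L_{s,\xi}w)(x)=\sum_{\sigma y=x}e^{-sr(y)+\xi f(y)}w(y)$ on a space of Hölder functions; for $(s,\xi)$ near $(h,0)$ these have a simple leading eigenvalue $e^{P(s,\xi)}$ analytic in $(s,\xi)$ with $P(h,0)=0$, and $P(s(\xi),\xi)=0$ defines an analytic branch $s(\xi)$ with $s(0)=h$. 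Standard thermodynamic formalism identifies $s'(0)=A$ and $s''(0)=\sigma^2$, the latter being positive precisely because $f$ is not cohomologous over $\sigma$ to a constant multiple of $r$ — i.e.\ $|\gamma|/l(\gamma)$ is genuinely non-constant — which is what forces the variance to grow linearly rather than stay bounded.

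Then, exactly as in the contour-integration argument behind (1.1) and Theorem \ref{asymav}, but now applied to the two-variable zeta function $\zeta(s,\xi)=\prod_{\gamma}\bigl(1-e^{\xi|\gamma|-sl(\gamma)}\bigr)^{-1}$, I would prove
$$\sum_{l(\gamma)<T}e^{\xi|\gamma|} \;=\; C(\xi)\,\mathrm{li}\!\bigl(e^{s(\xi)T}\bigr)\;+\;O\!\bigl(e^{(s(\xi)-\epsilon)T}\bigr)$$
uniformly for $\xi$ in a small complex neighbourhood of $0$, with $C(\xi)$ analytic and $C(0)=1$; the exponential remainder rests on a spectral gap for $L_{s,\xi}$ together with Dolgopyat-type bounds on $\|L_{s,\xi}^n\|$ for $|\operatorname{Im}s|$ large, with constants uniform in $\xi$. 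Differentiating this identity $k$ times in $\xi$ and setting $\xi=0$ writes $M_k(T)$ ($k=1,2$) as $\pi(T)^{-1}$ times a finite combination of $\mathrm{li}(e^{hT})$ and polynomials in $T$ of degree $\le k-1$ multiplied by $e^{hT}$; feeding in $\pi(T)=\mathrm{li}(e^{hT})+O(e^{(h-\epsilon)T})$ and the asymptotic expansion of $\mathrm{li}$ yields the required expansions, with $B_1,B_0$ explicit in $s'(0),s''(0),C'(0),C''(0)$. Because $s'(0)=A$, the $T^2$ term of $M_2$ equals $A^2T^2$, matching that of $M_1(T)^2$, and subtracting leaves $\sigma^2T+D+O(T^{-1})$; tracking the surviving coefficient confirms $\sigma^2=s''(0)>0$. (Heuristically $\pi(T)^{-1}\sum_{l(\gamma)<T}e^{\xi|\gamma|}\approx e^{(s(\xi)-h)T}\cdot(\ldots)$ behaves like the moment generating function of a Gaussian of mean $AT$ and variance $\sigma^2T$, which is exactly what the Central Limit Theorem in the next subsection will formalise.)

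The main obstacle is the uniform analytic input: establishing the meromorphic continuation of $\zeta(s,\xi)$ past $\operatorname{Re}s=s(\xi)$, with the attendant oscillatory-cancellation (Dolgopyat) estimates, \emph{uniformly} for $\xi$ near $0$, so that the exponential error term survives two $\xi$-derivatives. A softer Tauberian argument would give only the leading term $\sigma^2T$; the stated $O(T^{-1})$ remainder requires the full strength of these estimates, together with the coding point flagged above, without which the exact cancellation of the $T^2$-terms and the identification $\sigma^2=s''(0)>0$ could not be guaranteed.
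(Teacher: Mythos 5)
Your proposal follows essentially the same route as the paper: the two--variable zeta function $\zeta(s,z)=\prod_\gamma(1-z^{|\gamma|}e^{-sl(\gamma)})^{-1}$ built from the Bowen--Series coding (in which $|\gamma|$ is exactly the period $n$ of the coded orbit, so your ``Birkhoff sum'' $f$ is just the constant function $1$), the implicitly defined pole $s=\sigma(z)$ with $z+P(-\sigma(z)\tilde r)=0$, uniform-in-$z$ Perron and Dolgopyat estimates, and the identifications $\sigma'(0)=A$, $\sigma''(0)=\sigma^2>0$ via strict convexity of the pressure. The only real difference is the final extraction step: the paper feeds the uniform quasi-power expression $C_z(T)/C_0(T)=\tfrac{h}{\sigma(z)}e^{(\sigma(z)-h)T}(1+O(1/T))$ into Hwang's Quasi-Power Theorem, which reads off the variance as the second derivative of the \emph{logarithm} of the moment generating function (a cumulant computation), whereas you subtract raw moments, $M_2(T)-M_1(T)^2$.

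That choice creates one concrete imprecision in your write-up: if you only record $M_1(T)=AT+A_0+O(T^{-1})$, then $M_1(T)^2=A^2T^2+2AA_0T+A_0^2+O(1)$, because the cross term $2AT\cdot O(T^{-1})$ is $O(1)$. Subtracting from $M_2(T)=A^2T^2+B_1T+B_0+O(T^{-1})$ then yields only $\sigma^2T+O(1)$, which does not pin down the constant $D$ or deliver the claimed $O(T^{-1})$ remainder. This is fixable within your own framework --- retain one further term of the $\mathrm{li}$ expansion so that $M_1(T)=AT+A_0+A_1T^{-1}+O(T^{-2})$ --- but the cumulant/Hwang route avoids the cancellation bookkeeping entirely, since $\log(C_z(T)/C_0(T))=TU(z)+V(z)+O(1/T)$ uniformly on a complex disc and two applications of Cauchy's estimate give the variance as $TU''(0)+V''(0)+O(1/T)$ directly. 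Everything else you flag as the ``main obstacle'' (uniformity of the meromorphic continuation and of the Dolgopyat bounds in the auxiliary variable) is precisely what the paper's Proposition 5.2 and the surrounding estimates establish.
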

We then prove the following Central Limit Theorem which compares the word length and geometric length of individual closed geodesics. Our methods show that the Berry-Esseen error term holds.

\begin{thm}[Central Limit Theorem] \label{clt}
	For any real values $a < b$ we have 
$$ \frac{1}{\pi(T)} \# \left\{ l(\gamma) < T : a \leq \frac{|\gamma| - A T}{\sqrt{T}} \le b \right\} = \frac{1}{\sqrt{2\pi} \sigma} \int_{a}^b e^{-u^2/2\sigma} \ dt + O\left( \frac{1}{\sqrt{T}} \right) \hbox{  as $T \to \infty$}.$$
\end{thm}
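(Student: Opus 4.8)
The plan is to realize the counting over closed geodesics as a dynamical sum over a suitable symbolic model and then apply a central limit theorem with Berry–Esseen bounds for Hölder functions over a subshift of finite type, together with an argument that transfers the CLT from the symbolic weighting to the weighting by $\pi(T)$. More precisely, one encodes the closed geodesics on $V$ via the geodesic flow on the unit tangent bundle, which is coded by a suspension flow over a subshift of finite type $(\Sigma, \sigma)$ with a Hölder roof function $r : \Sigma \to \mathbb{R}_{>0}$ representing geometric length; periodic orbits of the flow of period $\approx T$ correspond to periodic points $x = \sigma^n x$ with $r^n(x) := \sum_{k=0}^{n-1} r(\sigma^k x) \approx T$. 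The word length $|\gamma|$ is, up to a bounded error, a Hölder observable: writing a periodic word in terms of the generators $\Gamma$ and using the automatic/geodesic-combing structure of the surface group (a hyperbolic group), the word length along the corresponding periodic orbit is $\varphi^n(x) + O(1)$ for some Hölder function $\varphi : \Sigma \to \mathbb{R}$, with $\int \varphi \, d\mu = A \int r \, d\mu$ where $\mu$ is the equilibrium state of maximal entropy (this is exactly the mechanism underpinning Proposition 1.2 and Theorem 1.3). Then the centered observable is $\psi := \varphi - A r$, which has zero mean with respect to $\mu$, and $\sigma^2$ is its asymptotic variance as in Theorem 1.5; non-degeneracy $\sigma^2 > 0$ follows because $\psi$ is not cohomologous to a constant (if it were, the ratios $|\gamma|/l(\gamma)$ would be essentially constant, contradicting the genuine fluctuation visible already in low-complexity examples).

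The core analytic step is to study the family of transfer operators $\mathcal{L}_{-sr + it\psi}$ acting on the Hölder space over $\Sigma$, where $s$ is near the critical value $h$ (so that $\mathcal{L}_{-hr}$ has leading eigenvalue $1$) and $t$ is a real perturbation parameter. Standard Ruelle–Perron–Frobenius perturbation theory gives a simple leading eigenvalue $\lambda(s,t)$ depending analytically on $(s,t)$ near $(h,0)$, with $\lambda(h,0)=1$, $\partial_t \lambda(h,0) = 0$ (zero mean), and $-\partial_t^2 \lambda(h,0)$ related to $\sigma^2$. Using the contour-integral / Tauberian machinery exactly as in the proof of the prime geodesic theorem (1.1) — but now keeping track of the extra parameter $t$ — one obtains, for each fixed $t$ in a neighborhood of $0$,
$$
\sum_{l(\gamma) < T} e^{it \left( |\gamma| - AT \right)/\sqrt{T}} = \mathrm{li}(e^{hT}) \, e^{-\sigma^2 t^2 / 2} \left( 1 + O\!\left( T^{-1/2} \right) \right) \hbox{ as } T \to \infty,
$$
uniformly for $t$ in compact sets, where the Gaussian factor emerges from the second-order Taylor expansion of $\lambda$ and the rescaling by $\sqrt{T}$. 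Dividing by $\pi(T) \sim \mathrm{li}(e^{hT})$ and invoking the Berry–Esseen smoothing inequality (Esseen's lemma), which converts uniform control of a characteristic function on $[-\delta\sqrt{T}, \delta\sqrt{T}]$ into a $O(T^{-1/2})$ bound on the corresponding distribution functions, yields the claimed statement with the stated error term. The bounded discrepancy between $|\gamma|$ and the Birkhoff sum $\varphi^n(x)$, being $O(1)$, is absorbed into the error after rescaling by $\sqrt{T}$.

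The main obstacle I anticipate is twofold. First, establishing that word length is genuinely a Hölder coboundary-plus-Birkhoff-sum over the symbolic model with a controlled $O(1)$ error: this requires care with the combinatorial group theory (the interplay between the geodesic coding of the flow and a geodesic automatic structure on $\pi_1(V)$, matching conjugacy classes to periodic orbits without losing uniformity), and is the step where the specific surface-group presentation enters. Second, and more technically delicate, is obtaining the error term $O(T^{-1/2})$ uniformly in $t$ in the two-variable Tauberian argument: one needs the spectral gap of $\mathcal{L}_{-sr+it\psi}$ and, crucially, bounds on the transfer operator for large imaginary part of the parameter (a Dolgopyt-type non-local-integrability / oscillatory-cancellation estimate, which already underlies the error term in (1.1)), now made uniform as $t$ ranges over a fixed neighborhood of the origin. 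Granting the results quoted earlier in the paper — in particular Theorem 1.3 and Theorem 1.5, whose proofs presumably already set up this transfer-operator framework — the remaining work is to differentiate that framework once more in the auxiliary parameter and feed the output through Esseen's inequality.
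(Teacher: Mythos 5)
Your outline is correct and, at its analytic core, coincides with the paper's proof: both rest on a two-parameter perturbation of the transfer operator (equivalently, the two-variable zeta function $\zeta(s,z)$), Dolgopyat-type bounds for $|\mathrm{Im}(s)|\ge 1$, a Perron-formula/Tauberian step made uniform in the auxiliary parameter, and a characteristic-function-to-distribution-function conversion with Berry--Esseen rate. The paper packages that last step as Hwang's Quasi-Power Theorem applied to $C_z(T)/C_0(T)=\tfrac{h}{\sigma(z)}e^{(\sigma(z)-h)T}\left(1+O(1/T)\right)$, which is precisely your Esseen smoothing argument in disguise, so nothing is gained or lost there. The one substantive divergence is your treatment of word length: you posit a H\"older observable $\varphi$ on the shift with $|\gamma|=\varphi^n(x)+O(1)$, to be constructed from an automatic structure on $\pi_1(V)$, and you rightly flag this as a main obstacle. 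The paper sidesteps it entirely by using the Bowen--Series coding (Lemma 2.1), under which the word length of a conjugacy class \emph{equals} the symbolic period $n$ exactly, up to finitely many exceptional geodesics; the word-length observable is then the constant function $1$, the perturbed operator is simply $e^{z}\mathcal{L}_s$, and the pole $\sigma(z)$ is cut out by $z+P(-\sigma(z)\tilde r)=0$. Your more general framing would work --- it is essentially the Gekhtman--Taylor--Tiozzo / Cantrell--Sharp setup --- but it imports the hardest combinatorial step of that literature where none is needed, and with the Bowen--Series coding your $O(1)$-discrepancy discussion becomes vacuous. One further caution: the uniformity in the auxiliary variable of the Perron-formula error terms, which you correctly identify as delicate, is where the bulk of the paper's work actually lies --- in particular the inequality $\mathrm{Re}(\sigma(z))\le\sigma(\mathrm{Re}(z))$ and the uniform bounds on $\zeta'(s,z)/\zeta(s,z)$ obtained via the Ellison lemma and Phragm\'en--Lindel\"of; your outline assumes rather than establishes these.
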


In \cite{gtt} Gekhtman, Taylor and Tiozzo proved a different type of Central Limit Theorem where closed geodesics are ordered by their word length, 
motivated as they were by the experimental results of Chas, Li and Maskit \cite{CLM}. 
However, the ordering by length of closed geodesics   in  Theorem \ref{clt} seems more in keeping with the counting results of Margulis \cite{margulis1} 
and others.

There is also a local central limit theorem with an error term.  This is a more subtle asymptotic estimate on the difference of the individual terms 
 $l(\gamma)$ and $|\gamma|$, but without the scaling by $\sqrt{T}$.

 \begin{thm}[Local Central Limit Theorem] \label{llt}	For $x \in \mathbb R$ we have
 	$$
 	\frac{1}{\pi(T)}
 	\# \left\{ 
 	  l(\gamma) < T \hbox{ : }
 	x -\frac{1}{2}  < 
 	|\gamma| - A  T
 	\leq  x +\frac{1}{2}
 	\right\}
 	= \frac{  e^{-x^2/2\sigma T} }{\sqrt{2\pi}\sigma \sqrt{T}} 
 	+ O\left( \frac{1}{\sqrt{T}} \right),
 	$$
 	as $T\to\infty$.
 \end{thm}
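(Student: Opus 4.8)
The plan is to reduce the counting problem over closed geodesics to a dynamical statement about a suspension flow, and then to run a Fourier-analytic (local limit) argument using transfer operators. First I would encode the closed geodesics via a subshift of finite type: by the Bowen–Series coding (or an orbit equivalence between the geodesic flow and a suspension flow over a subshift of finite type $\sigma : \Sigma \to \Sigma$), closed geodesics correspond, up to bounded multiplicity and a negligible boundary set, to periodic orbits of $\sigma$. Under this coding, the geodesic length $l(\gamma)$ becomes a Birkhoff sum $r^n(x)$ of a Hölder roof function $r : \Sigma \to \mathbb{R}^+$ over a periodic point $x = \sigma^n x$, and the word length $|\gamma|$ becomes (up to a bounded error, which one must control) a Birkhoff sum $f^n(x)$ of a function $f$ that is Hölder, or at least regular enough — this regularity of the word-length cocycle is exactly the technical input already used to prove Theorems 1.4–1.7, so I would quote it. The counting measure on closed geodesics with $l(\gamma) < T$ is then, by the Margulis-type equidistribution (1.1) and its refinements, asymptotically governed by the equilibrium state $\mu$ for the potential $-h r$, and $A = \int f \, d\mu / \int r \, d\mu$, $\sigma^2$ the associated dynamical variance.

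The core of the argument is a two-variable transfer operator analysis. For $s$ near $h$ and $\theta$ near $0$, I would introduce the family $L_{s,\theta} g(x) = \sum_{\sigma y = x} e^{-s r(y) + i\theta (f(y) - A r(y))} g(y)$ acting on a space of Hölder functions. At $\theta = 0$ this is the standard operator whose leading eigenvalue at $s = h$ is $1$; by analytic perturbation theory it has a simple leading eigenvalue $\lambda(s,\theta)$ depending analytically on $(s,\theta)$, with the expansion $\lambda(h, \theta) = 1 - \tfrac{1}{2}\sigma^2 (\text{const}) \theta^2 + O(\theta^3)$ encoding the variance from Theorem 1.6. The generating function $\sum_{\gamma} e^{-s l(\gamma)} e^{i\theta(|\gamma| - A l(\gamma))}$ is then, via the standard periodic-orbit/trace-formula identity, meromorphic in $s$ with a pole where $\lambda(s,\theta) = 1$, located at $s = s(\theta) = h + \tfrac{1}{2} c\,\sigma^2 \theta^2 + O(\theta^3)$. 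A contour-integration (Tauberian) step over $s$ converts the location of this pole into the asymptotics of $\sum_{l(\gamma) < T} e^{i\theta(|\gamma| - AT)}$, giving $\pi(T) e^{-\sigma^2 \theta^2 T/2}(1 + o(1))$ uniformly for $\theta$ in compact sets, together with the error control (Dolgopyat-type spectral bounds on $L_{s,\theta}$ for $\theta$ bounded away from $0$, which are needed both for the Berry–Esseen rate and to push the contour) that already underlies Theorems 1.4 and 1.7.

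Finally, to pass from this characteristic-function estimate to the local statement about the count in a window $(x - \tfrac12, x + \tfrac12]$, I would apply Fourier inversion: the indicator of a unit interval has Fourier transform that is integrable and supported (effectively) on $\theta \in [-\pi, \pi]$, so
\[
\frac{1}{\pi(T)} \#\{ l(\gamma) < T : |\gamma| - AT \in (x-\tfrac12, x+\tfrac12]\}
= \frac{1}{2\pi}\int_{-\pi}^{\pi} e^{-i\theta x}\,\widehat{\chi}(\theta)\,\Big(\frac{1}{\pi(T)}\sum_{l(\gamma)<T} e^{i\theta(|\gamma| - AT)}\Big)\, d\theta,
\]
after accounting for the integer-valuedness of $|\gamma|$ (the relevant lattice is $\mathbb{Z}$; one must check the cocycle $f - Ar$ is non-lattice modulo this, i.e. aperiodicity, which is where the hypothesis that the coding is genuinely two-dimensional enters). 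Substituting the uniform Gaussian asymptotics of the characteristic function on $|\theta| \le \delta$, and using the spectral gap to bound the contribution of $\delta \le |\theta| \le \pi$ by $O(T^{-\infty})$ (or at least $o(T^{-1/2})$), a Laplace-method evaluation of $\int e^{-i\theta x} e^{-\sigma^2 \theta^2 T/2}\, d\theta$ yields the Gaussian density $e^{-x^2/2\sigma T}/(\sqrt{2\pi}\sigma\sqrt{T})$ with the stated $O(T^{-1/2})$ error.

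The main obstacle I anticipate is the aperiodicity (non-lattice) verification for the pair $(r, f)$: one must rule out that $f - Ar$ is cohomologous to a function taking values in a proper subgroup of $\mathbb{R}$ relative to the integer lattice coming from $|\gamma| \in \mathbb{Z}$, since a spurious lattice would replace the clean Gaussian by a sum over residue classes. This is typically handled by a closed-orbit/commutator argument showing the group generated by the values $\{|\gamma_0| - A\, l(\gamma_0)\}$ over a suitable family of closed geodesics is dense in $\mathbb{R}$, but making it rigorous in the presence of variable curvature (so that $l(\gamma)$ is genuinely irrational-related to the integer word lengths) requires care; after that, the Dolgopyat-type estimates needed for the uniform control in $\theta$ are the other substantial — though by now standard — ingredient.
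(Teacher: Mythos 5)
Your proposal follows essentially the same route as the paper: a two-variable zeta function/perturbed transfer operator, Perron-type contour integration giving the quasi-power (Gaussian) asymptotics of $\frac{1}{\pi(T)}\sum_{l(\gamma)<T}e^{it|\gamma|}$ for small $t$ and exponential decay for $t$ bounded away from $0$ in $[-\pi,\pi]$, followed by Fourier inversion over the circle and a Laplace/Baladi--Vall\'ee evaluation of the Gaussian integral. The aperiodicity issue you flag as the main obstacle is resolved trivially in the paper's setting, since $|\gamma|$ equals the symbolic period $n$, so the perturbed operator is just $e^{it}\mathcal{L}_s$ and the absence of poles of $\zeta(s,it)$ on $\mathrm{Re}(s)=h$ for $t\in[v,\pi]$ follows from $e^{it}\neq 1$ together with weak mixing of the geodesic flow.
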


To obtain Theorem \ref{asymvar} and Theorem \ref{clt}, we combine our analysis with the results of Hwang from  \cite{hwang1} and \cite{hwang2} (see also \cite{fs} for a nice account). Theorem \ref{asymav} and Theorem \ref{llt} will follow from more classical arguments.\\ \indent

 Of particular interest is the dynamical interpretation and properties of the (new)  
 constants  $D$ and $\sigma^2$.

\begin{prop}\label{constants}
	The constants $A, \sigma^2$ and $D$  have a smooth dependence on the Riemannian metric on the surface.
\end{prop}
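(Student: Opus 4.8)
The plan is to trace through the proofs of Theorems~\ref{asymav} and~\ref{asymvar} and to observe that every ingredient out of which the constants $A$, $\sigma^2$ and $D$ are built depends smoothly --- in fact real-analytically --- on the Riemannian metric, and then to invoke the analytic perturbation theory of transfer operators. First I would recall the symbolic picture underlying those proofs: a Markov section for the geodesic flow yields a subshift of finite type $(\Sigma,\sigma)$, a strictly positive H\"older roof function $r = r_g$ recording geodesic length, and a H\"older function $f$ on $\Sigma$ recording word length via the strongly Markov automatic structure of the surface group. Closed geodesics correspond to periodic $\sigma$-orbits, and the constants are read off from the two-variable pressure function $P(-s r + t f)$. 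Concretely, $A$, $\sigma^2$ and $D$ are expressed through the function $\beta(t)$ implicitly defined near $t = 0$ by $P(-\beta(t) r + t f) = 0$ (so $\beta(0) = h$), together with the leading eigenprojections of the associated transfer operators and finitely many of their $t$-derivatives at $t = 0$: for instance $A$ is determined by $\beta'(0)$ and $\sigma^2$ by $\beta''(0)$, while $D$ also involves the first few $t$-derivatives of the residue/projection data feeding into the asymptotics of Hwang \cite{hwang1}.

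Second, I would record the dependence on the metric. Fix a $C^\infty$ (or real-analytic) family $g_\lambda$ of negatively curved metrics on $V$. By structural stability of the geodesic flow and the persistence of Markov sections, one may use a single combinatorial model $(\Sigma,\sigma)$ for all $\lambda$ near a given parameter, with $\lambda \mapsto r_{g_\lambda}$ depending smoothly (analytically, for analytic families) on $\lambda$ as a map into a fixed space of H\"older functions, after, if necessary, a H\"older time change that does not affect the lengths of periodic orbits. The key point is that the word-length potential $f$ is determined by the group and the chosen generators alone, hence is \emph{independent} of $\lambda$; consequently $(\lambda, s, t) \mapsto P(-s r_{g_\lambda} + t f)$ is smooth.

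Finally, I would apply Ruelle's theorem that the leading eigenvalue $e^{P(\phi)}$ and the corresponding spectral projection of the transfer operator $L_\phi$ depend analytically on the H\"older potential $\phi$. This makes $P$, and all quantities derived from it --- equilibrium states, integrals against them, asymptotic variances --- smooth (analytic) functions of $\phi$, hence of $(\lambda, s, t)$. Since $\partial_s P(-sr + tf) = -\int r\, d\mu < 0$, the analytic implicit function theorem shows that $\beta(\lambda, t)$ is jointly smooth; differentiating finitely many times in $t$ at $t = 0$ and combining with the smoothly varying projection data yields the smooth dependence of $A$, $\sigma^2$ and $D$ on $\lambda$, and therefore on the metric.

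The main obstacle is the second step: producing a genuinely smooth (not merely continuous) family of symbolic models, i.e. choosing the Markov sections coherently along $g_\lambda$ and controlling the H\"older regularity of $r_{g_\lambda}$ uniformly so that $\lambda \mapsto r_{g_\lambda}$ lands in a fixed H\"older space and is differentiable there. This is handled by the standard combination of structural stability for Anosov flows with the device of absorbing the loss of regularity into a H\"older orbit equivalence and time reparametrisation; once this is in place, the remainder of the argument is soft.
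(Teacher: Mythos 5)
Your argument is correct in outline, but it takes a genuinely different route from the paper, and the step you yourself flag as ``the main obstacle'' is in fact the whole difficulty of that route. The paper's proof of Proposition \ref{constants} deliberately avoids the symbolic model at this stage: as indicated after the statement and carried out in Appendix I, the constants $A$, $\sigma^2$ and $D$ are finite algebraic combinations of finitely many partial derivatives, at the distinguished zero, of the two-variable zeta function $d(s,w)=\zeta(s,\log w)$, which is defined \emph{purely in terms of the closed orbits}. Since the word lengths $|\gamma|$ are topological and hence metric-independent, while each geodesic length $l(\gamma)$ varies smoothly with the metric (with uniform control coming from the Milnor comparison (1.2) and the counting estimates, so that the orbit sums and their derivatives converge locally uniformly in the metric), the implicit function theorem applied to $d$ gives smoothness of the constants directly. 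Your route instead fixes one symbolic model $(\Sigma,\sigma)$ across a family of metrics $g_\lambda$ and pushes all the metric dependence into the roof function $r_{g_\lambda}$; the constants then come from the pressure function and Ruelle's analytic perturbation theory. This works, and it is the standard mechanism behind smoothness of topological entropy, but the assertion that $\lambda\mapsto r_{g_\lambda}$ is a \emph{smooth} map into a fixed H\"older space is not a soft consequence of structural stability: the orbit equivalence supplied by structural stability is only H\"older, and upgrading the parameter dependence to $C^\infty$ requires the implicit-function-theorem-in-function-spaces arguments of de la Llave--Marco--Moriy\'on and Katok--Knieper--Pollicott--Weiss. As written, your proof defers precisely this point, which is the crux of your approach; the paper's formulation via $d(s,w)$ is designed to sidestep it, at the smaller cost of checking uniformity of the orbit sums in the metric. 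If you keep your route, you should either cite the KKPW-type smooth dependence result explicitly or switch to the paper's orbit-sum formulation for this proposition.
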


Each can be explicitly written in terms of the geodesic flow, via the associated zeta function and its derivatives (see Section $5.1$).
This leads to the conclusion of the above proposition.

We now briefly discuss some results related to this work.
In \cite{lalley} Lalley  proved a 
Local Limit Theorem, from which can be derived  a Central Limit Theorem,
comparing $l(\gamma)$ and 
$l_f(\gamma) = \int_\gamma f \ d\mu$, 
where $f : V \to \mathbb{R}$ is a H\"older function 
satisfiying a non-lattice condition.
and $\mu$ is the projection onto the surface 
of the
measure of maximal entropy for the geodesic flow.  
Recently,  Sharp and the first author provided a direct proof of a more general  Central Limit Theorem 
\cite{CS}. 

\indent

In sections 2-4, we present the proof of Theorem \ref{thm}.  In sections 5-6, we present the proofs 
Theorems \ref{asymvar}, \ref{clt} and \ref{llt}.
In Section $7$,  we compare Theorem \ref{clt} with the corresponding Central Limit Theorems considered in \cite{gtt}.
In particular, we established a   stronger Berry-Esseen error term.

To end this introduction we briefly describe the methods we use to prove our results. To prove Theorem \ref{thm} we study the domain of analyticity of a generating function of one variable. To do this we use tools from ergodic theory and more specifically, thermodynamic formalism. We then translate this `analytic information' into our results using techniques from 
analytic
number theory.  Theorem \ref{thm} is in some sense the 
starting point for the subsequent  more delicate  statistical results. To obtain 
these
theorems we need to work with a more complicated function that allows us to keep track of more information than is possible using
 a function of a single complex variable.
More specifically, we define and study a zeta function of two variables. Carrying out the analysis for this new function is much more difficult than analysing its one variable counterpart but, of course, this leads to significantly stronger results. 
In particular,  we
need to develop new methods that allow us to deal with the additional problems that arise due to the introduction of a second variable. 
%
Sections 5 and 6 are devoted to this  end.

In the next section we  begin by  introducing the essential tools in our analysis: complex functions and symbolic dynamics.

\section{A complex generating  function}

To prove Theorem \ref{asymav} we introduce the following complex  function.
 We can formally define
 	$$\eta_V(s) := \sum_{n=1}^\infty
	n
 	\sum_{|\gamma|=n}
 	e^{ - sl(\gamma)},\quad  s\in \mathbb C$$
where  the  sum over $\gamma$ is taken over all (not necessarily prime) 
directed closed geodesics.
We will show that $\eta_V(s) $ converges to an analytic function for $Re(s)>h$ and has a meromorphic extension to a larger domain.  Our approach is to associate
a Markov interval map to the geodesic flow.

  
  \subsection{Geodesics and Markov maps}
We begin by recalling some basic constructions. We can associate to our surface $V$ and its geodesic flow  $\phi_t:SV \to SV$ on the unit tangent bundle $SV$ the following:
\begin{enumerate}
\item a Markov piecewise $C^1$ expanding map $T: I \to I$ of the interval, and
\item a piecewise $C^1$ function $r: I \to \mathbb{R}_{>0}$.
\end{enumerate}

More precisely, we can replace $I$  by $\coprod_{i=1}^kI_i$, a disjoint union of intervals, and then for each $1\leq i \leq k$ we assume 
$T: I_i \to I$ is $C^1$  with $\inf_{x\in I_i} |T'(x)| > 1$ and the image $T(I_i)$ is a union of some of these intervals.  We similarly assume that $r: I_i \to \mathbb R$ is $C^1$, for $1 \leq i \leq k$.  

Furthermore, we can associate to the map $T: I\to I$ a subshift of finite type $\Sigma_A^+$ with symbols $\{1,\ldots, k\}$ and aperiodic zero-one transition matrix $A$:
$$\Sigma_A^+ = \{ (x_n)_{n=0}^\infty : x_n \in \{1,\ldots,k\}, A_{x_k,x_{k+1}}=1\}, \hspace{2mm} \text{ and } \hspace{2mm} \sigma(x_n)_{n=0}^\infty = (x_{n+1})_{n=0}^\infty.$$
There is a H\"older function $\tilde{r}: \Sigma_A^+ \to \mathbb{R}_{\ge 0} $ that models $r: I \to \mathbb{R}_{\ge 0}$ in this setting. Here we mean H\"older with respect to the $d_\theta$ metric (for some $0< \theta <1$): for $x= (x_n)_{n=0}^\infty,y = (y_n)_{n=0}^\infty\in \Sigma_A^+$, $d_\theta(x,y) = \theta^n,$ where $n$ is the largest integer such that $x_k = y_k$ for all $k \le n$. \\
\indent These systems provide us with a setting in which we can rewrite $\eta_V$ in terms of dynamical quantities. The following result allows us to bridge the gap between the geometry and these dynamical systems.  

  \begin{lemma} \label{codings}
 Given a compact surface $V$ of negative curvature there exists a $C^1$ Markov interval map $T$ and a $C^1$ function $r$ as above, such that
  there is a one-one correspondence between:
  	\begin{enumerate}
  		\item closed geodesics $\gamma$ of length $l(\gamma)$ and word length $|\gamma|$; and
  		\item closed orbits $\{x, T x, \cdots, T^{n-1}x\}$ in $I$ with $l(\gamma) = r^n(x) := \sum_{j=0}^{n-1} r(T^j x)$ and $|\gamma| = n$,
  	\end{enumerate}	
  	 with a finite number of exceptions. Furthermore, there is a one-one correspondence between:
  	 \begin{enumerate}
  		\item closed orbits $\{x, T x, \cdots, T^{n-1}x\}$ in $I$ with $l(\gamma) = r^n(x) := \sum_{j=0}^{n-1} r(T^j x)$ and $|\gamma| = n$; and
  		\item closed orbits $\{x, \sigma x, \cdots , \sigma^nx\}$ in $\Sigma_A^+$ with $l(\gamma) = \tilde{r}^n(x)$ and $|\gamma| = n$.
  	\end{enumerate}
  \end{lemma}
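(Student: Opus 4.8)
The plan is to prove Lemma \ref{codings} in two stages, corresponding to the two claimed one-one correspondences, and to assemble the coding from standard constructions in the theory of geodesic flows on negatively curved surfaces (after Bowen--Series \cite{}, and in the dynamical packaging of the boundary action of $\pi_1(V)$). First I would recall that the universal cover of $V$ is the hyperbolic-type disk with ideal boundary a circle, on which $\Gamma = \pi_1(V)$ acts as a convex cocompact (indeed cocompact) group of homeomorphisms. The Bowen--Series coding associates to the generating set $\Gamma$ a piecewise expanding Markov map $f$ of the boundary circle whose orbit equivalence relation is exactly the $\Gamma$-orbit equivalence relation, and whose symbolic partition is indexed by (sides of a fundamental domain associated to) the generators $a_i^{\pm 1}, b_i^{\pm 1}$. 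Passing to a subinterval $I = \coprod_i I_i$ transverse to this dynamics, one gets the Markov interval map $T\colon I\to I$ of item (1), with an aperiodic $0$--$1$ transition matrix $A$ once we pass to a power or refine the partition. The crucial point for us is that the map is set up so that a closed $T$-orbit $\{x,Tx,\dots,T^{n-1}x\}$ of period $n$ corresponds to a hyperbolic conjugacy class in $\Gamma$ whose \emph{cyclically reduced} word length in $\Gamma$ equals $n$; this is where the choice of the surface-group presentation and the Bowen--Series partition interlock, and it is the identity $|\gamma| = n$. A finite set of exceptional geodesics (those passing through the finitely many "cusp"/vertex points of the partition, or short exceptional conjugacy classes) must be discarded, accounting for the ``with a finite number of exceptions'' clause.

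Next I would install the length function. The flow direction is recovered by a roof function: there is a $C^1$ function $r\colon I_i\to\mathbb{R}_{>0}$ so that the geodesic flow $\phi_t$ on $SV$ is (up to finitely many orbits) the suspension of $T$ under $r$; concretely $r(x)$ is the time for the flow to return to the cross-section, which in the boundary picture is the Busemann/displacement cocycle $r(x) = B_x(\tilde o, \rho(x)\tilde o)$ for the generator $\rho(x)$ selected by the partition element containing $x$. Summing along a closed orbit, $r^n(x) = \sum_{j=0}^{n-1} r(T^j x)$ is the translation length of the corresponding deck transformation, which is precisely the length $l(\gamma)$ of the closed geodesic in the free homotopy class. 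This gives item (2) and the correspondence between (1) and (2). For the second correspondence, I would invoke the standard dictionary between a Markov interval map and its associated subshift of finite type: the itinerary map $\pi\colon\Sigma_A^+ \to I$ is a Hölder semiconjugacy, bijective off the (countable, orbit-finite) set of points hitting partition boundaries, and it carries closed $\sigma$-orbits of period $n$ bijectively to closed $T$-orbits of period $n$, again up to finitely many exceptions that can be absorbed. Pulling $r$ back gives the Hölder function $\tilde r = r\circ\pi$ with $\tilde r^n(x) = r^n(\pi x)$, and Hölder continuity with respect to $d_\theta$ follows from the uniform expansion $\inf|T'|>1$ together with $C^1$-ness of $r$ by the usual bounded-distortion argument.

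I expect the main obstacle to be the bookkeeping that makes $|\gamma|=n$ hold \emph{on the nose} rather than merely up to bounded multiplicative error as in Milnor's inequality (1.2). Getting word length to equal the symbolic period requires that the Bowen--Series map be adapted to exactly the chosen generating set, that its symbolic words be in bijection with cyclically reduced words representing conjugacy classes, and that no cancellation or ``backtracking'' occurs when one concatenates partition symbols along an orbit --- i.e.\ the transition matrix $A$ must forbid precisely the pairs $(s, s^{-1})$ and whatever additional relations the surface presentation forces near the vertices of the fundamental polygon. Verifying this is essentially the content of the Bowen--Series construction specialized to a surface group in standard form, so I would cite \cite{series} (and \cite{bowen-series}) for the construction and focus the written proof on (a) identifying the symbolic period with cyclically reduced word length, (b) checking that the roof sum recovers the geodesic length, and (c) enumerating and discarding the finitely many exceptional orbits. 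The aperiodicity of $A$, needed later for the spectral/thermodynamic arguments, is arranged by passing to a sufficiently high power or a Markov refinement of the partition, which does not affect any of the above identities.
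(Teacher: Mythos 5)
Your overall route is the same as the paper's: both proofs rest on the Bowen--Series coding adapted to the chosen symmetric generating set, the identification of the symbolic period with cyclically reduced word length, a return-time roof function recovering $l(\gamma)$, and the standard itinerary dictionary between the Markov interval map and $\Sigma_A^+$. You also correctly isolate the genuinely delicate point, namely that $|\gamma|=n$ must hold exactly and not merely up to the multiplicative constant of Milnor's inequality.

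There is, however, one genuine gap: you run the construction directly on the ideal boundary of the universal cover of the \emph{variable}-curvature surface, with roof function given by a Busemann cocycle $r(x)=B_x(\tilde o,\rho(x)\tilde o)$. In variable negative curvature the boundary action of $\pi_1(V)$ and the Busemann cocycle are in general only H\"older, so this produces at best a H\"older expanding map and a H\"older roof function --- not the $C^1$ Markov map $T$ and $C^1$ function $r$ that the lemma asserts and that the later Dolgopyat estimates (Lemma 3.8) require. The Bowen--Series construction proper is a constant-curvature construction (Fuchsian group acting by M\"obius transformations), and the paper's proof deals with the passage to variable curvature by structural stability: one builds the Markov sections for the constant-curvature flow, uses the connectedness of the space of negatively curved metrics to obtain an orbit-equivalence (a conjugacy up to time change) with the given flow, takes the images of the sections under this homeomorphism as the new sections, and then defines $T$ and $r$ as the genuine Poincar\'e return map and return time of the \emph{smooth} variable-curvature flow on these transferred sections; this is where the $C^1$ regularity comes from, while the combinatorics (hence the word-length identity $|\gamma|=n$) is inherited unchanged from the constant-curvature model. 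Your writeup should either incorporate this transfer step or otherwise justify $C^1$ regularity; as it stands the claimed smoothness of $T$ and $r$ is unsupported. A second, minor point: for the ``finite number of exceptions'' you should argue that only finitely many \emph{closed orbits} are miscounted (those lying entirely in the one-dimensional boundaries of the sections), which is the paper's dimension-count argument; the countable set of itinerary-boundary points you mention is not by itself enough.
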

  
  \begin{proof}
  The geodesic flow $\phi_t:SV \to SV$ is well known to be an Anosov flow.
  The coding of closed orbits for the Anosov geodesic flow on the three dimensional manifold corresponding to the unit tangent bundle dates back to Ratner \cite{ratner} and Bowen  \cite{bowen}.  The subshift of finite type models the (Markov) Poincar\'e return map on a finite number of local two dimensional sections and the roof function reflects the transition time between sections. 
  Moreover, because the boundaries of these sections is one dimensional this implies that there is a one-to-one correspondence between the closed orbits for the geodesic flow and the subshift of finite type, up to a finite number of exceptions.  
  However, the orginal Ratner-Bowen construction of the subshift doesn't necessarily reflect the word length of closed geodesics.  To accommodate this we require the more geometric coding of Bowen-Series
  \cite{bowenseries},  \cite{series} for the representation of the (constant curvature) geodesic flow on $(\partial \mathbb H^2 \times \partial \mathbb H^2 - \Delta)/\Gamma$ where $\partial \mathbb H^2$ is the boundary of hyperbolic space, $\Delta$ is the diagonal, and $\Gamma$ is the associated Fuchsian group.  Although this construction is performed in the setting of constant curvature surfaces, it carries over to variable curvature by using that the space of metrics is connected and taking the new sections to be the images of the original sections under the associated homeomorphism which is a conjugacy, up to flow equivalence.
  \end{proof}

 It will sometimes be more convenient to work with the derived
 symbolic model $\Sigma_A^+$  than the interval map $T:I\to I$. Given Lemma \ref{codings} we  can define
$$\eta(s) =  \sum_{n=1}^\infty 
 	 \sum_{\sigma^n x =x}
  e^{ - s\tilde{r}^n(x)}
  $$
 where the inner summation is over periodic points $\sigma^n x=  x \in \Sigma_A^+$.
Then $\eta_V(s) = \eta(s) + \Psi(s)$, where $\Psi(s)$ is an analytic  function on 
 the half-plane
 $Re(s) > 0$ correcting the contributions from the finite number of exceptions described in Lemma \ref{codings}.
   Hence the series  $\eta(s)$ converges provided 
 $$
  e^{P(-\mathrm{Re}(s)\tilde{r})} := \limsup_{n \to \infty}
 \left(\sum_{\sigma^nx=x} \exp( - \mathrm{Re}(s) \tilde{r}^n(x))\right)^{\frac{1}{n}} < 1
 $$

\noindent and the value $P(\cdot)$ has a useful alternative expression thanks to the variational principle: For a (H\"older) continuous function $g: \Sigma_A^+ \to \mathbb R$  we can associate the pressure
 $$
 P(g) = \sup 
 \left\{h(\mu) + \int g d\mu \hbox{ : } \mu = \sigma-\hbox{invariant probability}\right\}.
 $$
 Moreover, there is a unique $\mu_g$  such that $P(g) = h(\mu_g) + \int g d\mu_g$ called the {\it equilibrium state} for $g$.  In particular, the Bowen-Margulis measure of maximal entropy corresponds to the equilibrium state $\mu_{-h \tilde{r}}$, by a classical result of Bowen and Ruelle \cite{bowenruelle}.

In order to prove Theorem \ref{asymav} we need the   following proposition which describes the domain of analyticity of $\eta(s)$.

\begin{prop} \label{thmdom1}
There there exists $\epsilon > 0$ and $C\in \mathbb{R}$
  such that 
$$
\eta(s) = \frac{C}{s- h} + \chi_0(s)
$$
where 
\begin{enumerate}
	\item
 the function $\chi_0(s)$  is analytic on 
 $\mathrm{Re}(s)  > h-  \epsilon$  and 
\item there exists $\xi, M>0$ such that for $|\mathrm{Im}(s)|\ge 1$ we have $\left|\eta(s)\right|\le M|\mathrm{Im}(s)|^\xi$.
\end{enumerate}
Furthermore, by choosing $\epsilon>0$ sufficiently small we can take  $0< \xi <1$.
\end{prop}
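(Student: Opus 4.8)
The plan is to extract from the series $\eta(s)=\sum_{n\ge1}Z_n(s)$, where $Z_n(s):=\sum_{\sigma^n x=x}e^{-s\tilde r^n(x)}$, the contribution of the leading eigenvalue of the Ruelle transfer operator $(L_{-s\tilde r}w)(x)=\sum_{\sigma y=x}e^{-s\tilde r(y)}w(y)$, acting on the space of $d_\theta$-H\"older functions on $\Sigma_A^+$, where it is bounded and quasi-compact. By the variational principle the entropy $h$ is the unique zero of $t\mapsto P(-t\tilde r)$, so by the Ruelle--Perron--Frobenius theorem (using aperiodicity of $A$) the operator $L_{-s\tilde r}$ has, for $s$ in a complex neighbourhood $U$ of $h$, a simple leading eigenvalue $\lambda(s)$, analytic in $s$, with $\lambda(h)=1$, $\lambda'(h)=-\int\tilde r\,d\mu_{-h\tilde r}<0$, and the rest of its spectrum confined to a disc of radius $\rho_0<1$. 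Standard thermodynamic formalism --- for instance the meromorphic continuation of $z\mapsto\exp\bigl(\sum_{n\ge1}Z_n(s)z^n/n\bigr)$, whose poles are the reciprocals of the eigenvalues of $L_{-s\tilde r}$ --- then gives $Z_n(s)=\lambda(s)^n+O(n\rho_0^n)$ uniformly for $s\in U$, so that on $U$
$$\eta(s)=\frac{\lambda(s)}{1-\lambda(s)}+\sum_{n\ge1}\bigl(Z_n(s)-\lambda(s)^n\bigr),$$
with the series analytic on $U$. Since $1-\lambda(s)=-\lambda'(h)(s-h)\bigl(1+O(s-h)\bigr)$, this exhibits a simple pole at $s=h$ with residue $C=-1/\lambda'(h)=\bigl(\int\tilde r\,d\mu_{-h\tilde r}\bigr)^{-1}>0$, and $\chi_0(s):=\eta(s)-C/(s-h)$ is analytic near $h$.

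To continue $\chi_0$ to all of $\{\mathrm{Re}(s)>h-\epsilon\}$ I must rule out any further $s$ at which $1$ is an eigenvalue of $L_{-s\tilde r}$: away from $s=h$, $\eta(s)$ differs by an analytic function from the finite sum $\sum_{j:\,|\lambda_j(s)|\ge\rho}\lambda_j(s)/(1-\lambda_j(s))$, where $\rho<1$ is a cut-off above the (uniform) essential spectral radius and the sum is finite by quasi-compactness, so the only possible poles of $\eta$ in the strip are at solutions of $\lambda_j(s)=1$ (Ruelle resonances equal to $1$). For $s$ on the line $\mathrm{Re}(s)=h$ with $s\ne h$, the spectral radius of $L_{-s\tilde r}$ is strictly less than $1$ because $\tilde r$ is not cohomologous to a lattice-valued function --- equivalently the geodesic flow is weak-mixing --- and a compactness/continuity argument supplies a resonance-free neighbourhood of any bounded portion of that line. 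The genuinely hard step is to make the width of this strip uniform as $|\mathrm{Im}(s)|\to\infty$; for this I would invoke the Dolgopyat-type oscillatory-cancellation estimates for the geodesic flow on a negatively curved surface --- the analytic input underlying the exponential error term in (1.1) --- which yield a resonance-free strip $\{\mathrm{Re}(s)>h-\epsilon,\ |\mathrm{Im}(s)|\ge1\}$ together with polynomial bounds, in $|\mathrm{Im}(s)|$, on $(I-L_{-s\tilde r})^{-1}$ and hence on the finitely many relevant eigenvalues $\lambda_j(s)$.

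Granting this, part (2) follows by interpolation. On $\mathrm{Re}(s)\ge h+\epsilon$ the series for $\eta$ converges geometrically, so $|\eta(s)|=O(\epsilon^{-1})$ uniformly in $\mathrm{Im}(s)$; on a line $\mathrm{Re}(s)=h-\epsilon_0$ lying inside the resonance-free strip, the estimates above combined with the elementary bound $\|L_{-s\tilde r}\|_\theta=O(|\mathrm{Im}(s)|)$ give $|\eta(s)|=O(|\mathrm{Im}(s)|^{A})$ for some $A>0$; and since $\eta$ is analytic and of finite order in the strip between these lines (away from the pole at $s=h$, which has bounded imaginary part), a Phragm\'en--Lindel\"of (three-lines) argument bounds $\eta$ on $\{\,|\mathrm{Im}(s)|\ge1,\ \mathrm{Re}(s)\ge h-\epsilon\,\}$ by $M|\mathrm{Im}(s)|^{\xi}$ with an exponent $\xi$ that tends to $0$ as $\epsilon\to0$; in particular one may take $0<\xi<1$ by shrinking $\epsilon$, which is the final assertion. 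I expect the principal obstacle to be precisely the uniform-in-$\mathrm{Im}(s)$ resonance-free strip together with its accompanying polynomial resolvent bounds --- the point at which one must go beyond soft quasi-compactness and use Dolgopyat's method; the localisation of the pole and the computation of the residue $C$ are, by contrast, routine consequences of Ruelle--Perron--Frobenius theory.
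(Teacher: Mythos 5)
Your proposal follows essentially the same route as the paper: a leading-eigenvalue/pressure analysis near $s=h$ giving $\eta(s)=\lambda(s)/(1-\lambda(s))+(\text{analytic})$ with residue $C=1/\int\tilde r\,d\mu_{-h\tilde r}$, weak mixing plus a compactness argument for bounded $|\mathrm{Im}(s)|$, Dolgopyat's estimates for $|\mathrm{Im}(s)|\ge 1$, and Phragm\'en--Lindel\"of to bring $\xi$ below $1$. The only notable presentational difference is that the paper carries out the Dolgopyat step using the $C^1$ interval-map transfer operators $\mathcal L_s$ with the $\|\cdot\|_t$ norms (where those oscillatory-cancellation estimates actually live), rather than the symbolic operator on H\"older functions, but you correctly identify this as the external input being invoked.
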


The last comment is a simple consequence of the Phragm\'en Lingdel\"of principle.

\noindent The following section is dedicated to proving this result.

\section{Proof of Proposition 2.2 }

The approach to proving Proposition \ref{thmdom1} depends on the use of transfer operators.  First in the context of subshifts of finite type
and then in the setting of expanding maps,
 and in particular the results of Dolgopyat \cite{dolgopyat} that provide bounds on the spectral radii of the operators in the latter setting. 

\subsection{Transfer operators for shift spaces}
	
We recall the definitions of transfer operators for subshifts of finite type.
Let $F_\theta$ denote the collection of real valued H\"older continuous functions (with respect to $d_\theta$) on $\Sigma_A^+$. 

\begin{definition}
We can  define a family of  transfer operators $\widetilde{\mathcal L}_s : F_\theta \to F_\theta$ by
$$\widetilde{\mathcal L}_s w(x) = \sum_{\sigma y=x} e^{-s\tilde{r}(y)}w(y)$$
where $w\in F_\theta$.
\end{definition}
The following result is well known.
\begin{lemma}[Ruelle]\label{ruelle}
	For $\sigma \in \mathbb R$, $ \widetilde {\mathcal L}_\sigma $ has a real simple eigenvalue of maximum modulus which we denote by $\lambda(\sigma) = e^{P(-\sigma \tilde{r})}$ and a corresponding positive eigenfunction $f_\sigma$
such that $\widetilde{\mathcal L}_\sigma f_\sigma = e^{P(-\sigma \tilde{r})} f_\sigma$.
\end{lemma}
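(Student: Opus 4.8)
The statement is the Ruelle--Perron--Frobenius theorem for Hölder transfer operators, so the plan is the standard transfer-operator argument, stated only at the structural level. Fix $\sigma\in\mathbb R$ and write $\varphi=-\sigma\tilde r\in F_\theta$ and $\mathcal L=\widetilde{\mathcal L}_\sigma$. The first step is to produce the dual eigenmeasure: $\mathcal L^*$ acts on the weak-$*$ compact convex set of Borel probability measures on $\Sigma_A^+$ by $\mu\mapsto \mathcal L^*\mu/(\mathcal L^*\mu)(1)$, which is well defined and weak-$*$ continuous because $\mathcal L 1>0$; the Schauder--Tychonoff fixed point theorem then gives a probability measure $\nu$ and a number $\lambda>0$ with $\mathcal L^*\nu=\lambda\nu$, i.e.\ $\int \mathcal L w\,d\nu=\lambda\int w\,d\nu$ for all $w\in C(\Sigma_A^+)$.

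The geometric input is a bounded-distortion estimate: the inverse branches of $\sigma^n$ are $d_\theta$-contractions, so Hölder continuity of $\tilde r$ gives $|\varphi^n(x)-\varphi^n(y)|\le C$ whenever $x,y$ lie in a common $n$-cylinder (and more precisely $|\varphi^n(x)-\varphi^n(y)|\le C'\,d_\theta(x,y)$ in that case). Using this, let $\mathcal C\subset F_\theta$ be the cone of strictly positive $h$ with $h(x)/h(y)\le \exp(b\,d_\theta(x,y))$ for a suitably chosen $b$; the distortion bound shows $\mathcal L(\mathcal C)\subset\mathcal C$, and, invoking aperiodicity of $A$ so that $\sigma^N$ is everywhere onto for some $N$, a Birkhoff-type computation shows $\mathcal L^N(\mathcal C)$ has finite diameter in the Hilbert projective metric on $\mathcal C$. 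Hence $\mathcal L$ is eventually a strict contraction of this metric; completeness yields a fixed ray, i.e.\ a positive eigenfunction $f_\sigma\in\mathcal C$ with $\mathcal L f_\sigma=\lambda' f_\sigma$ for some $\lambda'>0$, and pairing $\mathcal L f_\sigma=\lambda' f_\sigma$ with $\nu$ forces $\lambda'=\lambda$. Normalise $\int f_\sigma\,d\nu=1$.

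For simplicity and maximality, conjugate: $\widehat{\mathcal L}g:=\lambda^{-1}f_\sigma^{-1}\mathcal L(f_\sigma g)$ satisfies $\widehat{\mathcal L}1=1$ and $\widehat{\mathcal L}^*m=m$ for $m:=f_\sigma\nu$. The contraction of the previous step descends to $\widehat{\mathcal L}$ and gives $\|\widehat{\mathcal L}^n g-\int g\,dm\|_\theta\le K\beta^n\|g\|_\theta$ with $\beta<1$ for all $g\in F_\theta$; undoing the conjugation, $\lambda^{-n}\mathcal L^n w\to f_\sigma\int w\,dm$ exponentially fast in $F_\theta$. Therefore $\lambda$ is an algebraically simple eigenvalue of $\mathcal L$ on $F_\theta$ with one-dimensional eigenspace $\mathbb R f_\sigma$, and the rest of the spectrum lies in a disc of radius $\beta\lambda<\lambda$, so $\lambda=\lambda(\sigma)$ is the unique eigenvalue of maximal modulus. (An equivalent route replaces the cone argument by a Doeblin--Fortet inequality $|\mathcal L^n w|_\theta\le A\theta^n|w|_\theta+B\|w\|_\infty$, which with Arzelà--Ascoli gives quasi-compactness via the Ionescu--Tulcea--Marinescu theorem, after which positivity of $\mathcal L$ and aperiodicity of $A$ pin down the peripheral spectrum.) Finally, to identify $\lambda(\sigma)=e^{P(-\sigma\tilde r)}$: the convergence above gives $\tfrac1n\log(\mathcal L^n 1)(x)\to\log\lambda(\sigma)$ uniformly in $x$, and since $(\mathcal L^n 1)(x)=\sum_{\sigma^n y=x}e^{\varphi^n(y)}$ is uniformly comparable to the periodic-orbit sum $\sum_{\sigma^n y=y}e^{\varphi^n(y)}$ by the bounded-distortion estimate, the exponential growth rate of the latter equals $\log\lambda(\sigma)$, which is precisely $P(-\sigma\tilde r)$ in the sense used in the excerpt (its agreement with the variational expression for pressure being the classical theorem already quoted).

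The main obstacle is the contraction/quasi-compactness step: proving that $\mathcal L$ is eventually a genuine contraction in the Hilbert projective metric on $\mathcal C$ (equivalently, the Doeblin--Fortet inequality together with triviality of the peripheral spectrum). Everything else --- the eigenmeasure, the passage from contraction to spectral gap, and the identification of the eigenvalue with the pressure --- is soft once the bounded-distortion estimate is available; the real work is the Birkhoff-cone computation and the use of aperiodicity of $A$ to obtain a uniform lower bound on $\mathcal L^N 1$.
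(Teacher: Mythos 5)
Your proposal is correct, but there is nothing in the paper to compare it against: the authors state this lemma as a classical, well-known result (the Ruelle Operator Theorem) and give no proof, deferring implicitly to the standard references they cite ([PP2], Ruelle's book). What you have written is a sound sketch of exactly that classical argument --- dual eigenmeasure via Schauder--Tychonoff, bounded distortion, Birkhoff-cone contraction in the Hilbert projective metric (or, as you note, the Doeblin--Fortet/Ionescu-Tulcea--Marinescu route to quasi-compactness), conjugation to the normalized operator to get the spectral gap and simplicity, and identification of the eigenvalue with the growth rate of the periodic-orbit sums, which is how $e^{P(-\sigma\tilde r)}$ is defined in the text. Two small points of care if you were to write it out in full: the cone ratio condition $h(x)/h(y)\le\exp\bigl(b\,d_\theta(x,y)\bigr)$ should be imposed only on pairs $x,y$ lying in a common $1$-cylinder (preimages of arbitrary pairs under different branches are not comparable), with aperiodicity of $A$ then used to get the finite projective diameter of $\mathcal L^N(\mathcal C)$; and the comparison between $(\mathcal L^n 1)(x)$ and the periodic-orbit sum needs aperiodicity as well as bounded distortion, to close each admissible $n$-string into a periodic orbit of comparable weight. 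Neither is a gap in the plan, just where the actual work sits.
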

Provided $s$ is real and close to $h$, let $Q(s)$ denote the one  projection onto the eigenspace corresponding to the simple maximal eigenvalue of $\widetilde{\mathcal L}_{s}$ acting on $C^1(I)$. We can then write
$$\widetilde{\mathcal L}_{s} = e^{P(-s\tilde{r})} Q(s) + P(s)$$
where $P(s) = I- Q(s)$.
We can further define the linear operators $\widetilde{\mathcal L}_{s,z} = e^z \widetilde{\mathcal L}_s$, i.e., by multiplying by $e^z$ where $z \in \mathbb C$. By the Ruelle Operator Theorem, 
 the operator $\widetilde {\mathcal L}_s$ has a simple maximal eigenvalue $e^{P(-s\tilde{r})}$ and uniform spectral gap to the rest of the spectrum.
By analytic perturbation theory we have the following.

\begin{lemma}[Perturbation theory] \label{pert}
	Provided $\delta>0$ is sufficiently small and $|s-h| < \delta$  
	then $\widetilde {\mathcal L}_{s}$ has a simple eigenvalue which we denote by $e^{P(-s\tilde{r})}$
	and $s \mapsto e^{P(-s\tilde{r})}$ varies analytically.
	There is also an associated  eigenfunction
	$f_s$  such that 
	$\widetilde {\mathcal L}_{s} f_{s} = e^{P(-s\tilde{r})}f_{s}$
	 and $s \mapsto f_{s}$ is analytic.
	 
	Therefore, provided $\delta>0$ is sufficiently small and $|s-h| < \delta$  
	then $\widetilde {\mathcal L}_{s,z}$ has the simple eigenvalue $e^{z}e^{P(-s\tilde{r})}$.
		There is also an associated  eigenfunction
		$f_{s,z} = e^zf_s$ such that 
		$\widetilde{\mathcal L}_{s,z} f_{s,z} = e^{P(z-s\tilde{r})}f_{s,z}$
		(where $f_{s,z} = f_{s}$)
		 and $s \mapsto f_{s,z}$ is therefore analytic.
\end{lemma}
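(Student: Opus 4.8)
The plan is to obtain the whole lemma from standard analytic (Kato--Rellich) perturbation theory applied to the family $s \mapsto \widetilde{\mathcal L}_s$ acting on the Banach space $F_\theta$, taking the Ruelle operator theorem (Lemma \ref{ruelle}) at the base point $s = h$ as the input, and then observing that the passage from $\widetilde{\mathcal L}_s$ to $\widetilde{\mathcal L}_{s,z} = e^z \widetilde{\mathcal L}_s$ is just multiplication by a scalar, which rescales the spectrum and leaves eigenspaces untouched.

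First I would check that $s \mapsto \widetilde{\mathcal L}_s$ is holomorphic as a map from a complex neighbourhood of $h$ into the space of bounded operators on $F_\theta$. For each local inverse branch of $\sigma$ the weight $e^{-s\tilde r(y)}$ is entire in $s$, and the usual distortion/bounded-variation estimates for $\tilde r$ show that the defining series $\widetilde{\mathcal L}_s w(x) = \sum_{\sigma y = x} e^{-s\tilde r(y)} w(y)$ converges in operator norm, locally uniformly in $s$. Term-by-term differentiation, justified by Cauchy estimates (or Morera's theorem applied to $s \mapsto \langle \ell, \widetilde{\mathcal L}_s w\rangle$), then gives analyticity, with $\frac{d}{ds}\widetilde{\mathcal L}_s w(x) = -\sum_{\sigma y = x}\tilde r(y) e^{-s\tilde r(y)} w(y)$.

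Next, at $s = h$ Lemma \ref{ruelle} provides a simple maximal eigenvalue $\lambda(h) = e^{P(-h\tilde r)}$ (in fact $=1$, since $\mu_{-h\tilde r}$ is the measure of maximal entropy, so $P(-h\tilde r) = 0$) together with a spectral gap: the rest of the spectrum of $\widetilde{\mathcal L}_h$ lies in a disc of radius strictly smaller than $\lambda(h)$. Because an isolated eigenvalue of finite multiplicity and its associated spectral projection depend analytically on an analytic perturbation, there is $\delta > 0$ so that for $|s - h| < \delta$ the operator $\widetilde{\mathcal L}_s$ has a unique eigenvalue $\lambda(s)$ in a fixed small disc about $\lambda(h)$; this $\lambda(s)$ stays simple, varies analytically in $s$, and the corresponding rank-one spectral projection $Q(s)$ is analytic in $s$. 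One then \emph{defines} $e^{P(-s\tilde r)} := \lambda(s)$, which for real $s$ near $h$ agrees with the quantity in Lemma \ref{ruelle} by uniqueness of analytic continuation. Fixing any $g \in F_\theta$ with $Q(h) g \neq 0$ (for instance $g = f_h$), the function $f_s := Q(s) g$ is nonzero once $\delta$ is small enough, satisfies $\widetilde{\mathcal L}_s f_s = \lambda(s) f_s$, and is analytic in $s$.

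Finally, for the two-variable family, multiplication by the scalar $e^z$ scales the entire spectrum by $e^z$ while preserving eigenspaces, so $\widetilde{\mathcal L}_{s,z} = e^z \widetilde{\mathcal L}_s$ has the simple eigenvalue $e^z \lambda(s)$ with the same eigenfunction; since eigenfunctions are only determined up to a scalar we may take $f_{s,z} = f_s$ (equivalently $f_{s,z} = e^z f_s$). As $P(z - s\tilde r) = z + P(-s\tilde r)$, this eigenvalue is precisely $e^{P(z - s\tilde r)}$, and joint analyticity in $(s,z)$ is immediate from analyticity in $s$ and the entire dependence on $z$. The only genuinely technical step is the operator-norm holomorphy of $s \mapsto \widetilde{\mathcal L}_s$ on $F_\theta$ — one must control the Hölder seminorm of the complex weights, not merely the sup norm — but once that is in hand the remainder is a direct citation of perturbation theory, so I do not anticipate a serious obstacle.
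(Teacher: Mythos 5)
Your proposal is correct and follows essentially the same route as the paper, which simply invokes the Ruelle Operator Theorem (simple maximal eigenvalue with spectral gap) together with standard analytic perturbation theory for isolated simple eigenvalues, and treats $\widetilde{\mathcal L}_{s,z}=e^z\widetilde{\mathcal L}_s$ as a scalar rescaling of the spectrum. Your additional care over the operator-norm holomorphy of $s\mapsto\widetilde{\mathcal L}_s$ on $F_\theta$ and the definition $e^{P(-s\tilde r)}:=\lambda(s)$ by analytic continuation is exactly the standard filling-in of the details the paper leaves implicit.
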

As a consequence, for $n \geq 1$, we can write
$$\widetilde{\mathcal L}_{s,z}^n = e^{nz}e^{nP(-s\tilde{r})} Q(s) + e^{nz}P(s).$$
where $Q$ and $P$ are analytic for $z$ in a neighbourhood of $0$ and $s$ in a neighbourhood of $h$.
All of these perturbation results have analogous statements for the $\widetilde{\mathcal L}_s$ operators.

\subsection{For $s$ close to $h$}


To study $\eta(s)$ for $s$ close to $h$ we begin with the following result.

\begin{lemma} [Theorem $5.5$ \cite{PP2}]
There exists $\epsilon, \delta >0$ and $0<\theta <1$ such that for $|z|< \delta$ and $|s-h|<\epsilon,$ 
$$e^{nz}  \sum_{\sigma^nx=x} e^{-s\tilde{r}^n(x)} = e^{nP(z-s\tilde{r})} + O(\theta^n),$$
uniformly in $s,z$.
\end{lemma}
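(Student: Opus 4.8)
The plan is to extract this from the spectral theory of the operators $\widetilde{\mathcal L}_s$ set up above, combined with the classical passage between periodic-orbit sums and traces of transfer operators. Since $z$ enters only as an additive constant in the potential, $\widetilde{\mathcal L}_{s,z}=e^{z}\widetilde{\mathcal L}_s$ has leading eigenvalue $e^{z}e^{P(-s\tilde r)}=e^{P(z-s\tilde r)}$ (consistently with Lemma \ref{pert}), so it suffices to prove
$$\sum_{\sigma^n x=x}e^{-s\tilde r^n(x)}=e^{nP(-s\tilde r)}+O(\theta_0^n)$$
uniformly for $s$ in a small complex disc $|s-h|<\epsilon$ and some fixed $0<\theta_0<1$; multiplying by $e^{nz}$ and using $|e^{nz}|\le e^{n\delta}$ for $|z|<\delta$ then gives the stated identity with the error exponent $e^{\delta}\theta_0$, which is $<1$ provided $\delta$ is chosen small enough. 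It is important here that $s$ stays close to $h$: since $h$ is characterised by $P(-h\tilde r)=0$, the main term $e^{nP(-s\tilde r)}$ has modulus $\approx 1$, so an error decaying geometrically really is lower order.

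First I would treat $s=h$. By Lemma \ref{ruelle}, $\widetilde{\mathcal L}_h$ acting on $F_\theta$ has a simple leading eigenvalue equal to $e^{P(-h\tilde r)}=1$; it is quasi-compact with essential spectral radius at most $\theta$ (Ruelle's estimate, which is where the geometric gain originates), and, using aperiodicity of $A$, $1$ is the only point of its spectrum on the unit circle. Hence the rest of the spectrum lies in a disc of radius $\rho(h)<1$; fix $\theta_0\in(\rho(h),1)$. Analytic perturbation theory (Lemma \ref{pert}) then yields, after shrinking $\epsilon$, a decomposition $\widetilde{\mathcal L}_s^{\,n}=\lambda(s)^n Q(s)+E_n(s)$, valid uniformly for $|s-h|<\epsilon$, where $\lambda(s)=e^{P(-s\tilde r)}$, $Q(s)$ is the rank-one spectral projection, and $\|E_n(s)\|\le C\theta_0^n$.

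The remaining, and genuinely delicate, point is to match $\sum_{\sigma^n x=x}e^{-s\tilde r^n(x)}$ with $\lambda(s)^n$ up to $O(\theta_0^n)$. Since $\widetilde{\mathcal L}_s$ is not trace-class on $F_\theta$, one cannot simply set this sum equal to $\mathrm{tr}(\widetilde{\mathcal L}_s^{\,n})$; instead one approximates $\tilde r$ by a function $\tilde r_N$ depending only on $x_0,\dots,x_N$ with $\|\tilde r-\tilde r_N\|_\infty\le C\theta^N$. On the $N$-block recoding of $\Sigma_A^+$ the transfer operator of $\tilde r_N$ is an honest non-negative matrix $M_{s,N}$, of size at most $k^{N+1}$, and $\mathrm{tr}(M_{s,N}^{\,n})=\sum_{\sigma^n x=x}e^{-s\tilde r_N^n(x)}$ holds exactly; its Perron eigenvalue $e^{P(-s\tilde r_N)}\to\lambda(s)$ and, for $N$ large, its other eigenvalues all have modulus at most some $\theta_1<1$ by spectral continuity. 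Summing the $n$-th powers of the eigenvalues of $M_{s,N}$, and comparing $\tilde r_N^n$ with $\tilde r^n$ on periodic points, produces an error of size $O\!\big(k^{N+1}\theta_1^n+n\,\theta^N e^{n\epsilon'}\big)$ with $\epsilon'\to 0$ as $\epsilon\to 0$. Taking $N=cn$ with $c>0$ small, and then $\epsilon$ small, makes both contributions decay geometrically, which is the required estimate (after renaming the resulting ratio $\theta_0$). All of this is carried out in \cite{PP2}; equivalently, one can phrase the same input as analyticity of the determinant $\exp\!\big(-\sum_{n\ge1}\frac{z^n}{n}\sum_{\sigma^n x=x}e^{-s\tilde r^n(x)}\big)$ on a disc of radius strictly larger than $\lambda(s)^{-1}$, with a simple zero at $z=\lambda(s)^{-1}$, and read the asymptotic off from this.

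The main obstacle is precisely this last translation. The transfer operator is not trace-class on the Hölder space, so the periodic-orbit sum can only be identified with an operator trace through the finite-range approximation, and the order $N$ of approximation, the uniform error $\|\tilde r-\tilde r_N\|_\infty$, and the dimension $k^{N+1}$ of the recoded matrix must be balanced simultaneously against $n$ — and, for our purposes, uniformly over the complex parameter $s$, which is what forces $\epsilon$ to be taken small. The remaining ingredients (the rank-one spectral decomposition, stability of the gap under perturbation, and the bookkeeping of the $e^{nz}$ prefactor) are routine.
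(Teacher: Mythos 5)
The paper does not actually prove this lemma: it is imported wholesale from \cite{PP2} (the ``Theorem 5.5'' in the header is the citation, not an internal reference), so there is no in-paper proof to match against, and the question is only whether your reconstruction is sound. It is, and it correctly isolates the two real issues: (i) the trivial reduction to $z=0$ by writing $\widetilde{\mathcal L}_{s,z}=e^z\widetilde{\mathcal L}_s$ and absorbing $|e^{nz}|\le e^{n\delta}$ into the error, which is only harmless because $P(-h\tilde r)=0$ keeps the main term of modulus $\approx 1$ and because one then shrinks $\delta$ so that $e^{\delta}\theta_0<1$; and (ii) the fact that $\widetilde{\mathcal L}_s$ is not trace class on $F_\theta$, so the periodic-orbit sum cannot simply be read off as $\mathrm{tr}(\widetilde{\mathcal L}_s^{\,n})$. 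Your resolution of (ii) — approximate $\tilde r$ by a function $\tilde r_N$ of $N+1$ coordinates, recode to a genuine $k^{N+1}$-dimensional matrix whose trace \emph{is} the periodic-orbit sum for $\tilde r_N$, and balance $N=cn$ against the dimension $k^{N+1}\theta_1^n$ and the approximation error $n\theta^N e^{Cn\epsilon}$ — is one of the two standard routes (essentially Ruelle's locally-constant/determinant approximation). The route taken in \cite{PP2}, and the one this paper itself uses in the analogous Lemma 3.8 via \cite{PS-error}, is instead to compare $\sum_{\sigma^nx=x}e^{-s\tilde r^n(x)}$ directly with $\sum_i(\widetilde{\mathcal L}_s^{\,n}\chi_i)(x_i)$ for finitely many reference points, exploiting that every $\sigma^n$-preimage of $x_i$ lying in a given $n$-cylinder is $d_\theta$-exponentially close to the periodic point of that cylinder; this avoids the recoding and the $k^{N+1}$ bookkeeping at the price of a slightly more delicate H\"older estimate, and then the spectral decomposition $\widetilde{\mathcal L}_s^{\,n}=e^{nP(-s\tilde r)}Q(s)+O(\theta_0^n)$, uniform in $|s-h|<\epsilon$ by analytic perturbation theory, finishes the argument exactly as in your sketch. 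Either way the conclusion and the uniformity in $(s,z)$ come out the same, so your proposal is acceptable as a proof outline for the cited result.
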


The proof of  Proposition \ref{thmdom1} depends on treating two different regions for $s$ differently. The first region  is where $s$ is close to $h$ .
The second is where  $s$ is far from $h$. We will study the behaviour of $\eta$ in these regions and then collate our results to prove Proposition $\ref{thmdom1}$.

To study the behaviour of $\eta(s)$ when $s$ is close to $h$, it is easier to take the symbolic view point and so in this section, we work with $\Sigma_A^+$ and $\tilde{r} : \Sigma_A^+ \to \mathbb{R}$.

\begin{lemma} 
	There exists $\alpha, \epsilon > 0$ so that 
	we can write 
	$$
\eta(s) =  \frac{ e^{P(-s\tilde{r})}}{1 - e^{P(-s\tilde{r})}} + \Psi_0(s)
	$$
	where $\Psi_0(s)$ is analytic for  $\mathrm{Re}(s) > h - \epsilon$, $|\mathrm{Im}(s)| \leq \alpha$. 
\end{lemma}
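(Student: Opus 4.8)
The plan is to expand $\eta(s)$ as a geometric series in the leading eigenvalue $e^{P(-s\tilde{r})}$ and to absorb the remainder into $\Psi_0$, using the preceding lemma (Theorem~$5.5$ of \cite{PP2}) specialised to $z=0$. For $\mathrm{Re}(s)>h$ the defining series converges, so setting
$$
E_n(s) := \sum_{\sigma^n x = x} e^{-s\tilde{r}^n(x)} - e^{nP(-s\tilde{r})}
$$
we may write $\eta(s) = \sum_{n=1}^\infty e^{nP(-s\tilde{r})} + \sum_{n=1}^\infty E_n(s)$. First I would note that $\big|\sum_{\sigma^n x = x} e^{-s\tilde{r}^n(x)}\big| \le \sum_{\sigma^n x = x} e^{-\mathrm{Re}(s)\tilde{r}^n(x)}$, so by the variational principle $\mathrm{Re}\,P(-s\tilde{r}) \le P(-\mathrm{Re}(s)\tilde{r})$, and the right side is strictly negative for $\mathrm{Re}(s)>h$ since $\sigma\mapsto P(-\sigma\tilde{r})$ is strictly decreasing with $P(-h\tilde{r})=0$. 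Hence $|e^{P(-s\tilde{r})}|<1$ there and $\sum_{n\ge1}e^{nP(-s\tilde{r})} = \frac{e^{P(-s\tilde{r})}}{1-e^{P(-s\tilde{r})}}$, which by Lemmas~\ref{ruelle} and \ref{pert} is meromorphic on a set of the form $\{\mathrm{Re}(s)>h-\epsilon,\ |\mathrm{Im}(s)|\le\alpha\}$ (the leading eigenvalue branch persisting for real $\sigma>h-\epsilon$ and extending off the real axis once $\alpha$ is small), with its only pole the simple one arising from the simple zero of $P(-s\tilde{r})$ at $s=h$; note $P(-s\tilde{r})$ can meet $2\pi i\mathbb Z$ only at $0$ inside such a thin strip.

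The crux is to show that $\Psi_0(s):=\sum_{n\ge1}E_n(s)$ extends to an analytic function on such a half-strip. By the cited lemma there are $\epsilon_0>0$ and $0<\theta<1$ with $|E_n(s)|\le C\theta^n$ uniformly for $|s-h|<\epsilon_0$, and each $E_n$ is analytic on that disc, since the finite sum over periodic points is entire in $s$ and $e^{nP(-s\tilde{r})}$ is analytic by Lemma~\ref{pert}. Thus $\sum_n E_n$ converges uniformly on compact subsets of $\{|s-h|<\epsilon_0\}$ and is analytic there, in particular across $\mathrm{Re}(s)=h$. Separately, for $\mathrm{Re}(s)>h$ we already know $\eta$ is analytic, so $\Psi_0 = \eta - \frac{e^{P(-s\tilde{r})}}{1-e^{P(-s\tilde{r})}}$ is analytic on $\{\mathrm{Re}(s)>h\}$ as well. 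Choosing $\epsilon,\alpha>0$ small enough that $\{\mathrm{Re}(s)>h-\epsilon,\ |\mathrm{Im}(s)|\le\alpha\}\subset\{|s-h|<\epsilon_0\}\cup\{\mathrm{Re}(s)>h\}$ (for instance $\epsilon,\alpha<\epsilon_0/\sqrt2$, also small relative to the spectral gap), we conclude $\Psi_0$ is analytic on the half-strip. The identity $\eta(s)=\frac{e^{P(-s\tilde{r})}}{1-e^{P(-s\tilde{r})}}+\Psi_0(s)$ holds on $\{\mathrm{Re}(s)>h\}$ by construction, hence on the whole half-strip by uniqueness of analytic continuation, and this is also what furnishes the meromorphic continuation of $\eta$ used later.

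I do not expect a genuine obstacle: the single piece of real analysis — the uniform geometric decay $|E_n(s)|\le C\theta^n$ — is exactly what the cited lemma of \cite{PP2} provides, itself a consequence of the Ruelle--Perron--Frobenius spectral gap of $\widetilde{\mathcal L}_s$ for $s$ near $h$, and the rest is bookkeeping of domains. The points needing a little care are verifying $\mathrm{Re}\,P(-s\tilde{r})<0$ for $\mathrm{Re}(s)>h$ (so the geometric series is legitimate) and choosing $\alpha$ small enough, both relative to $\epsilon_0$ and to the spectral gap, so that the leading-eigenvalue branch extends analytically across $|\mathrm{Im}(s)|\le\alpha$. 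The restriction $|\mathrm{Im}(s)|\le\alpha$ is essential rather than cosmetic: for $s$ with large imaginary part the asymptotic $\sum_{\sigma^n x=x}e^{-s\tilde{r}^n(x)}\approx e^{nP(-s\tilde{r})}$ fails, and controlling $\eta$ in that regime is precisely the role of the Dolgopyat-type estimates invoked in the following subsection.
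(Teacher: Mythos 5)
Your proposal is correct and follows essentially the same route as the paper: both decompose $\eta(s)$ via Lemma 3.4 (Theorem 5.5 of \cite{PP2}) into the geometric series $\sum_n e^{nP(-s\tilde r)}$ plus a uniformly geometrically small remainder, sum the series to $\frac{e^{P(-s\tilde r)}}{1-e^{P(-s\tilde r)}}$, and absorb the remainder into $\Psi_0$. The paper only sketches this, so your added verifications (that $\mathrm{Re}\,P(-s\tilde r)<0$ for $\mathrm{Re}(s)>h$, and the domain bookkeeping for $\epsilon,\alpha$) are welcome but not a departure.
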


\begin{proof}
We sketch a proof. By Lemma $3.4$, when $\textnormal{Re}(P(-s\tilde{r})) <0$, $\eta(s)$ is well defined, and
$$\eta(s) = 
 \sum_{n=1}^\infty e^{nP(-s\tilde{r})} + {\Psi}_0(s) $$
for ${\Psi}_0(s)$ that is analytic in  $Re(s) > h - \epsilon$.  Both sides of the above expression are analytic and non-zero when $\textnormal{Re}(P(-s\tilde{r})) <0$. Hence we can
write
$$ \eta(s) =  
\frac{e^{P(-s\tilde{r})}}{1-e^{P(-s\tilde{r})}} + \Psi_0(s). $$
 This provides the required analytic extension.
\end{proof}

It then follows easily that $\eta$ has a simple pole of residue $C$ (for some $C \in \mathbb{R}$) at $s=h$.

\subsection{Transfer operators for expanding interval maps}

The transfer operator for expanding interval maps is analogous to that for subshifts of finite type.

\medskip
\begin{definition}
Let $\mathcal L_{s}: C^1(I) \to C^1(I)$ be the family of transfer operators defined by
$$
\mathcal L_{s} w(x) = \sum_{Ty=x} e^{-sr(y)} w(y),
\quad \hbox{ for $w \in C^1(I)$},
$$
for $s \in \mathbb C$.
\end{definition}

All of the perturbation results in Lemma \ref{pert}  have analogous statements for the ${\mathcal L}_s$ operators.

\indent We now turn to the study of $\eta(s)$ for $s$ away from $h$. In this setting it is
better to use these  transfer operators
$\mathcal L_s$
 (instead of the previous operator $\widetilde{\mathcal L}_s$ for the  symbolic model) to exploit important features of the flow (i.e., non-joint integrability and the $C^1$ properties of the flow).  Fortunately, we are  able to adapt the necessary parts of \cite{PS-error} without too much difficulty.

 Given $|t| \geq 1$ we can consider the Banach space  $C^1(I)$
 with the norm
 $$
 \|h\|_t := \max \left\{
 \|h\|_\infty, \frac{\|h'\|}{|t|}
 \right\},
 $$   
and define transfer operators $\mathcal{L}_s : C^1(I) \to C^1(I)$ analogously to before. 
We then have the following key result.

\begin{lemma}[Dolgopyat \cite{dolgopyat}] \label{dolgo}
	There exists $\epsilon > 0$, $C > 0$ and $0 < \rho < 1$ such that whenever 
	$|t| \geq 1$ and $\mathrm{Re}(s) > h - \epsilon$ then 
	when $n = m [\log |t|] + l$, 
	where $0 \leq l \leq [\log |t|]-1$ and $m \geq 0$, then 
	$$
	\| \mathcal L_s^n\|  \le C \rho^{m [\log |t|]}e^{ l P(-\sigma r)}
	$$ 
	where $e^{P(\sigma)}$ is the spectral radius of $\mathcal L_s$ and $s = \sigma + it$.  
\end{lemma}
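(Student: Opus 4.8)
The plan is to follow the standard Dolgopyat argument for contact Anosov flows, adapting the estimates of \cite{dolgopyat} together with their refinement in \cite{PS-error} to the Bowen--Series expanding map $T:I\to I$ and the roof function $r$. First I would reduce to the single-block estimate. Write $s=\sigma+it$ with $\sigma$ in a fixed small interval around $h$ and $|t|\geq 1$, and set $n_0=[\log|t|]$. The assertion for general $n=mn_0+l$ follows by iterating $m$ times a bound of the form $\|\mathcal L_s^{n_0}w\|_t\leq C\rho^{n_0}\|w\|_t$, and then absorbing the remaining $l\leq n_0-1$ iterates through the trivial estimate $\|\mathcal L_s^l w\|_\infty\leq \|\mathcal L_\sigma^l|w|\|_\infty\leq C\,e^{lP(-\sigma r)}\|w\|_\infty$, which is precisely where the factor $e^{lP(-\sigma r)}$ in the statement comes from (here $e^{P(-\sigma r)}$ is the spectral radius of $\mathcal L_\sigma$ by Lemma~\ref{ruelle}).

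Second, I would record the Lasota--Yorke (Doeblin--Fortet) inequality for $\mathcal L_s$ on $(C^1(I),\|\cdot\|_t)$. Using that $T$ is piecewise $C^1$ and uniformly expanding, a routine computation of $(\mathcal L_s^n w)'$ gives, for some $\theta<1$, an inequality of the shape $\|\mathcal L_s^{n}w\|_t\leq A\theta^{n}\|w\|_t+A\,\|\mathcal L_\sigma^{n}|w|\|_\infty$, valid uniformly once $n\gtrsim\log|t|$. This shows that the $t$-norm of $\mathcal L_s^{n_0}w$ is controlled by the weighted sup-norm of $\mathcal L_\sigma^{n_0}|w|$, so the whole problem is reduced to showing a genuine contraction in an $L^\infty$- (or $L^2$-) type norm, i.e.\ that $\mathcal L_s^{n_0}$ cannot preserve the size of $|w|$.

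The heart of the argument --- and the step I expect to be the main obstacle --- is the Dolgopyat cancellation estimate. Here one uses the \emph{non-joint integrability} of the stable and unstable foliations of the geodesic flow (equivalently, that the flow is contact), which translates into a uniform non-integrability property of the temporal distance function built from $r$: along suitable pairs of inverse branches of $T^{n_0}$ the phases $t\,(r^{n_0}(y)-r^{n_0}(y'))$ differ by an amount bounded below uniformly in $t$. With this, one constructs as in \cite{dolgopyat} a finite family of ``dense sets of dense disks'' and an associated cone of positive $C^1$ functions which a normalised version of $\mathcal L_s^{n_0}$ maps strictly into itself, with an $L^2$-contraction by a factor $\rho^{n_0}$ for some $\rho<1$ uniform for $\sigma$ near $h$ and $|t|\geq1$. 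Converting this $L^2$ bound back into the $t$-norm (using the Lasota--Yorke inequality and the Markov structure of $T$) yields $\|\mathcal L_s^{n_0}w\|_t\leq C\rho^{n_0}\|w\|_t$.

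Finally I would assemble the pieces: iterating the single-block estimate over the $m$ full blocks of length $n_0$ and absorbing the residual $l$ iterates as above gives $\|\mathcal L_s^{mn_0+l}\|\leq C\rho^{mn_0}e^{lP(-\sigma r)}$, which is the claim of Lemma~\ref{dolgo}. The one genuinely delicate point is checking that the uniform non-integrability hypothesis really holds for the Bowen--Series coding in \emph{variable} negative curvature, not just constant curvature; I would handle this exactly as in the proof of Lemma~\ref{codings}, transporting the constant-curvature sections through the conjugating homeomorphism and using that non-joint integrability is an open condition stable under the flow equivalence involved.
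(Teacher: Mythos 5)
The paper offers no proof of this lemma: it is imported wholesale from Dolgopyat \cite{dolgopyat}, in the form adapted to expanding interval maps with a $C^1$ roof function in \cite{PS-error}, and the surrounding text only remarks that ``we are able to adapt the necessary parts of \cite{PS-error} without too much difficulty.'' Your sketch correctly reproduces the architecture of that cited proof --- the block decomposition $n=m[\log|t|]+l$ with the trivial bound $\|\mathcal L_\sigma^l\|_\infty\le Ce^{lP(-\sigma r)}$ supplying the second factor, a Lasota--Yorke inequality on $(C^1(I),\|\cdot\|_t)$, and the cancellation estimate driven by uniform non-integrability --- so at the level of strategy you are doing exactly what the reference does, and nothing you write would fail.

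One point deserves correction, though. You flag as the ``genuinely delicate point'' the verification of non-joint integrability in variable curvature, and propose to transport it from the constant-curvature model through the conjugating homeomorphism of Lemma \ref{codings}, arguing that non-integrability is stable under the flow equivalence. This is both unnecessary and, as an argument, unreliable: the geodesic flow of \emph{any} metric of negative curvature is a contact Anosov flow, so its strong stable and unstable foliations are automatically not jointly integrable --- no transport from constant curvature is needed, and this is precisely the setting of \cite{dolgopyat} and \cite{PS-error}. The conjugacy-up-to-flow-equivalence in Lemma \ref{codings} is invoked only so that the Markov sections record word length; a time change need not preserve the contact structure, so if non-integrability genuinely had to be carried across that equivalence your stability claim would require justification. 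The real delicacy, which your sketch defers to \cite{dolgopyat}, is rather that the uniform non-integrability of the flow descends to a uniform lower bound on the oscillation of the temporal distance function expressed through $r$ and the inverse branches of $T$, uniformly over the Markov partition; that is the content of the oscillation estimates in \cite{PS-error} and is where the $C^1$ (rather than merely H\"older) regularity of $T$ and $r$ is used.
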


In particular, we see that for  $|t| \geq 1$ and $\mathrm{Re}(s) > h - \epsilon$
the operator $\mathcal L_s$ has spectral radius at most $0<\rho < 1$. 

\subsection{The region $|\mathrm{Im}(s)| \geq 1$}
We require the following result 
(corresponding to Lemma 2 in \cite{PS-error}).   Let $I = \coprod_{i=1}^k I_i$ be the union of disjoint intervals on which $T,r$ are $C^1$. Write $\chi_i : I \to \mathbb{R}$ for the indicator function on $I_i$.

\begin{lemma} \label{ruellecomp}
	For each $I_i$, we can take $x_i \in I_i$ such that there exists $C>0$ and  $0 < \rho_0<1$ independent of $I_i$ with
	$$
	\left|\sum_{T^nx=x} e^{-sr^n(x)} - \sum_{i=1}^k \mathcal L_{s}^n \chi_{i}(x_i)\right|
	\leq C |t|  n \rho_0^n.
	$$
\end{lemma}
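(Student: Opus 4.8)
The plan is to relate the sum over genuine periodic points $T^n x = x$ to the diagonal sum $\sum_i \mathcal{L}_s^n \chi_i(x_i)$ by tracking the discrepancy between the true weight $e^{-sr^n(x)}$ of a periodic orbit and the evaluated iterate of the transfer operator applied to an indicator. First I would unwind the definition of $\mathcal{L}_s^n$: iterating the transfer operator gives $\mathcal{L}_s^n \chi_i(x_i) = \sum_{T^n y = x_i} e^{-s r^n(y)} \chi_i(y)$, where the sum ranges over the $n$-th preimages $y$ of $x_i$ lying in $I_i$. Each such preimage corresponds to an admissible word $(i_0, i_1, \dots, i_{n-1})$ with $i_0 = i$ and $A_{i_j i_{j+1}} = 1$ (and the cylinder closing up, $A_{i_{n-1} i} = 1$), and within that cylinder there is exactly one periodic point $x$ with $T^n x = x$. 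The difference between the two sides is therefore, termwise over admissible closed words, the difference $e^{-s r^n(y)} - e^{-s r^n(x)}$ where $y$ is the preimage of the marked point $x_i$ and $x$ is the periodic point in the same length-$n$ cylinder.

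The key step is then to estimate $|e^{-s r^n(y)} - e^{-s r^n(x)}|$. Writing $s = \sigma + it$ with $\sigma > h - \epsilon$, factor as
$$
\bigl| e^{-s r^n(y)} - e^{-s r^n(x)} \bigr| = e^{-\sigma r^n(y)} \bigl| 1 - e^{-s(r^n(x) - r^n(y))} \bigr|.
$$
Since $T$ is uniformly expanding ($\inf |T'| > 1$), the points $y$ and $x$ in a common length-$n$ cylinder satisfy $|T^j y - T^j x| \leq C \kappa^{n-j}$ for some $0 < \kappa < 1$; combined with the $C^1$ (hence Lipschitz) bound on $r$ on each $I_i$, this gives $|r^n(x) - r^n(y)| \leq \sum_{j=0}^{n-1} |r(T^j x) - r(T^j y)| \leq C' \sum_{j=0}^{n-1} \kappa^{n-j} \leq C''$, a uniform bound independent of $n$ — and in fact the bound is $O(\kappa^{\min(j, n-j)})$ summed appropriately, which is $O(1)$. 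Using $|1 - e^{-sw}| \leq |s| \, |w| \, e^{|\sigma| |w|} \leq C |t| \, |w|$ when $|w|$ is bounded (and $|t| \geq 1$, $\sigma$ bounded), together with the bound $|r^n(x) - r^n(y)| = O(1)$, would only give a bound proportional to $|t|$, not $|t| n \rho_0^n$. To recover the decaying factor, I would instead note that $e^{-\sigma r^n(y)}$ is itself exponentially small: since $r > 0$ is bounded below, $\sum_{T^n y = x_i} e^{-\sigma r^n(y)} = \mathcal{L}_\sigma^n \chi_i(x_i) = O(\rho_1^n)$ for some $0 < \rho_1 < 1$ when $\sigma > h - \epsilon$ (the spectral radius bound, from Lemma \ref{dolgo} or directly from $P(-\sigma r) < 0$). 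Summing $|1 - e^{-s(r^n(x)-r^n(y))}| \leq C|t|$ against these weights, and over the finitely many marked points $x_i$, yields the claimed bound $C|t| \, n \, \rho_0^n$, with the extra factor $n$ absorbing the slack between $\rho_1$ and $\rho_0$ (or arising from a cruder count of cylinders).

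The main obstacle I anticipate is the bookkeeping needed to make the cylinder-to-periodic-point correspondence precise: one must check that for $n$ large, every admissible closed word of length $n$ contains exactly one periodic point of $T^n$ and exactly one preimage of each marked point $x_i$, and that the marked points $x_i$ can be chosen (e.g. in the interiors of the $I_i$, away from the finitely many discontinuity points and their preimages) so that preimages stay in the correct subintervals. This is where the "finite number of exceptions" and the precise $C^1$-Markov structure from Lemma \ref{codings} must be invoked carefully. A secondary technical point is getting the dependence on $|t|$ exactly linear: this requires the crude bound $|1-e^{z}| \le |z|e^{|z|}$ applied with $|z| = |s|\,|r^n(x)-r^n(y)| = O(|t|)$, which is where the single power of $|t|$ enters, and ensuring that the $C^1$-norm estimates on $r$ and on the branches of $T^{-n}$ do not secretly introduce further $|t|$-dependence. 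Once these are in place, the estimate follows by summing a geometric-type series, mirroring Lemma 2 of \cite{PS-error}.
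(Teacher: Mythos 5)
The paper offers no argument for this lemma beyond the citation ``See Lemma 2 of \cite{PS-error}'', so the comparison is really between your reconstruction and the proof in that reference. Your set-up is the right one: expanding $\mathcal{L}_s^n\chi_i(x_i)$ as a sum over $n$-th preimages of $x_i$, matching each preimage bijectively with the unique periodic point of $T^n$ in the same length-$n$ cylinder, and reducing to a termwise comparison of $e^{-sr^n(y_w)}$ with $e^{-sr^n(x_w)}$. Your observation that $|r^n(x_w)-r^n(y_w)|=O(1)$ and that the naive termwise bound therefore only yields $O(|t|)$ is also correct.

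The gap is in the step you invoke to recover the factor $\rho_0^n$. You assert that $\sum_{T^ny=x_i}e^{-\sigma r^n(y)}=\mathcal{L}_\sigma^n\chi_i(x_i)=O(\rho_1^n)$ for $\sigma>h-\epsilon$, ``from Lemma \ref{dolgo} or directly from $P(-\sigma r)<0$''. This is false precisely in the range where the lemma is used: display (3.1) applies it for $h-\epsilon<h'<\mathrm{Re}(s)$, so in particular for $\sigma\le h$. By the Ruelle operator theorem $\mathcal{L}_\sigma^n\chi_{I_i}(x_i)\sim e^{nP(-\sigma r)}\nu_\sigma(I_i)h_\sigma(x_i)$ with $\nu_\sigma(I_i)h_\sigma(x_i)>0$, and since $P(-hr)=0$ and $s\mapsto P(-sr)$ is decreasing, this quantity is bounded away from zero (indeed grows) for $\sigma\le h$; the lower bound on $r$ does not help because the number of closed admissible words of length $n$ grows exponentially. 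Nor can Lemma \ref{dolgo} be used here: it bounds the norm of the complex operator $\mathcal{L}_s$, whose decay comes from cancellation in the oscillating factor $e^{-itr}$, whereas your estimate has already passed to absolute values, and $\sum_y|e^{-sr^n(y)}|$ is exactly $\mathcal{L}_\sigma^n\chi_i(x_i)$. So your argument delivers only $O(|t|\,e^{nP(-\sigma r)})$, which does not decay for $\sigma\le h$ and in particular would not make the sum over $n$ in (3.1) converge on $\mathrm{Re}(s)<h$. The genuine content of Lemma 2 of \cite{PS-error} is how to beat this: the difference is not estimated term by term in modulus, but is rewritten (by telescoping over the depth $m$ of the cylinder approximation) as a sum of roughly $n$ terms, each of which is an iterate of the transfer operator applied to a function of exponentially small ($O(|t|\theta^m)$) norm, so that operator bounds for complex $s$ --- rather than positivity --- can be brought to bear; the resulting double geometric sum is what produces $n\rho_0^n$. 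Without an ingredient of this kind the exponential decay in $n$ cannot be obtained.
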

\begin{proof}
See Lemma $2$ of \cite{PS-error}.
\end{proof}
 An immediate consequence of this and Lemma \ref{dolgo} is that $\eta(s)$ is analytic and non-zero in the domain $\mathrm{Re}(s)>h-\epsilon, |\mathrm{Im}(s)|\ge 1$.  We now recall that, by the weak-mixing properties of the geodesic flow, $\eta(s)$ cannot have any poles on the line $\mathrm{Re}(s)=h$ \cite{PP2}. Using a simple compactness argument it follows that, after possibly reducing $\epsilon$, 
 $\eta(s)$ is analytic and non-zero on $\mathrm{Re}(s)>h-\epsilon$ and $|\mathrm{Im}(s)|\ge \alpha$. 
 This domain of analyticity passes to the logarithmic derivative.\\ \indent

Now, using Lemma \ref{ruellecomp} we can bound, for some $C'>0$,
$$
\begin{aligned}
\left|\sum_{T^nx=x} e^{-sr^n(x)}\right|
& \leq 
	\left|\sum_{T^nx=x} e^{-sr^n(x)} - \sum_{i=1}^k \mathcal L_{s}^n \chi_{I_i}(x_i)\right| + \left|\sum_{i=1}^k \mathcal L_{s}^n \chi_{i}(x_i)\right| \cr
	&\leq C |t|  n \rho_0^n + \sum_{i=1}^k \|\mathcal L_{s}^n\|\cr
	&\leq  C |t|  n \rho_0^n + k C' \rho^{m [\log |t|]} e^{ l P(- \sigma r)}.
\end{aligned}\eqno(3.1)
$$
Taking $h'$ with $h-\epsilon < h' < h$ , we consider $\eta(s)$ for
$\mathrm{Re}(s) > h'$. By $(3.1)$, there exists constants $C_1, C_2 >0$ independent of $z,s$ such that
$$
\begin{aligned}
\left|\eta(s) \right| &\leq C_1 \sum_{n=1}^\infty 
 |t|   \rho_0^n
+ kC_2 \sum_{m=0}^\infty 
\rho^{m[\log |t|]}
\left(\sum_{l=0}^{[\log |t|]-1}e^{lP(-\sigma r)}\right) \cr
&\leq \frac{C_1 \rho_0}{1-\rho_0 }|t|
+ kC_2 \left(
\frac{1}{1 - |t|^{-|\log \rho|}}
\right) \left( \frac{e^{[\log|t|] } e^{[\log |t|] P(- h' r)}-1}{e^{P(- h' r)} -1} \right)
\end{aligned}
$$
This implies the required bound for Proposition \ref{thmdom1}.

\section{Proof of Theorem 1.2}
We begin the proof of Theorem \ref{asymav} by introducing some classical techniques  used in  analytic number theory.

\subsection{The Perron Formula}
We use the following standard result from complex analysis.
\begin{lemma} [The uniform Effective Perron Formula (\cite{tenenbaum}, Section $\textnormal{II}.2$, Theorem $2$)]\label{perron}
Define
$$  \theta(y) = \left\{
     \begin{array}{@{}l@{\thinspace}l}
       0&\hspace{4pt} 0<y<1\\
       1/2&\hspace{4pt} y=1 \\
       1&\hspace{4pt} y>1.\\
     \end{array}
   \right.$$
Then for any fixed $ \xi >0$ there exists $K>0$ such that,
$$\left|\frac{1}{2\pi i} \int_{d-iR}^{d+iR} \frac{y^s}{s} ds - \theta(y)\right| \le \frac{K y^d}{1+R|\log(y)|},$$
 uniformly for $y>0$, $R\ge 1$, $d<\xi$.
 \end{lemma}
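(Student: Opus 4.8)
The statement to prove is a standard quantitative form of the Perron/Mellin inversion formula, and I would prove it by a direct contour-integral estimate rather than citing Tenenbaum, so that the note is self-contained. The key identity is that for $y > 0$ the function $\frac{y^s}{s}$ is meromorphic with a single simple pole at $s = 0$ of residue $1$, and the value $\theta(y)$ is exactly what one recovers by closing the vertical contour in the appropriate direction: to the left when $y > 1$ (the exponential $y^{\mathrm{Re}(s)} = e^{\mathrm{Re}(s)\log y}$ decays as $\mathrm{Re}(s) \to -\infty$), to the right when $0 < y < 1$, and the boundary case $y = 1$ gives $\tfrac12$ by symmetry of the principal value. So the plan is: fix $y$, and bound the difference between the truncated vertical integral $\frac{1}{2\pi i}\int_{d-iR}^{d+iR}$ and the corresponding integral over an infinite vertical line, then over the infinite line evaluate exactly by residues.

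\textbf{Step 1: the case $y \neq 1$.} I would complete the contour $[d-iR, d+iR]$ to a large rectangle (or, more cleanly, to a horizontal ray plus the full infinite line): for $y>1$ close with the segments $\{ \mathrm{Re}(s) = -X,\ |\mathrm{Im}(s)| \le R\}$ and the two horizontals $\{\mathrm{Im}(s) = \pm R,\ -X \le \mathrm{Re}(s) \le d\}$, let $X \to \infty$. The left vertical piece contributes $O(y^{-X}/R) \to 0$ since $d < \xi$ is bounded. On the horizontal segments $|s| \ge R$, so $|y^s/s| \le y^{\sigma}/R$, and $\int_{-\infty}^{d} y^\sigma\, d\sigma = y^d/\log y$ (for $y>1$); this gives a contribution $O\!\big(\tfrac{y^d}{R\log y}\big)$. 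By the residue theorem the closed contour integral equals $1 = \theta(y)$, so the truncated integral differs from $\theta(y)$ by $O\!\big(\tfrac{y^d}{R|\log y|}\big)$. The case $0<y<1$ is symmetric: close to the right, pick up no pole, get $0 = \theta(y)$, and the same horizontal bound gives the error $O\!\big(\tfrac{y^d}{R|\log y|}\big)$ (now using $\int_{d}^{\infty} y^\sigma\, d\sigma = -y^d/\log y$, and $|\log y| = -\log y$).

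\textbf{Step 2: the case $y = 1$, and uniformity.} When $y=1$ the integrand is $\frac{1}{s}$ and $\frac{1}{2\pi i}\int_{d-iR}^{d+iR} \frac{ds}{s} = \frac{1}{2\pi i}\log\frac{d+iR}{d-iR} = \frac{1}{\pi}\arctan(R/d) \to \tfrac12$ as $R \to \infty$, with $\big|\frac{1}{\pi}\arctan(R/d) - \tfrac12\big| = O(1/R) = O\!\big(\tfrac{y^d}{1 + R|\log y|}\big)$ since the denominator is $1 + 0$. Finally I would combine the three cases into the single bound $\frac{Ky^d}{1+R|\log y|}$: away from $y=1$ the $1$ in the denominator is harmless (absorbed into $K$ once $R|\log y|$ is bounded below, and the bound from Step 1 already handles $R|\log y|$ large), and continuity in $y$ near $1$ together with the explicit $y=1$ computation patches the transitional range. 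The constant $K$ depends only on $\xi$ through the requirement $d < \xi$ (used to keep $y^d$ controlled, and in the case $y>1$ one actually wants $d$ bounded below as well, but only $d<\xi$ is asserted because the left-closing argument needs no lower bound on $d$).

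\textbf{Main obstacle.} The only genuinely delicate point is merging the estimates into the \emph{single} clean denominator $1 + R|\log y|$ uniformly across all three regimes — in particular handling $y$ close to $1$ (where $|\log y|$ is tiny and the naive rectangle bound $y^d/(R|\log y|)$ blows up) by instead using a cruder bound of the form $O(1/R)$ valid there, and checking the two bounds overlap. Since the statement is quoted verbatim from Tenenbaum with a precise reference, in the note itself I would simply cite that reference and omit the re-derivation; the sketch above is what one would write if a self-contained proof were wanted.
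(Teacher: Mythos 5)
The paper offers no proof here: it quotes the result verbatim from Tenenbaum (Section II.2, Theorem 2) and merely remarks that the uniformity in $d<\xi$ follows from inspecting that proof. Your sketch is essentially Tenenbaum's own argument (close the contour left or right, residue at $s=0$, horizontal segments giving the $y^d/(R|\log y|)$ bound), so in spirit you are reproducing the cited proof rather than taking a new route. Two points in your sketch are, however, not right as written. First, in the transitional range $y$ near $1$ (with $y\neq 1$) you claim a ``cruder bound of the form $O(1/R)$'': no such bound can hold uniformly there, since $\theta$ jumps at $y=1$ while the truncated integral is continuous in $y$; what the right-hand side $Ky^{d}/(1+R|\log y|)$ actually demands in that regime is only $O(y^{d})$, and the standard way to get it is to replace the segment $[d-iR,d+iR]$ by the arc of the circle $|s|=\sqrt{d^{2}+R^{2}}$ on the appropriate side, where $|y^{s}/s|\le y^{d}/\sqrt{d^{2}+R^{2}}$ and the arc length is $O(\sqrt{d^{2}+R^{2}})$. ``Continuity patches the transitional range'' is not a quantitative argument and is exactly the point that needs the arc estimate.

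Second, your parenthetical claim that ``the left-closing argument needs no lower bound on $d$'' is false, and the statement as printed in the paper is in fact slightly too generous: if $d\le 0$ the pole at $s=0$ lies on the wrong side of the line of integration, the full-line integral equals $\theta(y)-1$ rather than $\theta(y)$ for $y>1$ (and $\theta(y)+1$-type discrepancies occur for $y<1$), and the inequality fails (e.g.\ $y=e$, $d=-100$, $R=1$ makes the left side close to $1$ and the right side exponentially small). The hypothesis should be read as $0<d<\xi$, which is what Tenenbaum assumes and all that is ever used in the paper, where $d$ is always of the form $h+T^{-1}$ or $\sigma(\mathrm{Re}(z))+T^{-1}$ and hence bounded away from $0$. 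With the arc bound supplied and the positivity of $d$ made explicit, your outline is the correct and standard proof.
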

The statement of Perron's Formula in \cite{tenenbaum} does not have uniformity in the variable $d <\xi$, however this follows easily from an inspection of the proof.
From Lemma \ref{perron}
and Proposition  \ref{thmdom1}
we can  deduce the following lemma. The notation $\sum'_{l(\gamma') \le T}$ is used to express the normal summation except that it weights elements corresponding to $l(\gamma')=T$ by a half.

\begin{lemma} \label{qp1}
There exists  $\alpha >0$ such that
$$\sideset{}{'}\sum_{l(\gamma') \le T}
|\gamma'| = \frac{C}{h} e^{hT}  \left( 1 + O\left(e^{-\alpha T} \right) \right)$$
where $C$ is as in Proposition \ref{thmdom1} as $T\to\infty$.
\end{lemma}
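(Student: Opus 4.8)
The plan is to recover the summatory function $\sideset{}{'}\sum_{l(\gamma)\le T}|\gamma|$ from the generating function $\eta_V(s)=\sum_\gamma|\gamma|e^{-sl(\gamma)}$ by contour integration, feeding in the analytic information of Proposition~\ref{thmdom1}. Writing $\eta_V=\eta+\Psi$, this function is analytic for $\mathrm{Re}(s)>h$, has a meromorphic extension to $\mathrm{Re}(s)>h-\epsilon$ (shrink $\epsilon$ so that $h-\epsilon>0$) whose only singularity there is a simple pole at $s=h$ with residue $C$, and satisfies $|\eta_V(\sigma+it)|\ll|t|^\xi$ for $|t|\ge 1$ with $0<\xi<1$ ($\Psi$ being a finite combination of terms $|\gamma|e^{-sl(\gamma)}$, hence bounded on the relevant half-plane). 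First I would apply the effective Perron formula (Lemma~\ref{perron}) with $y=e^{T-l(\gamma)}$, multiply by $|\gamma|$ and sum over $\gamma$; with the abscissa $d$ taken just to the right of $h$ the interchange of sum and integral is justified by absolute convergence, yielding
$$
\sideset{}{'}\sum_{l(\gamma)\le T}|\gamma| = \frac{1}{2\pi i}\int_{d-iR}^{d+iR}\eta_V(s)\frac{e^{sT}}{s}\,ds + O\!\left(e^{dT}\sum_\gamma|\gamma|\frac{e^{-dl(\gamma)}}{1+R|T-l(\gamma)|}\right).
$$

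Next I would bound the Perron error by splitting the $\gamma$-sum according to whether $|l(\gamma)-T|$ is at most or larger than a small $\delta=\delta(T)$: the ``far'' part is $\ll(R\delta)^{-1}e^{dT}\eta_V(d)$, while for the ``near'' part $\sum_{|l(\gamma)-T|\le\delta}|\gamma|\ll T\big(\pi(T+\delta)-\pi(T-\delta)\big)$ (using $|\gamma|\le C\,l(\gamma)$), which the prime geodesic theorem $(1.1)$ bounds by $\ll\delta e^{hT}+Te^{(h-\epsilon_0)T}$. Taking $d=h+1/T$ (so $e^{dT}\asymp e^{hT}$ and $\eta_V(d)$ is of size $\asymp T$, since the pole is simple), then $\delta=e^{-\alpha_0 T}$ and $R=e^{2\alpha_0 T}$ for a suitably small $\alpha_0>0$, makes the whole Perron error $O(e^{(h-\beta_1)T})$ for some $\beta_1>0$.

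I would then shift the truncated contour from $\mathrm{Re}(s)=d$ to $\mathrm{Re}(s)=h-\epsilon/2$, picking up the residue $\mathrm{Res}_{s=h}\big(\eta_V(s)e^{sT}/s\big)=Ce^{hT}/h$, which is the main term. On the left vertical segment $|e^{sT}|=e^{(h-\epsilon/2)T}$ and the growth bound gives an integral $\ll e^{(h-\epsilon/2)T}R^\xi$, which is $O(e^{(h-\beta_2)T})$ once $\alpha_0$ is small enough relative to $\epsilon$; on the two horizontal segments the integrand is $\ll R^{\xi-1}e^{dT}$, which is $O(e^{(h-\beta_3)T})$ precisely because $\xi<1$. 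Summing the residue and the three boundary contributions and setting $\beta=\min(\beta_1,\beta_2,\beta_3)>0$ gives $\sideset{}{'}\sum_{l(\gamma)\le T}|\gamma|=(C/h)e^{hT}(1+O(e^{-\beta T}))$, as required (with $\alpha$ any value less than this $\beta$).

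I expect the balancing of the Perron truncation error to be the main obstacle: one must choose the window $\delta$, the height $R$ and the shift $\epsilon/2$ so that every error contribution beats $e^{hT}$ by a fixed exponential factor, and this is exactly where both halves of Proposition~\ref{thmdom1} are used in tandem --- the simple pole supplying the main term, and crucially the sub-linear exponent $\xi<1$ so that the vertical integral can be made exponentially small while $R$ is still allowed to be exponentially large, and so that the horizontal segments decay. (An alternative that sidesteps the truncation error altogether is to run the same argument for the once-integrated sum $\sum_{l(\gamma)\le T}|\gamma|(T-l(\gamma))$, whose integral representation $\frac{1}{2\pi i}\int\eta_V(s)e^{sT}s^{-2}\,ds$ converges absolutely since $\xi<2$, then to shift the contour to obtain $\frac{C}{h^2}e^{hT}+O(e^{(h-\beta)T})$, and finally to recover the sharp sum by finite differencing, again using $(1.1)$ to control $\sum_{T<l(\gamma)\le T+u}|\gamma|$ and optimizing over the window width $u=u(T)$.)
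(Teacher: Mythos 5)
Your argument is essentially identical to the paper's proof: the same effective Perron formula with $d=h+1/T$, the same near/far splitting of the truncation error using $|\gamma|\ll l(\gamma)$ and the prime geodesic theorem with exponential error, and the same rectangular contour shift picking up the residue $Ce^{hT}/h$ with the horizontal and vertical segments controlled by the polynomial bound $\xi<1$ from Proposition~\ref{thmdom1}. The parameter balancing ($\delta$, $R$ exponential in $T$) also matches, so the proposal is correct and follows the paper's route.
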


\begin{proof}
Set $d = h + T^{-1}$, $\epsilon =e^{-xT}$ and $R= e^{yT}$  (for $x,y >0$ to be chosen later). Using Perron's Formula gives that
$$\sideset{}{'}\sum_{l(\gamma') \le T} |\gamma'| = \frac{1}{2\pi i} \int_{d-iR}^{d+iR} - \eta(s) \frac{e^{sT}}{s}  ds + \sum_{\gamma'} O\left( \frac{ |\gamma'|e^{d(T-l(\gamma'))}}{1+R|T-l(\gamma')|}  \right).$$
As noted in (1.1),  we have $\pi(T) = \textnormal{li}(e^{hT}) + O(e^{h'T})$ for some $h' < h$  \cite{PS-error}, 
and so if  $\alpha \in \mathbb{R}$ satisfies $0<\alpha<h-h'$, then
$$\#\{ \gamma': |l(\gamma') - T| \le e^{-\alpha T}\} = O(e^{(h-\alpha)T}).$$
Hence, if $0<x<h-h'$ and $|T-l(\gamma')|<\epsilon$, then $e^{d(T-l(\gamma'))} = O(1)$ and so
$$\sum_{ |T-l(\gamma')|\le \epsilon}   \frac{ |\gamma'|  e^{d(T-l(\gamma'))}}{1+R|T-l(\gamma')|} = O\left(T  \epsilon e^{hT} \right),$$
where we have used from (1.2) that $|\gamma'|< C l(\gamma')$ for some $C>0$.
On the other hand,  we also have 
$$ \sum_{ |T-l(\gamma')|> \epsilon}   \frac{ |\gamma'| 
 e^{d(T-l(\gamma'))}}{1+R|T-l(\gamma')|} \le \frac{e^{dT}}{R\epsilon} \sum_{\gamma'} |\gamma'|
 e^{-dl(\gamma')} = \frac{e^{dT}}{R\epsilon} (\eta(d) +O(1)).$$
Using the properties of $\eta(s)$ we have $\eta(d) = O(T)$ and so the above is
$O\left( \frac{ T e^{dT}}{R\epsilon}\right).$

Now fix $c<h$ such that $\eta$ is analytic in $\mathrm{Re}(s) > c$ apart from the pole at $s=h$. We integrate around the rectangle $\Gamma$ with vertices $d-iR, c-iR, c+iR,d+iR$. By the Residue Theorem 
$$\frac{1}{2\pi i} \int_\Gamma -\eta(s) \frac{e^{sT}}{s} ds = \frac{C}{h}e^{hT}.$$
Our bounds on  $\eta(s)$ guarantee that
$$\int_{c \pm iR}^{d \pm iR} -\eta(s) \frac{e^{sT}}{s} ds = O(R^{\xi-1} e^{dT})$$
and
$$\int_{c-iR}^{c+iR} -\eta(s) \frac{e^{sT}}{s} ds = O(R^\xi e^{cT}).$$
We now choose $y$ such that $R^\xi e^{cT}$ grows strictly exponentially slower that $e^{T}$.
We then  choose $0 < x < h-h'$ so that $ T e^{dT}/R\epsilon$ grows strictly exponentially slower than $e^{hT}$.
\end{proof}

A standard argument shows that the same expression from Lemma  \ref{qp1} holds when we sum over prime periodic orbits. Furthermore, recalling our observation that for small $x >0$ and $\epsilon=e^{-x T}$
$$\#\{\gamma: |l(\gamma) - T|<\epsilon\} =  O(\epsilon e^{hT}),$$
it is easy to remove the terms corresponding to $l(\gamma) = T$ from our asymptotic expression. 
Finally, we deduce that there exists  some $\alpha>0$ for which

\begin{equation*}
 \sum_{l(\gamma ) < T}    |\gamma|  =   \frac{C}{h} e^{hT} \left( 1 + O\left( e^{-\alpha T} \right) \right). \eqno(4.1)
\end{equation*}

\subsection{Proofs  of Theorem $1.2$ and Corollary \ref{secondcor} }
We are now ready to complete the proof of  Theorem  \ref{asymav} 

\begin{proof}[Proof of Theorem $1.2$]
As stated in (1.1) we have  that $\pi(T) = \textnormal{li}(e^{hT}) + O(e^{h'T})$ for some $h'<h$ \cite{PS-error}. Dividing (4.1) by this expression gives that
$$\frac{1}{\pi(T)} \sum_{l(\gamma)<T} |\gamma| = \frac{A}{h} \frac{ e^{hT}}{\textnormal{li}(e^{hT})} ( 1 + O(e^{-\beta' T}))$$
(the identification of $C$ with $A$ being apparent from (1.3))
for some $\beta'>0$ as required.
\end{proof}
We can now deduce Corollary \ref{secondcor} from this result. 

\begin{proof}[Proof of Corollary 1.3]
The following expression will be useful:
\begin{align*}
\int_2^T \frac{e^{hu}}{u^2} \ du 
&=h \ \text{li}(e^{hT}) - \frac{e^{hT}}{T} + O(1).
\end{align*}
This follows from the substitution $ u = \log x /h$
and integration by parts.
We now define 
$$\varphi(u)= \sum_{l(\gamma)<u} |\gamma| \eqno(4.2)$$ and note that
$$\sum_{l(\gamma)<T} \frac{|\gamma|}{l(\gamma)} = \int_2^T \frac{1}{u} \ d\varphi(u) + O(1) = \frac{\varphi(T)}{T} + \int_2^T \frac{\varphi(u)}{u^2} du +O(1).$$
Using our expression (4.2) for $\varphi(u)$  we deduce that there exists $h' <h$ such that
\begin{align*}
\sum_{l(\gamma)<T} \frac{|\gamma|}{l(\gamma)} &= \frac{A}{h} \frac{e^{hT}}{T} + \frac{A}{h} \int_2^T \frac{e^{hu}}{u^2} du + O(e^{h'T})\\
&=\frac{A}{h}\frac{e^{hT}}{T} + \frac{A}{h} \left( h \ \text{li}(e^{hT}) - \frac{e^{hT}}{T}\right) + O(e^{h'T})\\
&= A \ \text{li}(e^{hT}) + O(e^{h'T}).
\end{align*}
Dividing through by $\pi(T)$ then concludes the proof.
\end{proof}


\section{Zeta functions}
Although we were able to prove the results thus far using functions of a single variable, to proceed with the remaining results we need to consider functions of two variables.
A well known approach to the classical theorem of Margulis
(used in \cite{PP1}, see also \cite{PP2},  and inspired by the proof of the Prime Number Theorem,  rather than in the original work of Margulis \cite{margulis1}) is to use a zeta function of one variable of the form
$$
\zeta(s) 
=\prod_{\gamma \in \mathcal{P}}
\left(
1 - e^{- sl(\gamma)}
\right)^{-1}, \quad s \in \mathbb R,
$$
which converges to a non-zero analytic function for $\mathrm{Re}(s) > h$, where $h$ is the topological entropy of the associated geodesic flow.
The classical approach to studying this zeta function is to use symbolic dynamics.   It is known in this case that there exists $\epsilon > 0$, 
so that we can write 
 $$
 \zeta(s) = - \frac{1}{s- h} + \chi_0(s)
 $$
 where 
 \begin{enumerate}
 	\item
 	the function $\chi_0(s)$  is analytic and non-zero on 
 	$\mathrm{Re}(s) > h-\epsilon$ ; and 
 	\item there exists $\xi>0$ such that for $|\mathrm{Im}(s)|\ge 1$ we have $|\zeta(s)|\le C |\mathrm{Im}(s)|^\xi$.
\end{enumerate}
In particular, $\zeta(s)$ has a simple pole at $s=h$.

 \subsection{A zeta function in two variables}
 We can formally define a zeta function in two variables by
 $$
 \zeta_V(s,z) = \prod_\gamma \left( 1 - z^{|\gamma|} e^{-sl(\gamma)}\right)^{-1}
 $$
 where $s,z\in \mathbb C$.
Using this as a model we can now formulated a dynamical zeta function associated to our shift space.

 \begin{definition}
 	We can formally define
 	$$\zeta(s,z) =\exp  \left( \sum_{n=1}^\infty 
 	\frac{e^{nz}}{n} \sum_{T^n x =x}
  e^{ - sr^n(x)}\right),\quad  s, z \in \mathbb C$$
 	which converges to an analytic function for $\mathrm{Re}(s)$ sufficiently large and $\mathrm{Re}(z)$ sufficiently small. The sum over $x$ is taken over periodic points for $T: I \to I$.
 \end{definition}
 
 We can write $\zeta(s,z)  = \zeta_V(s,z)  + \Psi(z,s)$ where $\Psi(z,s)$ is analytic and non-zero for $\mathrm{Re}(s)>0$ and $z$ close to $1$.
 
As before, it will sometimes be more convenient to work with the symbolic model $\Sigma_A^+$  than our interval map $T:I\to I$. 
Given Lemma \ref{codings}, we can write
$$\zeta(s,z) =  \exp  \left( \sum_{n=1}^\infty 
 	\frac{e^{nz}}{n} \sum_{\sigma^n x =x}
  e^{ - s\tilde{r}^n(x)}\right)$$
where the sum over $x$ is taken over periodic points for $\sigma: \Sigma_A^+\to \Sigma_A^+$.
 Hence the series in $\zeta(s,z)$ converges provided 
 $$
 |e^{z}| <  e^{-P(-\mathrm{Re}(s)\tilde{r})} = \limsup_{n \to \infty}
 \left(\sum_{\sigma^nx=x} \exp( - \mathrm{Re}(s) \tilde{r}^n(x))\right)^{-\frac{1}{n}}.
 $$

 In the definition of $\zeta(s,z)$ the  $s$ variable  keeps track of the geodesic lengths and the $z$ variable keeps track of the word length. 
In this respect,  $\zeta(s,z)$ has advantage over  the usual single variable zeta function $\zeta(s)$ (just for lengths of closed geodesics) or  even the  generating function (just for word lengths of free homotopy classes) \cite{epstein}, \cite{ghys}.

\subsection{Properties of $\zeta(s,z)$ and its logarithmic derivative}
All of our remaining results are consequences  of the  following proposition which describes the domain of analyticity of the logarithmic derivative
$\frac{\zeta'(s,z)}{\zeta(s,z)},$ where we differentiate with respect to $s$.

\begin{prop} \label{thmdom2}
There there exists $\epsilon, \delta>0$ and an  analytic function
 $$\sigma: \{z \in \mathbb C : |z| < \delta\} \to \mathbb{C} $$
  such that:
 for each  $|z| < \delta$ we can write 
$$
 \frac{\zeta'(s,z)}{\zeta(s,z)} = - \frac{1}{s- \sigma(z)} + \chi_0(s,z)
$$
where 
\begin{enumerate}
	\item
 the function $\chi_0(s,z)$  is bi-analytic on 
 $|\mathrm{Re}(s) - h| < \epsilon$ and $|z|< \delta$; and 
\item there exists $\xi, M>0$ such that for $|\mathrm{Im}(s)|\ge 1$ we have $\left|\frac{\zeta'(s,z)}{\zeta(s,z)}\right|\le M|\mathrm{Im}(s)|^\xi$.
\end{enumerate}
Furthermore, we can choose $\epsilon, \delta$ so that $0< \xi <1$.
\end{prop}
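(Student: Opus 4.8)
The plan is to follow the architecture of the proof of Proposition \ref{thmdom1}, now carrying the auxiliary variable $z$ through every estimate and treating it as a perturbation parameter about $z=0$: when $s$ is close to $h$ we work symbolically with the operators $\widetilde{\mathcal L}_{s,z}=e^z\widetilde{\mathcal L}_s$, and when $\mathrm{Im}(s)$ is large we pass to the interval map and use $\mathcal L_{s,z}=e^z\mathcal L_s$. The basic observation is that multiplication by $e^z$ only rescales and rotates, so the leading eigenvalue of $\widetilde{\mathcal L}_{s,z}$ is $e^{P(z-s\tilde{r})}=e^z e^{P(-s\tilde{r})}$ and Lemma \ref{pert} already furnishes joint analyticity of this eigenvalue and of its eigenprojection in $(s,z)$ near $(h,0)$. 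We will show that $\tfrac{\zeta'}{\zeta}(s,z)$ is meromorphic on a domain $\{|\mathrm{Re}(s)-h|<\epsilon,\ |z|<\delta\}$ with its only singularity a simple pole at $s=\sigma(z)$ of residue $-1$, and then take $\chi_0(s,z):=\tfrac{\zeta'}{\zeta}(s,z)+\tfrac{1}{s-\sigma(z)}$.

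\textbf{The region $s$ close to $h$.} Here I would apply the bivariate estimate of Lemma $3.4$ (Theorem $5.5$ of \cite{PP2}) directly: for $|s-h|<\epsilon$ and $|z|<\delta$,
$$
\log\zeta(s,z)=\sum_{n\geq1}\frac{e^{nz}}{n}\sum_{\sigma^nx=x}e^{-s\tilde{r}^n(x)}=\sum_{n\geq1}\frac{e^{nP(z-s\tilde{r})}}{n}+\Phi(s,z),
$$
where $\Phi(s,z)=\sum_{n\geq1}\tfrac{1}{n}\left(e^{nz}\sum_{\sigma^nx=x}e^{-s\tilde{r}^n(x)}-e^{nP(z-s\tilde{r})}\right)$ is a series of bi-analytic functions converging uniformly in $(s,z)$, hence is itself bi-analytic on the polydisc. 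On the subregion where $\mathrm{Re}\,P(z-s\tilde{r})<0$ the first sum is $-\log\left(1-e^{P(z-s\tilde{r})}\right)$, so $\zeta(s,z)=e^{\Phi(s,z)}/\left(1-e^{P(z-s\tilde{r})}\right)$ continues meromorphically, and without zeros, to the whole polydisc, with poles exactly at the zeros of $s\mapsto 1-e^{z+P(-s\tilde{r})}$. Since $\tfrac{d}{ds}P(-s\tilde{r})\big|_{s=h}=-\int\tilde{r}\,d\mu_{-h\tilde{r}}\neq0$, the implicit function theorem produces an analytic $\sigma:\{|z|<\delta\}\to\mathbb{C}$ with $\sigma(0)=h$ and $z+P(-\sigma(z)\tilde{r})=0$, and --- after shrinking $\epsilon,\delta$ --- this is the unique, and simple, zero near $h$. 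Differentiating $\log\zeta$ gives $\tfrac{\zeta'}{\zeta}(s,z)=\Phi'(s,z)+\tfrac{\partial_s e^{P(z-s\tilde{r})}}{1-e^{P(z-s\tilde{r})}}$, and Taylor-expanding the denominator at $s=\sigma(z)$ shows the second term has a simple pole there of residue $-1$ and a bi-analytic remainder. This establishes the claim on $\{|\mathrm{Re}(s)-h|<\epsilon,\ |\mathrm{Im}(s)|\leq\alpha,\ |z|<\delta\}$ for some $\alpha>0$.

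\textbf{The region $|\mathrm{Im}(s)|\geq1$, and gluing.} Here I would rerun the argument of Section $3.4$ with $\mathcal L_{s,z}=e^z\mathcal L_s$ in place of $\mathcal L_s$. Writing $t=\mathrm{Im}(s)$ and $n=m[\log|t|]+l$ as in Lemma \ref{dolgo}, the extra factor $|e^{nz}|=|t|^{m\mathrm{Re}(z)}e^{l\mathrm{Re}(z)}$ is absorbed provided $\delta<|\log\rho|$: then $|t|^{\mathrm{Re}(z)}\rho^{[\log|t|]}<1$, so the geometric sum over $m$ still converges, while the sum over $0\leq l\leq[\log|t|]-1$ of $e^{l\mathrm{Re}(z)}e^{lP(-\mathrm{Re}(s)\,r)}$ grows at most polynomially in $|t|$. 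Combining this with Lemma \ref{ruellecomp} shows that the defining series for $\log\zeta(s,z)$ converges on $\{\mathrm{Re}(s)>h-\epsilon,\ |\mathrm{Im}(s)|\geq1,\ |z|<\delta\}$; hence $\zeta(s,z)$ is analytic and non-vanishing there, a Cauchy estimate for $\partial_s\log\zeta$ yields $|\tfrac{\zeta'}{\zeta}(s,z)|\leq M|\mathrm{Im}(s)|^\xi$, and since the spectral radius of $\mathcal L_{s,z}$ is at most $e^{\mathrm{Re}(z)}\rho<1$ there are no poles in this range. To fill the remaining strip $\alpha\leq|\mathrm{Im}(s)|\leq1$ I would use, at $z=0$, that $\zeta(s,0)$ has no pole on $\mathrm{Re}(s)=h$ other than $s=h$ (weak mixing of the geodesic flow, \cite{PP2}), together with a compactness argument and continuity in $z$, to conclude that after possibly reducing $\epsilon$ and $\delta$ the only singularity of $\tfrac{\zeta'}{\zeta}(s,z)$ on $\{|\mathrm{Re}(s)-h|<\epsilon,\ |z|<\delta\}$ is the simple pole at $s=\sigma(z)$. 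Finally, after possibly reducing $\epsilon$ once more, the Phragm\'en--Lindel\"of principle --- applied in $s$ for each fixed $z$, interpolating the bound $|\mathrm{Im}(s)|^\xi$ near $h$ against the boundedness of $\tfrac{\zeta'}{\zeta}(s,z)$ on a vertical line $\mathrm{Re}(s)=\sigma_0>h$ --- improves the exponent to $0<\xi<1$.

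\textbf{Main obstacle.} The one genuinely new difficulty relative to Proposition \ref{thmdom1} is maintaining \emph{uniformity in $z$} throughout, in particular of the Dolgopyat bound of Lemma \ref{dolgo} after multiplying the transfer operators by $e^z$: this is what forces the constraint $\delta<|\log\rho|$ and, more delicately, requires every implied constant --- in the perturbation expansion near $h$, in the comparison Lemma \ref{ruellecomp}, and in the strip-filling compactness step --- to be chosen uniform over the disc $|z|<\delta$. A secondary, more technical, point is that one must track analyticity in the new variable $z$, jointly with $s$, at every step; this follows from the joint analyticity already built into Lemmas \ref{pert} and $3.4$ combined with uniform control of the series tails in $(s,z)$.
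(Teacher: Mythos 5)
Your proposal follows the paper's proof almost step for step: the same decomposition into a neighbourhood of $s=h$ (analytic continuation of $\log\zeta$ via the pressure, implicit function theorem applied to $z+P(-\sigma(z)\tilde r)=0$, residue $-1$), the region $|\mathrm{Im}(s)|\ge 1$ (Dolgopyat's bound with the factor $e^{nz}$ absorbed under a smallness condition on $\delta$, combined with the comparison of Lemma \ref{ruellecomp}), the intermediate strip $\alpha\le|\mathrm{Im}(s)|\le 1$ (weak mixing plus upper semicontinuity of the spectrum and compactness), and finally Phragm\'en--Lindel\"of to force $0<\xi<1$. The one place you genuinely diverge is the passage from the bound $|\log\zeta(s,z)|=O(|\mathrm{Im}(s)|)$ to the bound on $\zeta'/\zeta$: you propose a Cauchy estimate on the branch of $\log\zeta$ defined by the convergent series, whereas the paper invokes the Ellison lemma (Lemma \ref{inequ}), which trades a bound on $\log|F|$ plus a zero-free region for a bound on $F'/F$. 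Your route is more elementary and is legitimate in the Dolgopyat region, since there the series for $\log\zeta$ converges on an open half-plane $\mathrm{Re}(s)>h-\epsilon$ and one can differentiate on a slightly smaller half-plane at the cost of shrinking $\epsilon$; the paper's route via Lemma \ref{inequ} is the one that generalises to regions where only $|\zeta|$ (not a single-valued logarithm) is controlled, and the paper's stated reason for the extra care --- uniformity of all constants in $|z|<\delta$ --- applies equally to either method, as you correctly flag in your final paragraph.
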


To study the behaviour of $\zeta(s,z)$ when $s$ is close to $h$, it is easier to take the symbolic view point and so in the following lemma, we work with $\Sigma_A^+$ and $\tilde{r} : \Sigma_A^+ \to \mathbb{R}$.

\begin{lemma} 
	There exists $\alpha, \epsilon, \delta > 0$ so that 
	we can write 
	$$
	\frac{\zeta'(s,z)}{\zeta(s,z)} = - \frac{P'(-s\tilde{r}) e^{z+P(-s\tilde{r})}}{1 - e^{z + P(-s\tilde{r})}} + \Psi_0(s,z)
	$$
	where $\Psi_0(s,z)$ is bi-analytic for $|z| < \delta$ and $|s-h|< \epsilon$, $|\mathrm{Im}(s)| \leq \alpha$. 
\end{lemma}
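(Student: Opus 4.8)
The plan is to mirror the argument used for $\eta(s)$ in Section 3.2, replacing the one‑variable generating function by its two‑variable analogue and the eigenvalue $e^{P(-s\tilde r)}$ by the perturbed eigenvalue $e^{P(z-s\tilde r)} = e^z e^{P(-s\tilde r)}$ coming from Lemma \ref{pert}. First I would invoke the perturbation statement of Lemma \ref{pert} (in its symbolic form for $\widetilde{\mathcal L}_{s,z}$), together with the asymptotic expansion of the periodic‑orbit sums analogous to Lemma 3.4, to write, for $|z|<\delta$ and $|s-h|<\epsilon$,
$$
\sum_{n=1}^\infty \frac{e^{nz}}{n}\sum_{\sigma^n x=x} e^{-s\tilde r^n(x)}
= \sum_{n=1}^\infty \frac{e^{nP(z-s\tilde r)}}{n} + G(s,z),
$$
where $G(s,z)$ is the correction term, bi‑analytic on the stated region because the error $O(\theta^n)$ in the analogue of Lemma 3.4 is uniform in $(s,z)$ and summing $\theta^n e^{nz}/n$ against a geometric tail gives an analytic function. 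Exponentiating, this yields
$$
\zeta(s,z) = \frac{1}{1 - e^{z+P(-s\tilde r)}}\, e^{G(s,z)}
$$
on the region where $\mathrm{Re}\big(z+P(-s\tilde r)\big)<0$, and both factors extend bi‑analytically and are non‑zero to a neighbourhood of $\{|\mathrm{Re}(s)-h|<\epsilon,\ |z|<\delta\}$ minus the zero locus of $1-e^{z+P(-s\tilde r)}$.

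Next I would take the logarithmic derivative in $s$. Writing $h(s,z)$ for the smooth function $\partial_s P(-s\tilde r) = -P'(-s\tilde r)$ (here the derivative of pressure along the family $-s\tilde r$, which is analytic in $s$ by Lemma \ref{pert} and equals $-\int \tilde r\, d\mu_{-s\tilde r}$ up to sign), the chain rule gives
$$
\frac{\zeta'(s,z)}{\zeta(s,z)}
= -\frac{P'(-s\tilde r)\, e^{z+P(-s\tilde r)}}{1 - e^{z+P(-s\tilde r)}} + \partial_s G(s,z),
$$
and one sets $\Psi_0(s,z) := \partial_s G(s,z)$, which is bi‑analytic on $|z|<\delta$, $|s-h|<\epsilon$ (shrinking $\epsilon,\delta$ if necessary so that this neighbourhood avoids the singularities of the first term other than at the relevant branch). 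The restriction $|\mathrm{Im}(s)|\le\alpha$ in the statement is exactly what is needed to stay away from the other zeros of $1-e^{z+P(-s\tilde r)}$: since $P(-h\tilde r)=0$ and $P'$ is real and negative at $s=h$, $z=0$, for small $z$ the equation $z+P(-s\tilde r)\in 2\pi i\mathbb Z$ has a solution $s=\sigma(z)$ near $h$ (this $\sigma(z)$ is the analytic function appearing in Proposition \ref{thmdom2}, obtained by the implicit function theorem from $P(-\sigma(z)\tilde r) = -z$), and no other solution in the box $|s-h|<\epsilon$, $|\mathrm{Im}(s)|\le\alpha$.

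The step I expect to be the main obstacle is establishing the uniformity in $(s,z)$ of the error term $O(\theta^n)$ in the two‑variable analogue of Lemma 3.4 — i.e. that the spectral gap of $\widetilde{\mathcal L}_{s,z}$ persists uniformly as $z$ ranges over $|z|<\delta$ and $s$ over $|s-h|<\epsilon$, so that $\sum_{\sigma^n x=x} e^{-s\tilde r^n(x)} = e^{nP(-s\tilde r)}(1+O(\theta^n))$ with the implied constant independent of $(s,z)$. This is not conceptually hard — multiplication by $e^z$ is a bounded invertible operator depending analytically on $z$, so it merely tilts the whole spectrum by $e^z$ and preserves the gap — but it has to be checked carefully using the Ruelle operator theorem and analytic perturbation theory uniformly on a compact parameter set, and it is the only place where a genuine estimate (rather than bookkeeping with the chain rule) is required. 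Everything else — the exponentiation, taking $\partial_s$, and identifying the pole at $s=\sigma(z)$ — is routine once this uniform quasi‑compactness is in hand.
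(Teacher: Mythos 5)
Your proposal is correct and follows essentially the same route as the paper: the paper likewise invokes the uniform two-variable estimate $e^{nz}\sum_{\sigma^n x = x}e^{-s\tilde r^n(x)} = e^{nP(z-s\tilde r)} + O(\theta^n)$ (its Lemma 3.4, quoted from Parry--Pollicott), exponentiates to get $\zeta(s,z)=\widetilde\Psi_0(s,z)\bigl(1-e^{z+P(-s\tilde r)}\bigr)^{-1}$ with $\widetilde\Psi_0$ bi-analytic (via Hartogs), and then takes the logarithmic derivative in $s$. The uniform spectral-gap input you flag as the main obstacle is exactly what the paper outsources to the cited reference, and your remark on locating the zeros of $1-e^{z+P(-s\tilde r)}$ correctly accounts for the restriction $|\mathrm{Im}(s)|\le\alpha$.
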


\begin{proof}
We sketch a proof referring the reader to \cite{PP1} for more details. By Lemma $3.4$, when $\textnormal{Re}(P(z-s\tilde{r})) <0$, $\zeta(s,z)$ is well defined, and
$$\zeta(s,z) = \widetilde{\Psi}_0(s,z) \exp\left\{ \sum_{n=1}^\infty \frac{e^{nP(z-s\tilde{r})}}{n}\right\},$$
for $\widetilde{\Psi}_0(s,z)$ that is bi-analytic, say in $|z|<\delta$ and $|s-h|<\epsilon$, by Hartogs' Theorem (Theorem $1.2.5$, \cite{kranz}). Both sides of the above expression are analytic and non-zero when $\textnormal{Re}(P(z-s\tilde{r})) <0$. Hence we can take the logarithmic derivative of $\zeta$ with respect to $s$ to obtain,
$$ \frac{\zeta'(s,z)}{\zeta(s,z)}  = \Psi_0(s,z)  - \frac{P'(-s\tilde{r})e^{z+P(-s\tilde{r})}}{1-e^{z+P(-s\tilde{r})}} $$
for some function $\Psi_0(s,z)$ that is bi-analytic in $|z|<\delta, |s-h|<\epsilon$. This provides the required analytic extension.
\end{proof}

To see that $s=h$ is a simple pole  we need the following result.

\begin{lemma}[Ruelle] \label{ruellelem}
	We have that
	\begin{align*}
	&\frac{d}{ds} P(-s\tilde{r})\Big|_{s=h} = - \int \tilde{r} \ d\mu_{-h\tilde{r}} <0\\
	&\frac{d}{dz} P(z-s\tilde{r})\Big|_{(h,0)} = 1,
	\end{align*}
	where $\mu_{-h\tilde{r}}$ is the 
	equilibrium state for $-h \tilde{r}$.
\end{lemma}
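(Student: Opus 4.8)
The plan is to derive both identities from the standard differentiation formula for the pressure function along an analytic family of H\"older potentials, a classical fact from thermodynamic formalism (see e.g.\ \cite{PP2}, Proposition 4.10). Recall that if $t \mapsto g_t$ is a real-analytic family in $F_\theta$, then $\frac{d}{dt} P(g_t) = \int \dot g_t \, d\mu_{g_t}$, where $\mu_{g_t}$ is the equilibrium state of $g_t$ and $\dot g_t = \frac{d}{dt} g_t$; this in turn follows from the analyticity of the leading eigenvalue $\lambda(\cdot)$ of the transfer operator together with the eigenprojection formula $P(g_t) = \log \lambda(g_t)$ and a first-order perturbation (Kato) computation.

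First I would apply this to the one-parameter family $g_s = -s\tilde r$, for $s$ real in a neighbourhood of $h$. Here $\dot g_s = -\tilde r$, so the differentiation formula immediately gives $\frac{d}{ds} P(-s\tilde r) = -\int \tilde r \, d\mu_{-s\tilde r}$, and evaluating at $s = h$ yields $\frac{d}{ds} P(-s\tilde r)\big|_{s=h} = -\int \tilde r \, d\mu_{-h\tilde r}$. The strict negativity is then the only substantive point: since $V$ is a compact negatively curved surface the roof function $\tilde r$ is strictly positive and bounded away from $0$ (it records genuine transition times for the geodesic flow), hence $\int \tilde r \, d\mu_{-h\tilde r} > 0$ for any probability measure, giving the claimed sign. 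Second, I would apply the same formula to the family $(z,s) \mapsto z - s\tilde r$, differentiating in the $z$ variable; now $\frac{\partial}{\partial z}(z - s\tilde r) = 1$ (the constant function), so $\frac{d}{dz} P(z - s\tilde r) = \int 1 \, d\mu_{z-s\tilde r} = 1$ because equilibrium states are probability measures. This holds identically, and in particular at $(h,0)$, giving the second identity. (Alternatively, one can observe directly from the variational principle that $P(g + c) = P(g) + c$ for any constant $c$, which makes the $z$-derivative equal to $1$ without any perturbation theory at all.)

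The only mild obstacle is bookkeeping: one must check that the family $-s\tilde r$ stays within the range of $s$ for which the transfer operator $\widetilde{\mathcal L}_s$ has a spectral gap and an analytically varying simple leading eigenvalue, which is exactly the content of Lemma \ref{pert} for $|s-h|<\delta$, so real $s$ near $h$ is covered. Everything else is a direct citation of the pressure-differentiation formula plus the elementary facts that equilibrium states are probability measures and that $\tilde r$ is a strictly positive H\"older function. I expect the write-up to be short, essentially reducing to these two invocations.
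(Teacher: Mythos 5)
Your proposal is correct and matches the paper's approach: the paper's entire proof is a citation to Proposition 4.10 of Parry--Pollicott and Ruelle's book, which is precisely the pressure-differentiation formula $\frac{d}{dt}P(g_t)=\int \dot g_t\, d\mu_{g_t}$ that you invoke. Your additional observations (strict positivity of $\tilde r$ for the sign, and $P(g+c)=P(g)+c$ for the $z$-derivative) are exactly the standard way these two evaluations are carried out.
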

\begin{proof}
	See Proposition $4.10$ of \cite{PP1} and \cite{ruelle-book}.
\end{proof}

We also recall the following standard result which we will use to prove the non-degeneracy of our central limit theorem in Theorem $1.6$.
\begin{lemma}
	The pressure $s \mapsto P(-s\tilde{r})$ is strictly convex at $h$, i.e 
	$$ \frac{d^2}{ds^2} P(-s\tilde{r}) \Big|_{s=h} > 0.$$
\end{lemma}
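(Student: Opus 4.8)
The plan is to prove strict convexity of $s \mapsto P(-s\tilde{r})$ at $s = h$ by establishing the standard variance identity from thermodynamic formalism. Recall that for a real-analytic family, differentiating the defining relation for the leading eigenvalue (equivalently, differentiating pressure twice) produces
\[
\frac{d^2}{ds^2} P(-s\tilde{r})\Big|_{s=h} = \mathrm{Var}_{\mu_{-h\tilde{r}}}(\tilde{r}) := \lim_{n\to\infty} \frac{1}{n} \int \left( \tilde{r}^n - n\textstyle\int \tilde{r}\, d\mu_{-h\tilde{r}} \right)^2 d\mu_{-h\tilde{r}},
\]
where $\mu_{-h\tilde{r}}$ is the equilibrium state. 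This is a classical fact; it can be cited from Parry--Pollicott \cite{PP1} (the analyticity of pressure and the first and second derivative formulas appear there), in the same spirit as Lemma 5.6 which already records the first derivatives. So the first step is to write down this identity, referring to \cite{PP1}.

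The second and main step is to show this variance is strictly positive, which by the standard theory is equivalent to showing that $\tilde{r}$ is \emph{not} cohomologous to a constant, i.e.\ there is no continuous $u: \Sigma_A^+ \to \mathbb{R}$ and constant $c$ with $\tilde{r} = u \circ \sigma - u + c$. Here I would invoke the geometric meaning of $\tilde{r}$: by Lemma \ref{codings}, for a closed orbit of period $n$ corresponding to a closed geodesic $\gamma$, one has $\tilde{r}^n(x) = l(\gamma)$ while $n = |\gamma|$. If $\tilde{r}$ were cohomologous to a constant $c$, then summing around each periodic orbit the coboundary part telescopes to zero, giving $l(\gamma) = c\,|\gamma|$ for all but finitely many closed geodesics. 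This would force the ratio $|\gamma|/l(\gamma)$ to be identically $1/c$, contradicting known facts — e.g.\ it contradicts the nondegeneracy one expects, and more concretely it contradicts that distinct closed geodesics can have the same word length but different geometric lengths (or it contradicts the positivity of the variance $\sigma^2$ appearing in Theorem \ref{asymvar}, though to avoid circularity I would argue directly). The cleanest direct contradiction: if $l(\gamma) = c|\gamma|$ for cofinitely many $\gamma$, then the length spectrum would be contained in $c\mathbb{Z}$ up to finitely many values, contradicting the fact that the geodesic flow is weak-mixing / the length spectrum is non-lattice (already used in the excerpt to rule out poles of $\eta$ on $\mathrm{Re}(s) = h$).

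A third, alternative route to positivity — perhaps the safest to write — is to note that $P(-s\tilde{r})$ is a convex function of $s$ (convexity of pressure is automatic), that $P(-s\tilde{r}) \to +\infty$ as $s \to -\infty$ and $\to -\infty$ as $s \to +\infty$ since $\tilde{r} > 0$ is bounded away from $0$, so it is strictly decreasing through its unique zero structure; and a convex function that is affine on any interval would have all closed orbit sums $\sum_{\sigma^n x = x} e^{-s\tilde r^n(x)}$ behaving like a single exponential, again forcing $\tilde r$ cohomologous to a constant. Then conclude as above. I would present the argument via the cohomology criterion since that is the textbook statement (\cite{PP1}).

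The main obstacle is purely expository rather than mathematical: making the non-cohomologous-to-a-constant claim rigorous without circularity, since $\sigma^2 > 0$ in Theorem \ref{asymvar} is essentially this same statement. I would resolve this by deriving the contradiction from the non-lattice property of the geodesic length spectrum (a genuinely external input, via weak-mixing of the geodesic flow, already cited in the excerpt) rather than from $\sigma^2 > 0$. Everything else — the variance formula, convexity of pressure, analyticity — is standard and citable from \cite{PP1} and \cite{ruelle-book}.
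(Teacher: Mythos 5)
Your argument is correct and is precisely the standard proof behind the citation the paper gives (its entire proof of this lemma is ``This is well known, see [PP2]''): the second derivative of pressure is the asymptotic variance of $\tilde r$ with respect to $\mu_{-h\tilde r}$, which vanishes only if $\tilde r$ is cohomologous to a constant, and that degeneracy is excluded by weak-mixing of the geodesic flow --- the same non-lattice input the paper already invokes to keep $\eta$ pole-free on $\mathrm{Re}(s)=h$. Your care in avoiding circularity with the $\sigma^2>0$ of Theorem 1.4 is well placed, since that constant is later computed from $P''(-h\tilde r)$ and so cannot be used as the source of positivity here.
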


\begin{proof}
This is well known, see \cite{PP2}.
\end{proof}

 Lemma \ref{ruelle} allows us to apply the implicit function theorem to deduce that $\zeta'(s,z)/\zeta(s,z)$ has a pole near $h$ at $s = \sigma(z)$, implicitly defined by 
$$z + P(-\sigma(z)\tilde{r})= 0 \eqno(5.1),$$
where $s(0) = h$ and $z \mapsto \sigma(z)$ is analytic in a neighbourhood of zero. Furthermore, since
$$\lim_{s \to \sigma(z)} (s-\sigma(z))\frac{P'(-s\tilde{r})e^{z+P(-s\tilde{r})}}{1-e^{z+P(-s\tilde{r})}} = - 1,$$
we see that the logarithmic derivative of $\zeta$ has a simple pole of residue $-1$ at $s=\sigma(z)$. Using the implicit function and Lemma $\ref{ruellelem}$ we obtain the following.

\begin{cor}
	We can write $\sigma'(0) = \frac{1}{\int r d\mu_{-h\tilde{r}}}$.
\end{cor}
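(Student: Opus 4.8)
The plan is to extract $\sigma'(0)$ by differentiating the defining relation $(5.1)$ for the pole location $\sigma(z)$, and then to evaluate the resulting pressure derivatives using Lemma~\ref{ruellelem}. This is essentially an exercise in implicit differentiation once the right ingredients are assembled, so I expect the proof to be short; the only mild subtlety is keeping track of which pressure derivative ($\partial_s$ versus $\partial_z$) appears where.

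First I would recall that $(5.1)$ states $z + P(-\sigma(z)\tilde r) = 0$ for all $z$ in a neighbourhood of $0$, with $\sigma(0) = h$, and that $z \mapsto \sigma(z)$ is analytic there (this analyticity coming from the implicit function theorem applied to the function $(s,z) \mapsto z + P(-s\tilde r)$, whose $s$-derivative at $(h,0)$ is $\tfrac{d}{ds}P(-s\tilde r)\big|_{s=h} = -\int \tilde r\, d\mu_{-h\tilde r} \neq 0$ by Lemma~\ref{ruellelem}). Next I would differentiate the identity $z + P(-\sigma(z)\tilde r) = 0$ with respect to $z$ using the chain rule, obtaining
$$
1 + \sigma'(z)\cdot \frac{d}{ds}P(-s\tilde r)\Big|_{s = \sigma(z)} = 0.
$$
Evaluating at $z = 0$, where $\sigma(0) = h$, and substituting the first identity of Lemma~\ref{ruellelem}, namely $\tfrac{d}{ds}P(-s\tilde r)\big|_{s=h} = -\int \tilde r\, d\mu_{-h\tilde r}$, gives
$$
1 - \sigma'(0)\int \tilde r\, d\mu_{-h\tilde r} = 0,
$$
and hence $\sigma'(0) = \left(\int \tilde r\, d\mu_{-h\tilde r}\right)^{-1}$. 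Finally I would note that under the coding of Lemma~\ref{codings}, the symbolic roof function $\tilde r$ and the interval roof function $r$ carry the same integral against the corresponding equilibrium state (both representing the geodesic length as a function on the appropriate model), so $\int \tilde r\, d\mu_{-h\tilde r} = \int r\, d\mu_{-h\tilde r}$, matching the statement of the corollary as written.

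The main obstacle, such as it is, is purely bookkeeping: one must be careful that the $z$-dependence in $(5.1)$ enters \emph{both} explicitly (the leading $z$) and implicitly (through $\sigma(z)$ inside the pressure), and that the relevant derivative of the pressure to invoke from Lemma~\ref{ruellelem} is the $s$-derivative $\tfrac{d}{ds}P(-s\tilde r)\big|_{s=h}$, not the $z$-derivative $\tfrac{d}{dz}P(z - s\tilde r)\big|_{(h,0)} = 1$; the latter is exactly the coefficient of the explicit $z$ term, which is why it appears as the constant $1$ on the left of the differentiated identity. No deep input beyond Lemma~\ref{ruellelem} and the implicit function theorem is required.
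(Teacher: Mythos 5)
Your proof is correct and follows exactly the paper's route: the authors likewise differentiate the implicit relation $(5.1)$, obtaining $-P'(-\sigma(z)\tilde r)\sigma'(z)=1$ (their equation $(6.2)$), and then evaluate at $z=0$ using Lemma~\ref{ruellelem}. Your extra remark distinguishing the explicit $z$ from the implicit dependence through $\sigma(z)$, and the identification $\int \tilde r\, d\mu_{-h\tilde r}=\int r\, d\mu_{-h\tilde r}$, are consistent with the paper's (tacit) conventions.
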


We will later see that $\sigma'(0)$ is in fact the constant $A$ from our theorems.

\begin{rem}
	If we let $R$ be the union of the curves corresponding to generators for $\pi_1(V)$
	then $\sigma'(0) = A $ can be interpreted as the total induced measure of $R$ from the Bowen-Margulis measure $\mu$ of maximal entropy, i.e., $A =  \lim_{\epsilon\to 0} \frac{1}{\epsilon}\mu(\phi_{[0, \epsilon]} S_RV)$. 
	In the case of a constant curvature 
	surface of genus $g$ and the generators described in \cite{series} by  \cite{AKU} one  has the geomeric interpretation $A = \hbox{length}(\partial R)/\pi^2(2g-2)$
	where $g \geq 2$ is the genus of the surface.
\end{rem}

We now turn to the case that $|\mathrm{Im}(s)| \ge 1$.   We see from the definition of $\zeta(s,z)$ and Lemma \ref{dolgo} that the following is true.

\begin{lemma}
The function $\zeta(s,z)$ is bi-analytic and non-zero for $|z|<\delta$ and $\mathrm{Re}(s) > h -\epsilon$ and $|\mathrm{Im}(s)| \geq 1$.  In particular, the logarithmic derivative $\frac{\zeta'(s,z)}{\zeta(s,z)}$ 
with respect to $s$, is bi-analytic for $|z|<\delta$ and $\mathrm{Re}(s) > h -\epsilon$ and $|\mathrm{Im}(s)| \geq 1$. 
\end{lemma}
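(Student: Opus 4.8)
The plan is to transcribe, in the two-variable setting, the argument from Section~3.4 that showed $\eta(s)$ is analytic and non-zero on $\{\mathrm{Re}(s) > h-\epsilon,\ |\mathrm{Im}(s)|\ge 1\}$, carrying the extra variable $z$ through by absorbing the geometric factor $e^{nz}$. First I would work on the expanding side and write
$$
\log\zeta(s,z) = \sum_{n=1}^\infty \frac{e^{nz}}{n}\sum_{T^n x = x} e^{-sr^n(x)},
$$
reducing the statement to the claim that this series converges locally uniformly on the stated domain. Indeed, each term is a finite sum of exponentials, hence bi-analytic, so a locally uniform limit is bi-analytic (separately analytic in $s$ and in $z$, and jointly so by Hartogs' Theorem as used above); then $\zeta(s,z) = \exp(\log\zeta(s,z))$ is bi-analytic and nowhere vanishing on this domain, and $\zeta'(s,z)/\zeta(s,z) = \partial_s\log\zeta(s,z)$ inherits bi-analyticity there.

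The substance is a bound on the inner periodic sum. Fix $\epsilon>0$ as in Lemma~\ref{dolgo} and write $s = \sigma + it$ with $\sigma > h-\epsilon$. For $|t|$ large, decompose $n = m[\log|t|] + l$ with $0\le l\le[\log|t|]-1$; Lemma~\ref{dolgo} gives $\|\mathcal{L}_s^n\|\le C\rho^{m[\log|t|]}e^{lP(-\sigma r)}$, and using $\rho^{m[\log|t|]}\le \rho^{-[\log|t|]}\rho^n$ together with $e^{lP(-\sigma r)}\le e^{[\log|t|]P(-(h-\epsilon)r)}$ (valid since $\sigma\mapsto P(-\sigma r)$ is decreasing) this becomes $\|\mathcal{L}_s^n\|\le C|t|^{\nu}\rho^n$ with $\nu = -\log\rho + P(-(h-\epsilon)r) > 0$ and $\rho<1$ uniform. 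For $|t|$ in a bounded range one instead gets $\|\mathcal{L}_s^n\|\le C\rho_1^n$ with $\rho_1<1$ from the compactness argument of Section~3.4: weak mixing of the geodesic flow forbids $\mathcal{L}_s$ from having spectral radius $\ge 1$ on $\mathrm{Re}(s)=h$, $\mathrm{Im}(s)\ne 0$, and upper semicontinuity of the spectral radius plus compactness promote this to a uniform bound after shrinking $\epsilon$ (while for $\mathrm{Re}(s)$ large the series converges trivially). Feeding these bounds into Lemma~\ref{ruellecomp} yields, locally uniformly on $\{\mathrm{Re}(s)>h-\epsilon,\ |\mathrm{Im}(s)|\ge 1\}$,
$$
\left|\sum_{T^n x = x} e^{-sr^n(x)}\right| \le C|t|\,n\,\rho_0^n + kC'(s)\,\rho^n \le C''(s)\,n\,\rho_1^n
$$
with $\rho_1 := \max(\rho_0,\rho) < 1$ uniform and $C''(s)$ locally bounded.

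Consequently $|\log\zeta(s,z)|\le C''(s)\sum_{n\ge 1}(e^{\mathrm{Re}(z)}\rho_1)^n$, which converges as soon as $\mathrm{Re}(z)<\log(1/\rho_1)$; so, with $\epsilon$ fixed, I would take $0<\delta<\log(1/\rho_1)$. On any compact subset of $\{|z|<\delta,\ \mathrm{Re}(s)>h-\epsilon,\ |\mathrm{Im}(s)|\ge 1\}$ the value $|t|$ is bounded and $\mathrm{Re}(z)$ is bounded away from $\log(1/\rho_1)$, so the series converges uniformly there, which is exactly what the first paragraph requires. The one point demanding care is the bookkeeping of the $|t|$-dependence in Dolgopyat's estimate: one must check that the polynomial-in-$|t|$ loss is harmless (it is, because only locally uniform convergence is needed), that the geometric rate $\rho_1$ and the exponent $\nu$ can be chosen uniformly over the whole half-plane so that a single $\delta$ works simultaneously, and that the small-$|t|$ range is absorbed by the compactness argument. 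Modulo that, the proof is a direct repetition of the one-variable computation already carried out for $\eta(s)$ in Section~3.
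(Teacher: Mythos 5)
Your argument is correct and is essentially the paper's: the paper asserts this lemma follows from the definition of $\zeta(s,z)$ together with Lemma \ref{dolgo}, and the estimate it then writes out for $|\log\zeta(s,z)|$ (combining Lemma \ref{ruellecomp} with the Dolgopyat operator bounds and absorbing the factor $e^{n\mathrm{Re}(z)}$ by shrinking $\delta$) is exactly the computation you carry out. Your repackaging of the block decomposition into a single bound $\|\mathcal{L}_s^n\|\le C|t|^{\nu}\rho^n$ is a clean way to see that one $\delta<\log(1/\rho)$ works uniformly, but it is the same proof.
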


It remains to prove part 2. of Proposition \ref{thmdom2}, which requires a little more work.
We take $h'$ with $h-\epsilon < h' < h$  and consider $\zeta(s,z)$ for $|z| < \delta$ and $\mathrm{Re}(s) > h'$. There exists constants $C_1, C_2 >0$ independent of $z,s$ such that
$$
\begin{aligned}
\left|\log \zeta(s,z) \right| &\leq C_1 \sum_{n=1}^\infty e^{n\mathrm{Re}(z)}  |t|   \rho_0^n
+ kC_2 \sum_{m=0}^\infty e^{m \mathrm{Re}(z)} \rho^{m[\log |t|]}
\left(\sum_{l=0}^{[\log |t|]-1} e^{l |\mathrm{Re}(z)|} e^{lP(-\sigma r)}\right) \cr
&\leq \frac{C_1e^{\mathrm{Re}(z)}\rho_0}{1-\rho_0 e^{\mathrm{Re}(z)}}|t|
+ kC_2 \left(
\frac{1}{1 - e^{|\mathrm{Re}(z)|} |t|^{-|\log \rho|}}
\right) \left( \frac{e^{[\log|t|] |\mathrm{Re}(z)|} e^{[\log |t|] P(- h' r)}-1}{e^{|\mathrm{Re}(z)|}e^{P(- h' r)} -1} \right)
\end{aligned}
$$

 In particular, we can choose $C_3,C_4>0$ independent of $s,z$ such that when $|t|$ is sufficiently large, 
$$
\left|\log \zeta(s,z) \right| \leq C_3 |t| + C_4 |t|^{|\mathrm{Re}(z)|}  |t|^{P(- h' r)}
$$
Moreover, we can choose the exponent of $|t|$ arbitrarily small by reducing $\delta$ and taking $h'$ closer to $h$. We deduce that there exists $\epsilon >0$ such that, 
$$ |\log \zeta(s,z)| = O\left(|\mathrm{Im}(s)|\right)$$
when $\mathrm{Re}(s)>h - \epsilon$, $|z|<\delta$ and furthermore that the implied error constant is independent of $s,z$.

To pass this bound to the logarithmic derivative, we use the method presented in \cite{PS-error} which requires the following result from complex analysis. The main difficultly is ensuring that our bound is uniform in $|z| < \delta$.

\begin{lemma} [\cite{ellison}]\label{inequ}
Let $y \in \mathbb{C}$. Given $R>0$ and $\epsilon>0$ suppose $F$ is analytic on the disk $\Delta = \{s=\sigma + it : |s-y| \le R(1+\epsilon)^3\}$ and that there are no zeros for $F(s)$ on the open subset
$$\{s=\sigma+it \in \mathbb{C} : |s-y| \le R(1+\epsilon)^2 \text{ and } \sigma > \mathrm{Re}(y) - R(1 + \epsilon)\}.$$
Suppose further that there exists a constant $U(y) \ge 0$ such that $\log|F(s)| \le U(y) + \log|F(y)|$ on the set $\{s= \sigma +it: |s-y| \le R(1+\epsilon^3)\}$. Then on the disk $\{s=\sigma+it : |s-y| \le R\}$,
$$ \left|\frac{F'(s)}{F(s)}\right| \le \frac{2+\epsilon}{\epsilon} \left( \left| \frac{F'(y)}{F(y)}\right| + \frac{ \left( 2+ \frac{1}{(1+\epsilon)^2} \right) (1+\epsilon)}{R\epsilon^2} \ U(y) \right).$$
\end{lemma}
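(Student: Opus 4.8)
The plan is to deduce Lemma \ref{inequ} from the Borel--Carath\'eodory theorem combined with elementary Cauchy estimates, after passing to a holomorphic branch of $\log F$; this is the classical route for bounds on logarithmic derivatives (the same device used to extract zero--free regions for $\zeta$), and the precise numerical constants are those recorded in \cite{ellison}.

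First I would check that the closed disk $\Delta_0 := \{s : |s-y| \le R(1+\epsilon^3)\}$ lies inside the zero--free set of the hypothesis. For $0<\epsilon<1$ we have $1+\epsilon^3 < 1+\epsilon \le (1+\epsilon)^2$, so $\Delta_0 \subset \{|s-y| \le R(1+\epsilon)^2\}$ and $\Delta_0$ is contained in the half--plane $\{\sigma > \mathrm{Re}(y) - R(1+\epsilon)\}$; hence $F$ has no zero on $\Delta_0$. Since $\Delta_0$ is convex, and therefore simply connected, a holomorphic branch of $\log F$ exists on it, and I would put
$$g(s) := \log F(s) - \log F(y), \qquad s\in\Delta_0,$$
so that $g$ is holomorphic on $\Delta_0$ with $g(y)=0$ and $g'(s) = F'(s)/F(s)$, while $\mathrm{Re}\, g(s) = \log|F(s)| - \log|F(y)| \le U(y)$ for all $s\in\Delta_0$ by hypothesis. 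In particular $g'' = (F'/F)'$ is holomorphic on $\Delta_0$.

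The main estimate would then proceed in two steps. Fixing an auxiliary radius $r$ with $R < r < R(1+\epsilon^3)$, I would apply the Borel--Carath\'eodory theorem to $g$ on the concentric disks of radii $R(1+\epsilon^3)$ and $r$: since $g(y)=0$ and $\mathrm{Re}\, g \le U(y)$ this yields $\sup_{|s-y|\le r}|g(s)| \le \frac{2r}{R(1+\epsilon^3)-r}\,U(y)$. Next, for $s$ in the target disk $\{|s-y|\le R\}$, integrating $(F'/F)'=g''$ along the straight segment from $y$ to $s$ gives
$$\frac{F'(s)}{F(s)} = \frac{F'(y)}{F(y)} + \int_y^s g''(w)\,dw,$$
whence $|F'(s)/F(s)| \le |F'(y)/F(y)| + R\,\sup_{|w-y|\le R}|g''(w)|$. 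Finally I would bound $|g''(w)|$ for $|w-y|\le R$ by Cauchy's inequality for the second derivative over a circle of radius $r-R$ about $w$ (which stays inside $\{|s-y|\le r\}$), obtaining $|g''(w)| \le \frac{2}{(r-R)^2}\sup_{|s-y|\le r}|g(s)|$. Substituting the Borel--Carath\'eodory bound into this chain produces an estimate of exactly the asserted shape; the precise constants $\frac{2+\epsilon}{\epsilon}$ and $\frac{(2+(1+\epsilon)^{-2})(1+\epsilon)}{R\epsilon^2}$ then result from the particular choice of nested radii and contours made in \cite{ellison}.

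The only delicate point is the bookkeeping: one must track exactly which nested disk carries each inequality --- above all that the branch of $\log F$, and with it $g$ and $g''$, is defined on the disk on which $\mathrm{Re}\, g$ is bounded, which is precisely where the no--zeros hypothesis enters --- and then optimise the free radius $r$ so that the numerical constants come out as stated. All the analytic input (the Borel--Carath\'eodory theorem, Cauchy's inequalities, and the fundamental theorem of calculus along a segment) is entirely standard; alternatively one may simply quote \cite{ellison} for the precise statement.
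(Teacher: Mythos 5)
The paper offers no proof of this lemma --- it is quoted verbatim from Ellison--Ellison --- so your proposal has to stand on its own. The strategy you outline (a branch of $\log F$, Borel--Carath\'eodory, then Cauchy estimates) is a legitimate route to bounds of this type, and you correctly identify that the zero-free hypothesis is what licenses the branch of $\log F$. The problem is quantitative, and it is not cured by ``optimising the free radius'': you confine the entire argument to the disk $|s-y|\le R(1+\epsilon^3)$, whose excess over the target radius $R$ is only $R\epsilon^3$. Borel--Carath\'eodory costs a factor of order $(\mathrm{gap})^{-1}$ and your Cauchy estimate on $g''$ costs $(\mathrm{gap})^{-2}$, so the coefficient of $U(y)$ your chain produces is of order $(R\epsilon^{9})^{-1}$ (or $(R\epsilon^{6})^{-1}$ if you estimate $g'$ directly, which is both simpler and sharper than the detour through $g''$ and the segment integral). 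No choice of $r\in(R,R(1+\epsilon^3))$ reaches the stated $(R\epsilon^{3})^{-1}$ order, so what you prove is a strictly weaker inequality than the lemma asserts; deferring ``the precise constants'' to the reference is deferring exactly the content of the statement.

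The missing idea is that the hypotheses supply far more room than you use, at the price of admitting zeros. (The radius $R(1+\epsilon^3)$ in the displayed hypothesis is surely a typo for $R(1+\epsilon)^3$, consistent with the definition of $\Delta$ and with the source.) The growth bound holds on the disk of radius $R(1+\epsilon)^3$, but the zero-free region reaches radius $R(1+\epsilon)^2$ only in the half-plane $\sigma>\mathrm{Re}(y)-R(1+\epsilon)$: zeros may occur in the left cap of $\{|s-y|\le R(1+\epsilon)^2\}$, so no global branch of $\log F$ exists on the disks one actually needs. The classical proof (Landau's method) divides out the finitely many zeros $\rho$ in that region, bounds their number by $O(U(y)/\epsilon)$ via Jensen's formula, applies Borel--Carath\'eodory to the non-vanishing quotient on the large disks, and controls $\sum_\rho\bigl(\frac{1}{s-\rho}-\frac{1}{y-\rho}\bigr)$ using that each such zero has $\mathrm{Re}(\rho)\le \mathrm{Re}(y)-R(1+\epsilon)$, hence $|s-\rho|\ge R\epsilon$ for $|s-y|\le R$. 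That is precisely where the term $\left|F'(y)/F(y)\right|$ and the prefactor $\frac{2+\epsilon}{\epsilon}$ in the conclusion come from; your argument never engages with these zeros and so cannot reproduce the stated bound. (For the application in Section 5 only uniform boundedness of the constants matters, so your weaker inequality would in fact suffice there, but it does not prove the lemma as stated.)
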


To apply this lemma we need to know the location of the zeros for $\zeta$. For our purposes, it suffices to prove the following.

\begin{lemma} \label{axis}
We can find $\epsilon, \delta >0$ such that $\zeta$ is bi-analytic and non-zero on  $|\mathrm{Re}(s) - h| < \epsilon$, $|z|<\delta$ except for simple poles at $s=\sigma(z)$.
\end{lemma}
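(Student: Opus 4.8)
The plan is to localise the analysis into the two familiar regimes and glue the conclusions with a compactness argument, exactly parallel to the one-variable case treated in Section 3 but now keeping careful track of uniformity in $z$. First I would fix attention on the strip $|\mathrm{Re}(s) - h| < \epsilon$ and split it as $\{|\mathrm{Im}(s)| < \alpha\} \cup \{|\mathrm{Im}(s)| \ge \alpha\}$ for a small $\alpha > 0$ to be chosen. On the bounded-height part, Lemma 5.8 expresses $\zeta'(s,z)/\zeta(s,z)$ as $\Psi_0(s,z)$ minus the explicit term $P'(-s\tilde r)e^{z+P(-s\tilde r)}/(1-e^{z+P(-s\tilde r)})$, which is bi-analytic away from the zeros of $1-e^{z+P(-s\tilde r)}$; by the implicit function theorem argument following (5.1), for $|z| < \delta$ the only such zero near $s = h$ occurs at $s = \sigma(z)$, and it is simple with residue $-1$ in the $\zeta'/\zeta$ normalisation. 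Hence $\zeta$ itself is non-zero there except for a simple pole at $s = \sigma(z)$, after shrinking $\epsilon, \delta, \alpha$. This handles the region $|\mathrm{Im}(s)| \le \alpha$.

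Next I would treat $|\mathrm{Im}(s)| \ge 1$ using the transfer-operator input. Lemma 5.9 already asserts that $\zeta(s,z)$ is bi-analytic and non-zero for $|z| < \delta$, $\mathrm{Re}(s) > h - \epsilon$, $|\mathrm{Im}(s)| \ge 1$: this follows because Dolgopyat's bound (Lemma 3.10) forces the spectral radius of $\mathcal L_s$ below $1$ for such $s$, so the series $\sum_n \tfrac{e^{nz}}{n}\sum_{T^nx=x} e^{-sr^n(x)}$ converges (using $|e^z| < e^{\delta}$ small and the comparison Lemma 3.11), and $\zeta$ is the exponential of a convergent, hence analytic and manifestly non-vanishing, function. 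That leaves only the intermediate band $\alpha \le |\mathrm{Im}(s)| \le 1$, $|\mathrm{Re}(s) - h| < \epsilon$. Here I would invoke the weak-mixing property of the geodesic flow, which (as recalled in Section 3 via \cite{PP2}) guarantees that the one-variable zeta function $\zeta(s) = \zeta(s,0)$ has no zeros on a neighbourhood of the line $\mathrm{Re}(s) = h$ other than the pole at $s = h$. Since non-vanishing is an open condition and $\zeta(s,z)$ depends bi-analytically on $(s,z)$ where it is defined, the zero-free region at $z = 0$ persists for $|z| < \delta$ after possibly decreasing $\delta$: a compactness argument on the compact set $\{\alpha \le |\mathrm{Im}(s)| \le 1\} \times \{|z| \le \delta/2\}$ produces a uniform $\epsilon$ on which $\zeta(s,z) \ne 0$.

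Collating the three regions gives the claim: $\zeta$ is bi-analytic and non-zero on $|\mathrm{Re}(s) - h| < \epsilon$, $|z| < \delta$, apart from the simple poles at $s = \sigma(z)$. I expect the main obstacle to be the uniformity in $z$ in the intermediate band — one must be sure that the open neighbourhood of the zero-free set at $z=0$ can be taken with an $\epsilon$ and $\delta$ independent of the particular $(s,z)$, which is why I would phrase it as a genuine compactness argument (every point of the compact slice at $z=0$ has a bi-disc neighbourhood on which $\zeta \ne 0$; extract a finite subcover; intersect) rather than a pointwise one. A secondary technical point is matching the pole structure near $s = \sigma(z)$ across the boundary $|\mathrm{Im}(s)| = \alpha$ with the expression from Lemma 5.8, but since $\sigma(z) \to h$ as $z \to 0$ and $|\mathrm{Im}(\sigma(z))|$ stays below $\alpha$ for $\delta$ small, the pole always sits in the bounded-height region and no conflict arises.
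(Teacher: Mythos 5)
Your decomposition into the three regions $|\mathrm{Im}(s)|\le\alpha$, $\alpha\le|\mathrm{Im}(s)|\le 1$ and $|\mathrm{Im}(s)|\ge 1$ is exactly the paper's, and your treatment of the first region (the explicit expression for $\zeta'/\zeta$ in terms of $1-e^{z+P(-s\tilde{r})}$ together with the implicit function theorem locating the simple pole at $s=\sigma(z)$) and of the third region (Dolgopyat's spectral bound forcing convergence of $\log\zeta(s,z)$, hence analyticity and non-vanishing) matches the paper's argument.

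The gap is in the intermediate band. You argue that non-vanishing is an open condition and that the zero-free region of $\zeta(\cdot,0)$ persists for small $|z|$ ``where $\zeta$ is defined'' --- but for $\mathrm{Re}(s)\le h$ and $z$ near $0$ the defining series for $\zeta(s,z)$ does not converge (one needs $|e^{z}|<e^{-P(-\mathrm{Re}(s)\tilde{r})}<1$ there, i.e.\ $\mathrm{Re}(z)$ strictly negative), so $\zeta(s,z)$ is not yet defined at the points you propose to cover with bi-discs. The analytic continuation of $s\mapsto\zeta(s,z)$ past the line $\mathrm{Re}(s)=h$ in this band, for $z\ne 0$, is precisely what must be produced, and perturbing function values of $\zeta(\cdot,0)$ cannot produce it. The paper therefore runs the compactness argument one level down, on the transfer operators: weak mixing of the geodesic flow gives that the spectral radius of $\widetilde{\mathcal L}_{h+it}$ is strictly less than $1$ for $t\ne 0$; by upper semi-continuity of the spectrum this persists under small perturbations in both $s$ and $z$; compactness of $\{\alpha\le|\mathrm{Im}(s)|\le 1\}$ then yields a uniform bound $\rho<1$ on the spectral radius of $\widetilde{\mathcal L}_{s,z}$ for $|z|<\delta$ and $s$ in a slightly widened strip. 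That uniform spectral bound is what simultaneously defines the continuation of $\zeta(s,z)$ (as the exponential of a convergent series) and shows it is non-zero there. Replace your function-level compactness step by this operator-level one and the proof closes; the rest of your write-up, including the observation that $\sigma(z)$ remains in the bounded-height region so no matching issue arises at $|\mathrm{Im}(s)|=\alpha$, is correct.
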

\begin{proof}
Lemma  5.4  along with our analysis of $\zeta$ for $s$ close to $h$ shows that there exists $\epsilon, \delta, \alpha>0$ for which the required conclusions hold for $\zeta(s,z)$ when $|z|<\delta$ and
$$s \in \{ s \in \mathbb{C}:  |\mathrm{Re}(s) - h| < \epsilon \text{ and } |\mathrm{Im}(s)|< \alpha\} \cup \{s \in \mathbb{C} :  |\mathrm{Re}(s) - h| < \epsilon \text{ and } |\mathrm{Im}(s)| \ge1\}.$$
Hence to conclude the proof we need study the behaviour of $\zeta(s,z)$ when $\alpha \le | \mathrm{Im}(s)| < 1.$ Specifically, we need to show that we can uniformly extend the domain of analyticity for $\zeta(s,z)$ past the line $\mathrm{Re}(s)=h$ for $|z|<\delta$ and $\alpha \le |\mathrm{Im}(s)| \le 1$ and that $\zeta$ is non-zero on this extension. \\
\indent As before, it suffices to work with our symbolic model.  It is a well known consequence of the weak mixing of the geodesic flow that that the spectral radius of $\widetilde{\mathcal L}_s$ is strictly less than $1$ when $s=h + it$ for $t \neq 0$ \cite{PP1}. This property persists under a small perturbation by upper semi-continuity of the spectrum. Using a simple compactness argument we deduce that for $\delta >0$ sufficiently small, the spectral radius of $\widetilde{\mathcal L}_{s,z}$ can be bounded above uniformly by some $0<\rho <1$ when $|z|<\delta$ and $\alpha \le |\mathrm{Im}(s)| \le 1$. As a consequence we can find $\epsilon'>0$ for which $\zeta(s,z)$ is bi-analytic on $|z|<\delta$, $s \in \{s:  |\mathrm{Re}(s) - h| < \epsilon' \text{ and } \alpha \le |\mathrm{Im}(s)| \le 1\}$ and that furthermore, $\zeta$ is non-zero on this domain.  
\end{proof}

We can now use this result to obtain bounds on the logarithmic derivative of $\zeta$. Take $h-\epsilon < h' < h$ and fix $|z|<\delta$. Set $y_z = \sigma(\mathrm{Re}(z)) + 1 + it$ and note that there exists $C>0$ for which
$$|\zeta(y_z,z)| \ge \frac{1}{|\zeta(\mathrm{Re}(y_z), \mathrm{Re}(z))|} \ge C > 0$$
uniformly in both $|t|>1$ and $z$. Hence we may choose $U(y_z)$ in the above lemma for which there exist $\widetilde{C} >0$ satisfying
$$U(y_z) \le \widetilde{C} |t|.$$
Furthermore $\widetilde{C}$ can be taken to be independent of $z,t$. Now, setting $R_z = 1 + \frac{\sigma(\mathrm{Re}(z))-h'}{2}$ we can choose $\epsilon_z>0$  satisfying $R_z(1+\epsilon_z)^3 = 1+(\sigma(\mathrm{Re}(z)) - h')$. Then, as long as $\delta$ is sufficiently small and $h'$ is close enough to $h$, $\Delta$ is contained in the half plane of analyticity for the map $s \mapsto \zeta(s,z)$. Due to Lemma \ref{axis}, we can then apply the Lemma \ref{inequ} to deduce that there exists $C'$ independent of $z$ such that
$$\left|\frac{\zeta'(s,z)}{\zeta(s,z)}\right| \le C' |\mathrm{Im}(s)|$$
when $\mathrm{Re}(s) > \frac{h'+\sigma(\mathrm{Re}(z))}{2}$. To see that $C'$ is independent of $s,z$ satisfying the above conditions note that the following properties hold,
\begin{enumerate}
\item $R_z$ and $\epsilon_z$ are uniformly bounded away from $0$ for all $|z|$ sufficiently small,\
\item for fixed $y$, the right hand side of the inequality in Lemma $3.12$ varies continuously in $\epsilon, R >0$,\
\item there exists $\nu >0$ such that $|\zeta'(y_z,z)/\zeta(y_z,z)|$ is uniformly bounded in $|z| < \nu$.
\end{enumerate}
 By reducing $\delta$ a further time, we may ensure that the above bound holds on a half plane that is independent of $z$, i.e there exists $\delta', \epsilon'>0$ such that for $|z|<\delta'$ and $ \mathrm{Re}(s)>h-\epsilon'$, the above bound holds and the constant is independent of both $z$ and $s$.\\
 
For fixed $z$ we can apply the Phragm\'en Lindel\"of Theorem (\cite{titchmarsh}, $5.65$) to the function 
$$\chi_0(s,z) = \frac{\zeta'(s,z)}{\zeta(s,z)} + \frac{1}{s-\sigma(z)}$$
to deduce that, we can find $\epsilon', \delta'>0$ such that
$$ \left|\chi_0(s,z)\right| \le C |\mathrm{Im}(s)|^{k(\mathrm{Re}(s))}$$
for some $C>0$, when $h-\epsilon' <\mathrm{Re}(s)< h+\delta'$ and $k: [h-\epsilon', h+\delta'] \to \mathbb{R}$ is the linear function that takes value $1$ at $h-\epsilon'$ and value $0$ at $h+\delta'$. Furthermore, as long as $\delta$ is sufficiently small then we can take $\epsilon', \delta'$ such that $C$ is independent of $|z| < \delta$ (i.e. the above bound holds for all $|z| <\delta$, $h - \epsilon' < \mathrm{Re}(s) < h + \delta'$) and also that $h-\epsilon' < \mathrm{Re}(\sigma(z)) < h + \delta'$ for $|z| < \delta$. More specifically, we can take
$$ C = \sup_{|z|<\delta, s \in V} \left|\chi_0(s,z)(-is)^{-k(s)}\right| < \infty$$
where $V = \{s \in \mathbb{R}: h-\epsilon' \le s \le h +\delta'\} \cup \{s: \mathrm{Re}(s)=h-\epsilon'\} \cup \{s: \mathrm{Re}(s)=h+\delta'\} $. It is then clear that, by increasing $C$ by a sufficient amount, the following bound holds uniformly for $h-\epsilon' < \mathrm{Re}(s) < h + \delta'$ with $|\mathrm{Im}(s)|\ge 1$ and $|z|<\delta$,
$$ \left|\frac{\zeta'(s,z)}{\zeta(s,z)}\right| \le C |\mathrm{Im}(s)|^{k(\mathrm{Re}(s))}.$$
Hence, by reducing $\epsilon'$ slightly, we can find $0< \xi < 1$ such that
 $$ \left|\frac{\zeta'(s,z)}{\zeta(s,z)}\right| \le C |\mathrm{Im}(s)|^{\xi}$$
uniformly for all $h-\epsilon' < \mathrm{Re}(s) < h+\delta'$, $|\mathrm{Im}(s)| \ge1$ and $|z|<\delta$. In fact, by reducing $\delta'$ we can take $\xi$ arbitrarily close to $0$. By reducing $\delta$, we may also assume that $h-\epsilon' < \mathrm{Re}(\sigma(z)) < h +\delta'$ for all $|z| < \delta$.

\subsection{Deducing Proposition \ref{thmdom2}}
We are now ready to finish our proof of Proposition \ref{thmdom2}.

\begin{proof} [Proof of Proposition \ref{thmdom2}]
We simply need to combine our work from the previous two subsections. Our study of when $s$ is close to $h$ and Lemma \ref{axis} shows that $\chi_0$ has the required analytic properties. Furthermore, these results show that we can take the logarithmic derivative. The required bounds on this derivative were proved at the end of the previous subsection.
\end{proof}

Computing the logarithmic derivative and using Lemma $2.1$ gives the following.

\begin{lemma} \label{logthm}
Let $\epsilon, \delta$ be as in Proposition \ref{thmdom2}. Then, for $\textnormal{Re}(s) > h -\epsilon$, $|z|<\delta$ we have that
$$ \frac{\zeta'(s,z)}{\zeta(s,z)} = \sum_{\gamma'} l(\gamma') e^{z|\gamma'| -sl(\gamma')} + \chi_1(s,z),$$
where $\chi_1(s,z)$ is analytic and uniformly bounded as $s,z$ vary as above. The sum over $\gamma'$ is over all (not necessarily prime) periodic orbits.
\end{lemma}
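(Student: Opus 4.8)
The plan is to differentiate $\log\zeta(s,z)$ with respect to $s$ on the region where the defining series converges absolutely, to rewrite the resulting sum over periodic points of $T$ as a sum over closed geodesics by means of Lemma \ref{codings}, and then to collect every discrepancy into a single error term $\chi_1$, which one checks to be bi-analytic and uniformly bounded on the strip.

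Concretely, for $\mathrm{Re}(s)$ large and $|z|$ small the series defining $\log\zeta(s,z)$ converges locally uniformly, so it may be differentiated term by term in $s$, giving $-\sum_{n\ge1}\frac{e^{nz}}{n}\sum_{T^nx=x}r^n(x)e^{-sr^n(x)}$. Grouping periodic points by the prime orbit they lie over — a point of period $n$ sits over a unique prime orbit of least period $m\mid n$, there are exactly $m$ such points, and $r^n$ equals $n/m$ times the primitive period along that orbit — collapses the factor $1/n$. Applying Lemma \ref{codings} to identify prime orbits of $T$ with primitive closed geodesics $\gamma$ (so $|\gamma|=m$, $l(\gamma)=r^m$), this becomes $-\sum_{\gamma\text{ prime}}\sum_{k\ge1}l(\gamma)e^{k(z|\gamma|-sl(\gamma))}$, up to the finitely many exceptions of Lemma \ref{codings}; those exceptions contribute a bi-analytic term that is bounded on $\{\mathrm{Re}(s)>h-\epsilon,\ |z|<\delta\}$ (the relevant denominators $1-z^{n_j}e^{-s\ell_j}$ staying bounded away from $0$ since $|z|<\delta<1$ and $\mathrm{Re}(s)>h-\epsilon>0$), and I absorb this into $\chi_1$.

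It then remains only to compare this sum over prime orbits with the sum over all (not necessarily prime) orbits, which equals $\sum_{\gamma'}l(\gamma')e^{z|\gamma'|-sl(\gamma')}=\sum_{\gamma\text{ prime}}\sum_{k\ge1}k\,l(\gamma)e^{k(z|\gamma|-sl(\gamma))}$. The difference is $\sum_{\gamma\text{ prime}}\sum_{k\ge2}(k-1)l(\gamma)e^{k(z|\gamma|-sl(\gamma))}$; using $|\gamma|\le C\,l(\gamma)$ together with the counting bound $\#\{\gamma:l(\gamma)\le T\}=O(e^{hT})$, this tail converges absolutely and uniformly — hence is bi-analytic and uniformly bounded — on $\{\mathrm{Re}(s)>h-\epsilon,\ |z|<\delta\}$ as soon as $\epsilon,\delta$ are small enough that $\mathrm{Re}(s)-C|\mathrm{Re}(z)|$ stays above $h/2$ there, i.e.\ $\epsilon+C\delta<h/2$. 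Absorbing this remainder into $\chi_1$ as well gives the stated identity for $\mathrm{Re}(s)>h$, with $\chi_1$ the explicit sum of the two correction pieces and therefore bi-analytic and uniformly bounded on $\{\mathrm{Re}(s)>h-\epsilon,\ |z|<\delta\}$. Since $\zeta'(s,z)/\zeta(s,z)$ extends to that region by Proposition \ref{thmdom2}, meromorphically and with its sole pole at $s=\sigma(z)$, the identity continues to hold throughout $\mathrm{Re}(s)>h-\epsilon$ once the geodesic sum on the right is read as the analytic continuation of its generalized Dirichlet series, whose only singularity there — a simple pole at $\sigma(z)$ — then matches that of $\zeta'/\zeta$.

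The step I would expect to be the only real obstacle — and even it is routine — is securing the \emph{uniformity} in $(s,z)$ of the bound on $\chi_1$ across the entire strip: one must fix $\epsilon$ and $\delta$, depending only on $h$ and the comparison constant $C$, so that the abscissa of convergence of the $k\ge2$ tail lies strictly to the left of $h-\epsilon$ uniformly for $|z|<\delta$. This uniform convergence strictly past the line $\mathrm{Re}(s)=h$ is precisely what keeps $\chi_1$ genuinely bounded, in contrast with $\zeta'/\zeta$ itself, which on that strip only obeys the polynomial bound $O(|\mathrm{Im}(s)|^{\xi})$ of Proposition \ref{thmdom2}.
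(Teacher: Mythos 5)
Your argument is correct and is precisely the paper's (one-line) proof fleshed out: the paper merely asserts that the lemma follows from computing the logarithmic derivative and invoking Lemma \ref{codings}, and you supply exactly the three ingredients this requires — the term-by-term differentiation with regrouping of periodic points over prime orbits (where the factor $1/n$ cancels against the orbit multiplicity), the finitely many coding exceptions, and the uniformly convergent $k\ge 2$ tail with the correct smallness condition $\epsilon+C\delta<h/2$. The one quibble is a sign: your computation correctly produces $\zeta'(s,z)/\zeta(s,z)=-\sum_{\gamma'}l(\gamma')e^{z|\gamma'|-sl(\gamma')}+\chi_1(s,z)$, and the missing minus sign in the statement is a typo of the paper's (consistent with its later use of $-\zeta'/\zeta$ in the Perron integral), so you should flag it rather than silently conform to it.
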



\section{Asymptotic results}

We can now convert the analytic properties  of $\zeta$ from Proposition \ref{thmdom2}  into our asymptotic 
results. To do so we use the  Effective Perron Formula to study a certain moment generating function. This  will provide us with the hypotheses for the aforementioned Hwang Quasi-power Theorem. We want to find an asymptotic formula for the following moment generating function.
\begin{definition}
For $z \in \mathbb C$ and $T > 0$ let
$C_z(T) = \sum_{l(\gamma) < T} e^{z|\gamma|}$.
\end{definition}

\subsection{The Perron Formula revisited}
Our aim is to understand the growth of $C_z(T)$ as $T\to\infty$ for all $z$ close to zero.
To do this we want to prove a generalisation of Lemma \ref{qp1} that introduces a dependence on the variable $z$. The main technical difficulty is dealing with additional complex perturbations $e^{nz}$ which can contribute the growth of the sum in the logarithmic derivative of $\zeta$. 
In contrast to the approaches in \cite{vivandval} and \cite{lee}, 
we  study how the real part of the pole $\sigma(z)$ varies for $|z| < \delta$ which will in turn allows us to effectively apply Lemma \ref{perron}. 

\begin{lemma} \label{pressinequ}
We have that
$$\mathrm{Re}(\sigma(z)) \le \sigma(\mathrm{Re}(z)),$$
for all $|z|<\delta$.
\end{lemma}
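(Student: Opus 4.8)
The plan is to exploit the defining relation $(5.1)$, namely $z + P(-\sigma(z)\tilde r) = 0$, together with the variational/convexity properties of the pressure function already recorded in Lemmas 5.3 and 5.5. Writing $z = x + iy$ with $x = \mathrm{Re}(z)$, we have on the one hand $P(-\sigma(x)\tilde r) = -x$ (the real implicit equation), and on the other hand $P(-\sigma(z)\tilde r) = -z$, so taking real parts gives $\mathrm{Re}\, P(-\sigma(z)\tilde r) = -x = P(-\sigma(x)\tilde r)$. Thus it suffices to show that the real-valued, strictly decreasing function $\sigma \mapsto P(-\sigma\tilde r)$ on the real axis, evaluated at $\mathrm{Re}(\sigma(z))$, is at least $\mathrm{Re}\, P(-\sigma(z)\tilde r)$; combined with strict monotonicity this yields $\mathrm{Re}(\sigma(z)) \le \sigma(x)$. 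Concretely, the key inequality to establish is
$$ P\bigl(-\mathrm{Re}(\sigma(z))\,\tilde r\bigr) \ge \mathrm{Re}\, P\bigl(-\sigma(z)\,\tilde r\bigr). $$

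First I would set up the standard domination: for the transfer operator $\widetilde{\mathcal L}_{-\sigma(z)\tilde r}$ (equivalently, at the level of periodic-point sums) one has the pointwise bound $|e^{-\sigma(z)\tilde r^n(x)}| = e^{-\mathrm{Re}(\sigma(z))\tilde r^n(x)}$, since $\tilde r$ is real-valued. Hence the spectral radius of the complex operator $\widetilde{\mathcal L}_{-\sigma(z)\tilde r}$ is bounded above by that of the positive operator $\widetilde{\mathcal L}_{-\mathrm{Re}(\sigma(z))\tilde r}$, which by the Ruelle Operator Theorem (Lemma 3.3) equals $e^{P(-\mathrm{Re}(\sigma(z))\tilde r)}$. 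Meanwhile, near $s = h$ the eigenvalue $e^{P(-s\tilde r)}$ of $\widetilde{\mathcal L}_{-s\tilde r}$ depends analytically on $s$ (Lemma 3.5), and for complex $s$ it is the analytic continuation of the leading eigenvalue; its modulus is at most the spectral radius, so $|e^{P(-\sigma(z)\tilde r)}| \le e^{P(-\mathrm{Re}(\sigma(z))\tilde r)}$, i.e. $\mathrm{Re}\, P(-\sigma(z)\tilde r) \le P(-\mathrm{Re}(\sigma(z))\tilde r)$. This is exactly the displayed inequality above.

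To finish, combine this with $\mathrm{Re}\, P(-\sigma(z)\tilde r) = -x = P(-\sigma(x)\tilde r)$: we get $P(-\sigma(x)\tilde r) \le P(-\mathrm{Re}(\sigma(z))\tilde r)$. Since $\frac{d}{ds}P(-s\tilde r) = -\int \tilde r\, d\mu_{-s\tilde r} < 0$ (Lemma 5.3), the map $s \mapsto P(-s\tilde r)$ is strictly decreasing on the relevant real interval (for $\delta$ small, both $\sigma(x)$ and $\mathrm{Re}(\sigma(z))$ lie in a neighbourhood of $h$ where this holds), so the inequality of pressures reverses under the inverse: $\mathrm{Re}(\sigma(z)) \le \sigma(x) = \sigma(\mathrm{Re}(z))$, as claimed. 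The main obstacle, and the step requiring the most care, is the second one: justifying rigorously that the analytically continued eigenvalue $e^{P(-\sigma(z)\tilde r)}$ is genuinely bounded in modulus by the spectral radius of the dominating positive operator — one must check that $\sigma(z)$ stays in the perturbative regime $|s - h| < \delta$ where Lemma 3.5 applies (guaranteed by shrinking $\delta$, using continuity of $\sigma$ and $\sigma(0) = h$), and invoke the elementary fact that any eigenvalue of an operator is bounded in modulus by the operator norm, hence by the spectral radius of the modulus-dominating operator via Wielandt-type comparison for the iterates $\widetilde{\mathcal L}^n$. Everything else is bookkeeping with the identity $(5.1)$ and the sign of $P'$.
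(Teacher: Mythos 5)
Your proof is correct and takes essentially the same route as the paper's: both arguments rest on taking real parts of the implicit equation $(5.1)$, the domination inequality $\mathrm{Re}\,P(-s\tilde r)\le P(-\mathrm{Re}(s)\tilde r)$, and the strict monotonicity of the real pressure function. The paper merely phrases this as a one-line proof by contradiction and uses the domination inequality without comment, whereas you state it explicitly and justify it via spectral comparison of the transfer operators.
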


\begin{proof}
Otherwise, if $z$ does not satisfy this,
$$ \mathrm{Re}(\sigma(z)) > \sigma(\mathrm{Re}(z))$$
and, using $(5.1)$ and the fact that $z \mapsto P(-zr)$ for $z \in \mathbb{R}$ is a decreasing real valued function,
$$-\mathrm{Re}(z) = \mathrm{Re}(P(-\sigma(z)\tilde{r})) \le P(-\mathrm{Re}(\sigma(z))\tilde{r}) <  P(-\sigma(\mathrm{Re}(z))\tilde{r}) = -\mathrm{Re}(z),$$
a contradiction.
\end{proof}

 Using this result along with Lemma \ref{perron}, we deduce the following.

\begin{lemma} \label{qp}
There exists $\alpha >0$ such that
$$\sideset{}{'}\sum_{l(\gamma') \le T}l(\gamma') e^{z|\gamma'|} = \frac{e^{\sigma(z) T}}{\sigma(z)}  \left( 1 + O\left(e^{-\alpha T} \right) \right)$$
uniformly in $z$ and $T$ as $T\to\infty$.
\end{lemma}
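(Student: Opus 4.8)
\textbf{Proof plan for Lemma \ref{qp}.}

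The plan is to mimic the proof of Lemma \ref{qp1}, applying the Effective Perron Formula (Lemma \ref{perron}) to the Dirichlet-type series $\frac{\zeta'(s,z)}{\zeta(s,z)}$ rather than to $\eta(s)$, but now carrying the extra parameter $z$ through every estimate and insisting that all implied constants are uniform for $|z| < \delta$. By Lemma \ref{logthm} we have $\frac{\zeta'(s,z)}{\zeta(s,z)} = \sum_{\gamma'} l(\gamma') e^{z|\gamma'| - s l(\gamma')} + \chi_1(s,z)$ with $\chi_1$ analytic and uniformly bounded; this identifies the coefficients of the relevant series as $l(\gamma') e^{z|\gamma'|}$. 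First I would fix $d = h + T^{-1}$ (which, after shrinking $\delta$ if needed, lies to the right of $\mathrm{Re}(\sigma(z))$ for all $|z|<\delta$), $\epsilon = e^{-xT}$, $R = e^{yT}$ for constants $x,y>0$ to be chosen, and write
$$\sideset{}{'}\sum_{l(\gamma') \le T} l(\gamma') e^{z|\gamma'|} = \frac{1}{2\pi i}\int_{d-iR}^{d+iR} \frac{\zeta'(s,z)}{\zeta(s,z)} \frac{e^{sT}}{s}\, ds + (\text{error terms}),$$
where the error terms are the usual Perron tails $\sum_{\gamma'} O\!\left( \frac{l(\gamma') e^{z|\gamma'|} e^{d(T - l(\gamma'))}}{1 + R|T - l(\gamma')|}\right)$ plus the contribution of the analytic piece $\chi_1$, which is negligible. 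As in Lemma \ref{qp1}, split the tail into $|T - l(\gamma')| \le \epsilon$ and $|T - l(\gamma')| > \epsilon$; the near-diagonal part is controlled using $\#\{\gamma' : |l(\gamma') - T| \le e^{-\alpha T}\} = O(e^{(h-\alpha)T})$ and the Milnor bound $|\gamma'| < C\, l(\gamma')$ from (1.2), so $e^{z|\gamma'|} = O(e^{C|z| T})$; choosing $x$ small and $\delta$ small makes this $o(e^{\sigma(z)T})$ uniformly. The far part is bounded by $\frac{e^{dT}}{R\epsilon}\bigl(\sum_{\gamma'} l(\gamma') e^{\mathrm{Re}(z)|\gamma'| - d l(\gamma')}\bigr) = O\!\left(\frac{e^{dT}}{R\epsilon}\bigl|\frac{\zeta'(d,z)}{\zeta(d,z)}\bigr| + O(1)\right) = O\!\left(\frac{T e^{dT}}{R\epsilon}\right)$, using that near the pole $\frac{\zeta'(d,z)}{\zeta(d,z)} = O(T)$ uniformly (since $d - \sigma(z)$ has real part of order $T^{-1}$ by Lemma \ref{pressinequ}, and the analytic remainder $\chi_0$ is uniformly bounded).

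For the main term, I would shift the contour to a rectangle $\Gamma$ with vertices $d \pm iR$, $c \pm iR$ for some fixed $c$ with $h - \epsilon < c < h$ such that $\frac{\zeta'(s,z)}{\zeta(s,z)}$ is analytic on $\mathrm{Re}(s) > c$ for all $|z|<\delta$ except for the simple pole at $s = \sigma(z)$ (possible by Proposition \ref{thmdom2} and Lemma \ref{axis}, after shrinking $\delta$ so that $\mathrm{Re}(\sigma(z)) > c$ throughout). The residue at $s = \sigma(z)$, recalling the pole has residue $-1$, contributes exactly $-(-1)\frac{e^{\sigma(z)T}}{\sigma(z)} = \frac{e^{\sigma(z)T}}{\sigma(z)}$. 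The horizontal sides contribute $O(R^{\xi - 1} e^{dT})$ and the left side $O(R^{\xi} e^{cT})$, using part (2) of Proposition \ref{thmdom2} with the uniform constant $M$ and $0 < \xi < 1$. Choosing $y$ so that $R^\xi e^{cT}$ is strictly exponentially smaller than $e^{\sigma(z)T}$ (which requires $c < \mathrm{Re}(\sigma(z))$ uniformly, guaranteed by shrinking $\delta$), and then $x$ so that $\frac{T e^{dT}}{R\epsilon}$ is also strictly exponentially smaller than $e^{\sigma(z)T}$, collects everything into $\frac{e^{\sigma(z)T}}{\sigma(z)}(1 + O(e^{-\alpha T}))$ for some $\alpha > 0$ independent of $z$.

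The main obstacle — and the reason this is not a verbatim copy of Lemma \ref{qp1} — is the uniformity in $z$ at two specific points. First, the near-diagonal Perron tail involves $e^{z|\gamma'|}$, which can be as large as $e^{C|z|T}$; one must check that, after fixing the exponential decay rate $\alpha$ dictated by the prime geodesic theorem error, the quantity $C|z|$ can be absorbed by taking $\delta$ small, and that this choice is compatible with the choices of $x, y$ needed elsewhere. Second, and more delicate, one needs the distance from the shift contour $\mathrm{Re}(s) = c$ and the vertical line $\mathrm{Re}(s) = d$ to the pole $\sigma(z)$ to be controlled \emph{uniformly} in $z$: here Lemma \ref{pressinequ} ($\mathrm{Re}(\sigma(z)) \le \sigma(\mathrm{Re}(z))$) together with continuity of $\sigma$ and $\sigma(0) = h$ is exactly what pins $\mathrm{Re}(\sigma(z))$ into a narrow band around $h$, so that $c < \mathrm{Re}(\sigma(z)) < d$ with gaps of the right order, and the residue $\frac{e^{\sigma(z)T}}{\sigma(z)}$ has denominator bounded away from zero. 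Once these uniformities are in hand, the remaining estimates are exactly those of Lemma \ref{qp1} carried through with a parameter, and the uniform boundedness of $\chi_0$ and $\chi_1$ from Proposition \ref{thmdom2} and Lemma \ref{logthm} does the rest.
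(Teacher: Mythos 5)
Your overall strategy is the paper's: apply the Effective Perron Formula to $-\zeta'(s,z)/\zeta(s,z)$, split the tail at $|T-l(\gamma')|=\epsilon$, shift the contour to $\mathrm{Re}(s)=c$, collect the residue at $s=\sigma(z)$, and use Lemma \ref{pressinequ} plus continuity of $\sigma$ to make every estimate uniform in $|z|<\delta$. However, there is one genuine error: your choice of the vertical line $d = h + T^{-1}$. You assert that after shrinking $\delta$ this lies to the right of $\mathrm{Re}(\sigma(z))$ for all $|z|<\delta$, but this cannot be arranged. For real $z>0$ one has $\sigma(z)=\sigma(\mathrm{Re}(z))>h$ (since $\sigma'(0)=A>0$), so for any fixed $\delta>0$ and, say, $z=\delta/2$, the pole satisfies $\sigma(z) > h + T^{-1}$ once $T$ is large. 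Two things then break: the abscissa of absolute convergence of $\sum_{\gamma'} l(\gamma')e^{z|\gamma'|-s l(\gamma')}$ is $\sigma(\mathrm{Re}(z))$, so your far-tail bound $\frac{e^{dT}}{R\epsilon}\sum_{\gamma'} l(\gamma')e^{\mathrm{Re}(z)|\gamma'|-d l(\gamma')}$ involves a divergent series (and Perron's formula itself is not applicable on that line); and the pole $\sigma(z)$ may lie to the \emph{right} of your rectangle, so the Residue Theorem would not produce the main term at all.

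The paper avoids this by taking $d = \sigma(\mathrm{Re}(z)) + T^{-1}$, i.e.\ a $z$-dependent line sitting just to the right of the abscissa of convergence. Lemma \ref{pressinequ} then guarantees $\mathrm{Re}(\sigma(z)) \le \sigma(\mathrm{Re}(z)) < d$, so the pole is always enclosed when the contour is shifted to $\mathrm{Re}(s)=c$, and the quantity $\zeta'(d,\mathrm{Re}(z))/\zeta(d,\mathrm{Re}(z)) = O(T)$ controlling the far tail makes sense. With this single correction (and the attendant bookkeeping: the horizontal-side bound becomes $O(R^{\xi-1}e^{dT})$ with this new $d$, and one uses the continuity of $z\mapsto \mathrm{Re}(\sigma(z))-\sigma(\mathrm{Re}(z))$ at $0$ to ensure $R^{\xi-1}e^{dT}$ is still exponentially smaller than $e^{\mathrm{Re}(\sigma(z))T}$ after shrinking $\delta$), the rest of your argument goes through exactly as in the paper.
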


\begin{proof}
Set $d = \sigma(\mathrm{Re}(z)) + T^{-1}$, $\epsilon =e^{-xT}$ and $R= e^{yT}$  (for $x,y >0$ to be chosen later). Using Perron's Formula gives that
$$\sideset{}{'}\sum_{l(\gamma') \le T} l(\gamma') e^{z|\gamma'|} = \frac{1}{2\pi i} \int_{d-iR}^{d+iR} \left(- \frac{\zeta'(s,z)}{\zeta(s,z)}\right) \frac{e^{sT}}{s}  ds + \sum_{\gamma'} O\left( \frac{l(\gamma') e^{\mathrm{Re}(z) |\gamma'|} e^{d(T-l(\gamma'))}}{1+R|T-l(\gamma')|}  \right).$$
Now note that, since $\pi(T) = \textnormal{li}(e^{hT}) + O(e^{h'T})$ for some $h' < h$ \cite{PS-error}, if $\alpha \in \mathbb{R}$ satisfies $0<\alpha<h-h'$, then
$$\#\{ \gamma: |l(\gamma) - T| \le e^{-\alpha T}\} = O(e^{(h-\alpha)T}).$$
Hence, if $0<x<h-h'$ and $|T-l(\gamma')|<\epsilon$, then $e^{d(T-l(\gamma'))} = O(1)$ and so
$$\sum_{ |T-l(\gamma')|\le \epsilon}   \frac{ l(\gamma') e^{\mathrm{Re}(z)|\gamma|} e^{d(T-l(\gamma'))}}{1+R|T-l(\gamma')|} = O\left(T e^{\mathrm{Re}(z) c(T+\epsilon)}  \epsilon e^{hT} \right),$$
where we have used that $|\gamma'|<c l(\gamma')$ for some $c>0$. Hence the above is
$$O\left( T \epsilon e^{T(h + \mathrm{Re}(z)c)} \right).$$
By Lemma \ref{logthm},
$$ \sum_{ |T-l(\gamma')|> \epsilon}   \frac{ l(\gamma') e^{\mathrm{Re}(z)|\gamma'|} e^{d(T-l(\gamma'))}}{1+R|T-l(\gamma')|} \le \frac{e^{dT}}{R\epsilon} \sum_{\gamma'} l(\gamma')  e^{\mathrm{Re}(z)|\gamma'|}e^{-dl(\gamma')} = \frac{e^{dT}}{R\epsilon} \left( \frac{\zeta'(d,\mathrm{Re}(z))}{\zeta(d,\mathrm{Re}(z))} +O(1)\right).$$
Then, using Proposition $5.2$, $\frac{\zeta'(d,\mathrm{Re}(z))}{\zeta(d,\mathrm{Re}(z))} = O(T)$ and so the above is
$$O\left( \frac{ T e^{dT}}{R\epsilon}\right).$$

Now fix $c<h$ such that $\chi_0(s,z)$ (from Proposition $5.2$) is bi-analytic in $\mathrm{Re}(s) > c$, $|z|<\delta$. By reducing $\delta$ if necessary we may assume that $\mathrm{Re}(\sigma(z)) > c$ for $|z|<\delta$. We integrate around the rectangle $\Gamma$ with vertices $d-iR, c-iR, c+iR,d+iR$. By the Residue Theorem (by Lemma \ref{pressinequ} we pick up the residue)
$$\frac{1}{2\pi i} \int_\Gamma \left(- \frac{\zeta'(s,z)}{\zeta(s,z)}\right) \frac{e^{sT}}{s} ds = \frac{e^{\sigma(z)T}}{\sigma(z)}.$$
Our bounds on  $\eta$ guarantee that
$$\int_{c \pm iR}^{d \pm iR} \left(- \frac{\zeta'(s,z)}{\zeta(s,z)}\right) \frac{e^{sT}}{s} ds = O(R^{\xi-1} e^{dT})$$
and
$$\int_{c-iR}^{c+iR} \left(- \frac{\zeta'(s,z)}{\zeta(s,z)}\right) \frac{e^{sT}}{s} ds = O(R^\xi e^{cT}).$$
All of the above big O bounds can be chosen to be uniform in $|z|<\delta$. We now choose $y$ such that $R^\xi e^{cT}$ grows strictly exponentially slower that $e^{\sigma(z)T}$ uniformly for $|z| < \delta$. Now note that the continuous function $z \mapsto\mathrm{Re}(\sigma(z)) - \sigma(\mathrm{Re}(z))$ fixes $0$. Hence, by reducing $\delta$ we can assume that $R^{\xi-1}e^{dT}$ grows strictly exponentially slower than $e^{\sigma(z)T}$ uniformly for $|z| < \delta$ too. We then  choose $0 < x < h-h'$ so that $ T e^{dT}/R\epsilon$ grows strictly exponentially slower than $e^{\sigma(z)T}$ again uniformly in $|z| < \delta$. Reducing $\delta$ (if necessary) a final time guarantees that $T \epsilon e^{T(h + \mathrm{Re}(z)c)}$ grows in the required way also.
\end{proof}

A standard argument 
again shows the same is true if we sum over prime periodic orbits and  it is easy to remove the terms corresponding to $l(\gamma) = T$ from our asymptotic expression. We deduce that there exists  some $\alpha>0$ for which

\begin{equation*}
 \sum_{l(\gamma ) < T}    l(\gamma) e^{z|\gamma|} =  \frac{ e^{\sigma(z) T}}{\sigma(z)} \left( 1 + O\left( e^{-\alpha T} \right) \right).
\end{equation*}
We now need to remove the $ l(\gamma)$ term from the above expression.

\begin{lemma}
We  have that
$$ C_z(T) =  \frac{ e^{\sigma(z) T}}{\sigma(z) T} \left( 1 + O\left( \frac{1}{T} \right) \right),$$
where the implied error term is independent of $z$.
\end{lemma}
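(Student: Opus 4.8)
The plan is to deduce this from the preceding lemma by partial summation, exactly in the spirit of the proof of Corollary 1.3. Write $\psi_z(u) = \sum_{l(\gamma) < u} l(\gamma)\, e^{z|\gamma|}$, so that the preceding lemma (upgraded to prime orbits and with the $l(\gamma) = T$ terms removed) gives $\psi_z(u) = \frac{e^{\sigma(z)u}}{\sigma(z)}\bigl(1 + O(e^{-\alpha u})\bigr)$ as $u \to \infty$, uniformly in $|z| < \delta$. Since $e^{z|\gamma|} = l(\gamma)^{-1}\cdot l(\gamma)\, e^{z|\gamma|}$, fixing a constant $c>0$ strictly below the systole of $V$ (so that $\psi_z(c) = 0$), partial summation yields
$$C_z(T) = \frac{\psi_z(T)}{T} + \int_c^T \frac{\psi_z(u)}{u^2}\, du.$$

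First I would handle the boundary term: the preceding lemma gives $\frac{\psi_z(T)}{T} = \frac{e^{\sigma(z)T}}{\sigma(z)T} + O\!\left(\frac{e^{(\mathrm{Re}\,\sigma(z)-\alpha)T}}{T}\right)$, and the error here is already of the shape $O\!\left(\frac{e^{\mathrm{Re}\,\sigma(z)T}}{T^2}\right)$ (in fact far smaller, since $Te^{-\alpha T}$ is bounded). For the integral term I would split $\int_c^T = \int_c^{T_0} + \int_{T_0}^{T}$ with a fixed large $T_0$; over $[c, T_0]$ the integrand is bounded uniformly in $z$, contributing $O(1) = O\!\left(\frac{e^{\mathrm{Re}\,\sigma(z)T}}{T^2}\right)$. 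Over $[T_0, T]$ I substitute the asymptotic for $\psi_z$, obtaining a main term $\frac{1}{\sigma(z)}\int_{T_0}^T \frac{e^{\sigma(z)u}}{u^2}\, du$ together with an error $O\!\left(\int_{T_0}^T \frac{e^{(\mathrm{Re}\,\sigma(z)-\alpha)u}}{u^2}\, du\right)$. A two–piece estimate — splitting the range at $T/2$, bounding $u^{-2} \le 4T^{-2}$ on $[T/2, T]$ and $e^{\mathrm{Re}\,\sigma(z)u} \le e^{\mathrm{Re}\,\sigma(z)T/2}$ on $[T_0, T/2]$ — shows $\int_{T_0}^T \frac{e^{\sigma(z)u}}{u^2}\, du = O\!\left(\frac{e^{\mathrm{Re}\,\sigma(z)T}}{T^2}\right)$, and the same device bounds the error integral by $O\!\left(e^{(\mathrm{Re}\,\sigma(z)-\alpha)T}\right)$ when that exponent is positive (and by $O(\log T)$ or $O(1)$ otherwise), which in all cases is $O\!\left(\frac{e^{\mathrm{Re}\,\sigma(z)T}}{T^2}\right)$.

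Collecting the pieces gives $C_z(T) = \frac{e^{\sigma(z)T}}{\sigma(z)T} + O\!\left(\frac{e^{\mathrm{Re}\,\sigma(z)T}}{T^2}\right)$. Since $\sigma$ is analytic on $|z| < \delta$ with $\sigma(0) = h \neq 0$, after shrinking $\delta$ we may assume that $|\sigma(z)|$ is bounded above and away from $0$ and that $\mathrm{Re}\,\sigma(z)$ is bounded, all uniformly for $|z| < \delta$; this lets us rewrite the error as $\frac{e^{\sigma(z)T}}{\sigma(z)T}\cdot O(1/T)$ and gives the claimed form. Moreover every implied constant depends only on $\alpha$, $c$, $T_0$ and the stated uniform bounds on $\sigma$, hence is independent of $z$, because the input estimate for $\psi_z$ is itself uniform in $z$. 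There is no genuine obstacle here; the one point needing care is precisely this uniformity — one must verify that the exponent $\mathrm{Re}\,\sigma(z) - \alpha$, even when still positive for small $\alpha$, makes the error contributions exponentially (or at worst polynomially) smaller than the principal term $e^{\sigma(z)T}/(\sigma(z)T)$, uniformly over the disc $|z| < \delta$.
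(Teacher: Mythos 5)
Your proof is correct and follows essentially the same route as the paper: partial summation against $\varphi_z(u)=\sum_{l(\gamma)<u} l(\gamma)e^{z|\gamma|}$, insertion of the asymptotic from the preceding lemma, and an elementary estimate showing the integral term is $O\bigl(e^{\mathrm{Re}\,\sigma(z)T}/T^2\bigr)$ uniformly in $|z|<\delta$. You have simply written out the ``simple calculation'' and the uniformity checks that the paper leaves implicit.
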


\begin{proof}
Let 
$$\varphi_z(u) = \sum_{l(\gamma) < u} l(\gamma) e^{z|\gamma|}.$$
Then, 
$$C_z(T) = \int_0^T \frac{1}{u} \ d\varphi_z(u) = \frac{\varphi_z(T)}{T} + \int_0^T \frac{\varphi_z(u)}{u^2} \ du.$$
From our asymptotic expression for $\varphi_z$, we have that
$$\left| \int_0^T \frac{\varphi_z(u)}{u^2} \ du \right| \le \int_0^T \frac{|\varphi_z(u)|}{u^2} \ du = \int_0^T \frac{e^{\mathrm{Re}(\sigma(z))u}}{|\sigma(z)|u^2} \ du + O(e^{(\sigma(z)-\alpha')T}),$$
for some $\alpha' >0$. The conclusion then follows from a simple calculation.
\end{proof}

\subsection{Hwang's Quasi-Power Theorem}
We can now deduce Theorems $1.4$ and $1.5$ using the the Hwang Quasi-Power Theorem. This result allows us 
to  convert our estimate for $C_z(T)$ into estimates on the average, variance and a Central Limit Theorem for the distributions on the positive real numbers associated to
$$
X_T:=  \frac{1}{\pi(T)}\sum_{l(\gamma) < T} \delta_{|\gamma|}.
$$
An excellent account of this theory appears in (\cite{fs}, \S IX.5). 
\begin{lemma}[Hwang's Theorem \cite{hwang1}, \cite{hwang2}]
Assume that 
	$$
\frac{C_{z}(T)}{C_0(T)}   = \exp \left( \beta_T U(z) + V(z)\right) \left( 1 + O(1/\kappa_T) \right)
	$$
	with $\beta_T, \kappa_T \to \infty$ as $T \to \infty$, 
	where 
	$U$ and $V$ are analytic in a neighbourhood of $z=0$.  Then  
	$$
	\mathbb E \left(X_T\right) = \beta_T U'(1) + V'(1) +   O(1/\kappa_T)
	\hbox{ and } 
	\mathbb V \left(X_T\right) = \beta_T U''(1) + V''(1) +   O(1/\kappa_T)
	$$
	and $X_T$ is asymptotically Gaussian with error term $O(\kappa_T^{-1}+ \beta_T^{-1/2})$.
\end{lemma}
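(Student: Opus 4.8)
The plan is to reconstruct the standard argument behind Hwang's quasi-power theorem (see also the account in \cite{fs}, \S IX.5). Throughout I would work with the normalised Laplace transform
$$
M_T(z) := \frac{C_z(T)}{C_0(T)} = \frac{1}{\pi(T)}\sum_{l(\gamma)<T} e^{z|\gamma|} = \mathbb E\big(e^{zX_T}\big),
$$
since $C_0(T)=\pi(T)$. By hypothesis $M_T(z) = \exp(\beta_T U(z)+V(z))(1+O(1/\kappa_T))$ uniformly for $z$ in a fixed complex disc $|z|<\rho$ about the origin. Evaluating at $z=0$ gives $1 = \exp(\beta_T U(0)+V(0))(1+O(1/\kappa_T))$, which, since $\beta_T\to\infty$, forces $U(0)=0$ and then $V(0)=0$. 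Hence $\Lambda_T(z):=\log M_T(z) = \beta_T U(z)+V(z)+E_T(z)$, where $E_T$ is analytic on $|z|<\rho$, $E_T(0)=0$, and $\sup_{|z|<\rho'}|E_T(z)| = O(1/\kappa_T)$ for every fixed $\rho'<\rho$.

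First I would extract the moments. By Cauchy's estimates on a slightly smaller disc, $E_T^{(k)}(0)=O(1/\kappa_T)$ for each fixed $k$, while $U,V$ have bounded derivatives there. Since $\Lambda_T$ is the cumulant generating function of $X_T$ and $\Lambda_T(0)=0$, we have $\mathbb E(X_T)=\Lambda_T'(0)$ and $\mathbb V(X_T)=\Lambda_T''(0)$; differentiating the decomposition at $z=0$ therefore yields $\mathbb E(X_T)=\beta_T U'(0)+V'(0)+O(1/\kappa_T)$ and $\mathbb V(X_T)=\beta_T U''(0)+V''(0)+O(1/\kappa_T)$, which are the first two asserted estimates. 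In the present application $U''(0)>0$ by the strict convexity of the pressure established above (this is the positivity of the variance $\sigma^2$), so the variance genuinely grows like $\beta_T$; this non-degeneracy is what the central limit statement needs.

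For the central limit theorem with a Berry--Esseen rate I would invoke the Esseen smoothing inequality. Put $a_T=\beta_T U'(0)$, $b_T=\sqrt{\beta_T U''(0)}$, let $X_T^\ast=(X_T-a_T)/b_T$, write $F_T$ for its distribution function and $\Phi$ for the standard normal one. Esseen's inequality gives, for any $U_0>0$,
$$
\sup_x\,|F_T(x)-\Phi(x)| \;\le\; \frac1\pi\int_{-U_0}^{U_0}\left|\frac{\widehat F_T(t)-e^{-t^2/2}}{t}\right|dt \;+\; \frac{c}{U_0},
$$
and I would take $U_0=\tau b_T$ for a small fixed $\tau<\rho$, so the tail term is $O(1/b_T)=O(\beta_T^{-1/2})$. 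For $|t|\le\tau b_T$ the argument $it/b_T$ lies in the disc of validity and $\widehat F_T(t)=\exp(\Lambda_T(it/b_T)-it\,a_T/b_T)$. Taylor-expanding $U$ to third order and using $b_T^2=\beta_T U''(0)$ exactly, the quadratic term is precisely $-t^2/2$, the linear term cancels against $-it\,a_T/b_T$, and one is left with $\widehat F_T(t)=e^{-t^2/2}e^{g_T(t)}$ where $g_T(t)=O(|t|^3/\sqrt{\beta_T}+|t|/\sqrt{\beta_T}+1/\kappa_T)$, the three contributions coming respectively from the cubic remainder of $\beta_T U$, from $V$, and from $E_T$. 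Near $t=0$ I would instead use the matching of the first two moments (up to the errors just found) to see directly that $|\widehat F_T(t)-e^{-t^2/2}|/|t|=O(\beta_T^{-1/2}+\kappa_T^{-1})$, so there is no singularity at the origin; for moderate $t$ the Gaussian factor (more precisely $e^{-t^2/4}$, once one checks $\mathrm{Re}\,g_T(t)\le t^2/4$ on the whole range) makes the integrand integrable with the required decay. Combining the two regimes bounds the integral by $O(\beta_T^{-1/2}+\kappa_T^{-1})$, and with the tail term this shows $X_T$ is asymptotically Gaussian with error $O(\kappa_T^{-1}+\beta_T^{-1/2})$.

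The step I expect to be the main obstacle is the uniform control of the characteristic function over the full range $|t|\le\tau b_T$ — in particular establishing $|\widehat F_T(t)|\le e^{-t^2/4}$ there, so that Esseen's integral converges with the stated rate. This needs the cubic and higher Taylor remainders of $\beta_T U(it/b_T)$ to stay dominated by $t^2/2$ all the way out to $|t|\asymp\sqrt{\beta_T}$, and it is exactly here that one must use that the $O(1/\kappa_T)$ in the hypothesis is uniform on a \emph{fixed} complex disc, and then pass via Cauchy's estimates to bounds on all the relevant derivatives that are uniform in $T$.
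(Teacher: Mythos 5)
Your proposal is essentially correct, but note that the paper itself offers no proof of this lemma: it is quoted as a black box from Hwang \cite{hwang1}, \cite{hwang2}, with the reader referred to \cite{fs}, \S IX.5 for an exposition. What you have written is precisely the standard argument behind those references --- normalise to the cumulant generating function $\Lambda_T(z)=\log M_T(z)=\beta_T U(z)+V(z)+E_T(z)$, extract the moment asymptotics by Cauchy estimates on the analytic error $E_T$, and run Esseen's smoothing inequality with $U_0\asymp\sqrt{\beta_T}$, splitting the characteristic-function integral into the central range (Taylor expansion of $U$, the quadratic term giving exactly $-t^2/2$) and controlling the remainder by $\mathrm{Re}\,g_T(t)\le t^2/4$ for $\tau$ small. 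Two remarks. First, you implicitly and correctly replace the paper's $U'(1),V'(1),U''(1),V''(1)$ by derivatives at $z=0$; the ``$1$'' is an artefact of the formulation in \cite{fs} where the parameter is $u=e^z$, and the paper's own application in \S 6.3 indeed evaluates $\sigma'(0)$, $\sigma''(0)$, $v'(0)$, $v''(0)$. Second, you are right that the hypothesis must be read as holding \emph{uniformly} for $z$ in a fixed complex disc about $0$ (and that the nondegeneracy $U''(0)>0$ is needed for the Gaussian limit); neither is spelled out in the lemma as stated, but both are supplied where the paper applies it (Lemma 6.4 gives uniformity in $|z|<\delta$, and $U''(0)=\sigma^2>0$ follows from strict convexity of the pressure). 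I see no gap in your argument beyond the technical point you already flag, which is resolved exactly as you suggest: the Schwarz-type bound $|E_T(w)|\le C|w|/\kappa_T$ from $E_T(0)=0$ removes the apparent singularity at $t=0$, and shrinking $\tau$ dominates the cubic remainder by $t^2/4$ on the whole range $|t|\le\tau b_T$.
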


From our work above we have the expression
$$ \frac{C_z(T)}{C_0(T)} = \frac{h}{\sigma(z)} e^{(\sigma(z)-h)T}\left( 1 + O\left(\frac{1}{T}\right) \right)$$
and can therefore apply Hwang's Theorem with
$$\beta_T = T, \ \ \ \kappa_T =T, \ \ \ \ \ \ U(z) = \sigma(z) - h  \ \ \text{ and } \ \ v(z) = \log\left( \frac{h}{\sigma(z)}\right)$$
to  deduce Theorem $1.4$ and Theorem $1.5$. 
It only remains to compute the constants associated to those results.


\subsection{Computing constants}
We begin by computing the constants associated to $U(z)$. Recall that we used the implicit function theorem to obtain our analytic function $\sigma : \{z:|z|<\delta\} \to\mathbb{C}$ such that $ z + P(-\sigma(z)\tilde{r}) = 0$ for $|z|<\delta$. Differentiating this expression, we obtain
$$
-P'(-\sigma(z) r)\sigma'(z) = 1
\eqno(6.2)
$$
and so
$$A: = \sigma'(0) = \frac{1}{ \int r\ d\mu_{-h\tilde{r}}}$$
where, as before, $\mu_{-h\tilde{r}}$ is the equilibrium state from $-h\tilde{r}$. Differentiating expression (6.2) gives
$$ \sigma''(z) = \frac{P''(-\sigma(z)\tilde{r})\sigma'(z)}{(P'(-\sigma(z)\tilde{r}))^2}$$
and so
$$\sigma^2 := \sigma''(0) = -\frac{P''(-h\tilde{r})}{(P'(-h\tilde{r}))^3} = \frac{P''(-h\tilde{r})}{(\int r \ d\mu_{-h\tilde{r}})^3} >0.$$

The other function $v(z)$ is given by $v(z) = \log(h) -\log(\sigma(z))$. Hence
$$v'(z) = - \frac{\sigma'(z)}{\sigma(z)} \ \ \text{ and } \ \  v'(0) = - \frac{1}{h \int r \ d\mu_{-h\tilde{r}}}.$$
We then have that
$$ v''(z)= - \frac{\sigma''(z)\sigma(z) - (\sigma'(z))^2}{(\sigma(z))^2}$$
and
$$D:= v''(0) = -\frac{1}{h^2 (\int r \ d\mu_{-h\tilde{r}})^2} \left( \frac{hP''(-h\tilde{r})}{\int r \ d\mu_{-h\tilde{r}}} - 1 \right).$$
Letting $P''(-h\tilde{r}) = P$ gives the following relations,
\begin{enumerate}
\item  $\sigma^2 = P A^3$,
\item $D = (A/h)^2 - \sigma^2/h.$
\end{enumerate}


\begin{rem}
Rather than expressing these values in terms of thermodynamic quantities, 
it is a straightforward exercise to rewrite them purely in terms of the zeta functions and their derivatives.
\end{rem}

\section{Ordering orbits by word length}
In this section we consider the analogous asymptotic results when we order periodic orbits by their word length instead of geometric length. To do so we, as before, study an appropriate moment generating function with the aim of applying Hwang's Theorem. However, in  this setting  our proofs are much simpler as we do not need to use zeta functions. Throughout this section we work in the symbolic model for our interval map $T: I \to I$, i.e. we use Bowen-Series coding \cite{bowenseries}. 
In this setting the appropriate moment generating function to study is defined as follows.
\begin{definition}
Let
$$E_z(T) = \sum_{|\gamma| = T} e^{z l(\gamma)}$$
where the sum is taken over prime orbits, for $z \in \mathbb{C}$ and $T \in \mathbb{Z}_{\ge 0}$.
\end{definition}
As in our previous work, it suffices to study $E_z$ when the sum is taken over all, not necessarily prime periodic orbits. From now on we will use this different definition. We will abuse notation and call this function $E_z$ also. Using our symbolic model we can write
$$E_z(T) = \sum_{\sigma^T x=x} e^{zr^T(x)}$$
and so, using ideas from the proof of Lemma $3.4$, there exists $\epsilon >0$ such that
$$E_z(T) = e^{TP(zr)}\left( 1 + O(e^{-\delta T})\right)$$
uniformly for $|z| < \epsilon$.
Hence
$$\frac{E_z(T)}{E_0(T)} = e^{T(P(zr)-h)}\left(1 + O(e^{-\beta T})\right)$$
and we can apply Hwang's Theorem with
$$\beta_T = T, \ \ \kappa = \beta T, \ \ U(z)= P(zr)-h \ \ \text{and} \ \ V(z) =0.$$ 
It is well known that $E'(0) = \int r \ d\mu =: \widetilde{A}$ where $\mu$ is the measure of maximal entropy for our subshift. Furthermore, 
$$E''(0) = \lim_{n\to\infty} \frac{1}{n} \int \left(r^n-n\int r \ d\mu\right)^2 d\mu >0.$$
Writing $\mathfrak{C}_T$ for the number of prime closed geodesics of word length $T$, we deduce the following.
\begin{prop}
There exists $\beta >0$ such that
$$\frac{1}{\mathfrak{C}_T} \sum_{|\gamma|=T} l(\gamma)= \widetilde{A} T + O(e^{-\beta T})$$
as $T\to\infty$. Furthermore, there exists $\sigma^2>0$ such that
$$ \frac{1}{\mathfrak{C}_T} \#\left\{ |\gamma| =T: \frac{l(\gamma) - \widetilde{A}T}{\sqrt{T}} \le x \right\}$$
converges to a normal distribution with mean $0$ and variance $\sigma^2$ determined by
$$ \frac{1}{\mathfrak{C}_T} \sum_{|\gamma|=T} \left( l(\gamma)^2 - \frac{1}{\mathfrak{C}_T} \sum_{|\gamma|=T} l(\gamma)\right)^2 = \sigma^2T + O(e^{-\beta T}). $$
Furthermore, the convergence rate for this central limit theorem is of Berry-Esseen type.
\end{prop}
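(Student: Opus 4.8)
The plan is to re-run the transfer-operator-plus-Hwang argument of Section $6$ in the simpler setting where there is no geodesic-length variable to keep track of, working throughout in the Bowen--Series symbolic model as this section already sets up. First I would pass from prime closed geodesics to all (not necessarily prime) periodic orbits of the shift: an imprimitive orbit of word length $T$ comes from a prime orbit of word length $T/p$ for some prime $p \mid T$, so the total imprimitive contribution to $E_z(T)$ is controlled by the primitive sums at word lengths $\le T/2$ and is therefore exponentially smaller; hence replacing the $\mathfrak{C}_T$-normalized sum over prime orbits by $\sum_{\sigma^T x = x} e^{z r^T(x)}$ changes every estimate only by a factor $1 + O(e^{-cT})$. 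Next I would analyse the family $z \mapsto \widetilde{\mathcal{L}}_{zr}$ by the Ruelle operator theorem together with the analytic perturbation theory already developed for the operators $\widetilde{\mathcal{L}}_{s,z}$ in Lemma \ref{pert}: for $|z|$ small this operator has a simple leading eigenvalue $e^{P(zr)}$ depending analytically on $z$, with a spectral gap uniform in $z$. The argument behind Lemma $3.4$ (Theorem $5.5$ of \cite{PP2}) then gives
$$E_z(T) = e^{T P(zr)}\bigl(1 + O(e^{-\delta T})\bigr),$$
with the error uniform over $|z| < \epsilon$, and dividing by $E_0(T) = \#\mathrm{Fix}(\sigma^T) = e^{T P(0)}\bigl(1 + O(e^{-\delta T})\bigr)$ (here $P(0)$ is the topological entropy of the shift, written $h$ in this section) yields $E_z(T)/E_0(T) = e^{T(P(zr) - P(0))}\bigl(1 + O(e^{-\beta T})\bigr)$.

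This last expression has exactly the quasi-power form demanded by Hwang's Theorem, with $\beta_T = T$, $\kappa_T = e^{\beta T}$, $U(z) = P(zr) - P(0)$ and $V(z) \equiv 0$; the analyticity of $U$ near $z=0$ is part of the perturbation statement above. Hwang's Theorem then delivers all three conclusions at once: the mean asymptotic $\frac{1}{\mathfrak{C}_T}\sum_{|\gamma|=T} l(\gamma) = \widetilde{A}\, T + O(e^{-\beta T})$ with $\widetilde{A} = U'(0)$; the centred second moment asymptotic $\sigma^2 T + O(e^{-\beta T})$ with $\sigma^2 = U''(0)$; and convergence of $(l(\gamma) - \widetilde{A} T)/\sqrt{T}$, distributed according to the normalized counting measure on $\{|\gamma| = T\}$, to a Gaussian with mean $0$ and variance $\sigma^2$, with a Berry--Esseen error term $O(\kappa_T^{-1} + \beta_T^{-1/2}) = O(e^{-\beta T} + T^{-1/2}) = O(T^{-1/2})$.

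It remains to identify and sign the constants, exactly as in Section $6.3$. Ruelle's formula for the derivatives of pressure gives $\widetilde{A} = U'(0) = \frac{d}{dz}P(zr)\big|_{z=0} = \int r\, d\mu$, where $\mu$ is the equilibrium state of the zero function (the measure of maximal entropy of the subshift), and $\sigma^2 = U''(0) = \frac{d^2}{dz^2}P(zr)\big|_{z=0} = \lim_{n\to\infty}\frac{1}{n}\int\left(r^n - n\int r\, d\mu\right)^2 d\mu$. The only point here that is not routine bookkeeping is the non-degeneracy $\sigma^2 > 0$: this variance vanishes precisely when $r$ is cohomologous over the subshift to a constant, by the Liv\v{s}ic theorem, and that cannot happen — summing such a cohomology relation around periodic orbits would force $l(\gamma) = c\,|\gamma|$ for every closed geodesic $\gamma$, whereas there are closed geodesics of equal word length and unequal length (equivalently, a constant-roof suspension would contradict the weak mixing of the geodesic flow invoked earlier). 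Hence $\sigma^2 > 0$. There is no serious obstacle in this section — it is deliberately the easy case — but the point that will require the most care is the uniformity in the \emph{complex} parameter $z$ of the exponentially small error in $E_z(T) = e^{T P(zr)}(1 + O(e^{-\delta T}))$; this follows once one knows the spectral gap of $\widetilde{\mathcal{L}}_{zr}$ is uniform for small $|z|$, the leading eigenvalue being handled by analytic perturbation theory and the remainder of the spectrum by upper semicontinuity of the spectrum together with a compactness argument, just as in the proof of Lemma \ref{axis}. Granted this, all constants appearing in Hwang's Theorem are automatically uniform in $z$ and the stated error terms follow.
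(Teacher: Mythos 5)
Your argument is correct and follows the paper's own proof essentially verbatim: pass from prime closed geodesics to all periodic orbits of the shift, obtain the quasi-power estimate $E_z(T)/E_0(T) = e^{T(P(zr)-h)}\left(1+O(e^{-\beta T})\right)$ uniformly for small $|z|$ from the spectral gap of the perturbed transfer operator (the argument behind Lemma 3.4), and apply Hwang's theorem with $U(z)=P(zr)-h$ and $V\equiv 0$. The only difference is that you spell out two points the paper treats as standard — the exponential smallness of the imprimitive-orbit contribution and the positivity of $\sigma^2$ via the Liv\v{s}ic/weak-mixing argument — and both of those justifications are correct.
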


As before, we can find an asymptotic expansion for the asymptotic average between $|\gamma|$ and $l(\gamma)$. This follows from a simple computation and so we exclude the proof.
\begin{cor}
For any $k\in\mathbb{Z}_{\ge 0}$ we can write
$$\frac{1}{\mathfrak{C}_T} \sum_{|\gamma|=T} \frac{l(\gamma)}{|\gamma|} = \sum_{j=0}^{k-1} \frac{a_j}{T^j} + O\left( \frac{1}{T^k}\right).$$
Furthermore, $a_0 = \widetilde{A}, a_2= \widetilde{A}/h$ and $a_3= \widetilde{A}/h^2$. 
\end{cor}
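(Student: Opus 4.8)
The plan is to reuse, essentially verbatim, the partial-summation argument that takes Theorem~\ref{asymav} to Corollary~\ref{secondcor}, but now fed by the word-length ordered asymptotic recorded in the Proposition immediately above rather than by Theorem~\ref{asymav}. No new analytic ingredient is needed -- this is the sense in which the text calls it "a simple computation" -- and the substance of the argument is careful bookkeeping of the error terms.

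First I would make the routine reduction from prime closed geodesics to all (not necessarily prime) periodic orbits: both the relevant counting functions and the weighted sums change only by quantities exponentially smaller than the main terms, so one may work with all periodic orbits of prescribed word length throughout. Next comes the elementary but decisive observation that on the set $\{\gamma : |\gamma| = T\}$ one has $l(\gamma)/|\gamma| = l(\gamma)/T$; dividing the Proposition's estimate $\frac{1}{\mathfrak{C}_T}\sum_{|\gamma|=T} l(\gamma) = \widetilde{A}\,T + O(e^{-\beta T})$ by $T$ already yields the statement with $a_0 = \widetilde{A}$ (indeed with an exponentially small remainder, which is $O(T^{-k})$ for every fixed $k$). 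To produce the full $T^{-j}$ expansion with the stated coefficients one sums instead over a range of word lengths: setting $\Phi(u) = \sum_{|\gamma|\le u} l(\gamma)$ and $\mathfrak{C}_{\le T} = \#\{\gamma : |\gamma|\le T\}$, Abel summation gives $\sum_{|\gamma|\le T} l(\gamma)/|\gamma| = \Phi(T)/T + \int_2^T \Phi(u)/u^2\,du + O(1)$, into which one substitutes the prime-orbit-theorem asymptotics for $\mathfrak{C}_{\le T}$ and $\Phi$ -- the analogue here of $\pi(T) = \mathrm{li}(e^{hT}) + O(e^{h'T})$. The constants $a_j$ are then read off term by term from the standard expansion of the resulting logarithmic integral, exactly as the value $A_0 = A/h$ appeared in the "less informative" reformulation of Theorem~\ref{asymav}; this is where the powers $1/h^{\,j-1}$ enter, yielding $a_0 = \widetilde{A}$, $a_2 = \widetilde{A}/h$ and $a_3 = \widetilde{A}/h^2$.

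The only real obstacle is uniformity and accounting. Every implied constant above -- in the Proposition, in the prime/all-orbit comparison, and in the truncation of the counting asymptotics -- has to be controlled precisely enough to conclude that the accumulated error is $O(T^{-k})$ for each fixed $k$, and one needs the integral identity $\int_2^T e^{hu}/u^2\,du = h\,\mathrm{li}(e^{hT}) - e^{hT}/T + O(1)$ together with its higher-order refinements to check that the $a_j$ fall into the claimed pattern. All of this stays within elementary calculus and the estimates already established, so the computation is genuinely routine.
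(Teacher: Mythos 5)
The paper gives no proof of this corollary (it is explicitly excluded as ``a simple computation''), so I can only assess your argument on its own terms. The first half of your proposal is correct and is surely the intended computation: since the sum runs over $\{\gamma : |\gamma| = T\}$, one has $l(\gamma)/|\gamma| = l(\gamma)/T$ pointwise, so dividing the preceding Proposition by $T$ gives $\frac{1}{\mathfrak{C}_T}\sum_{|\gamma|=T} l(\gamma)/|\gamma| = \widetilde{A} + O(e^{-\beta T})$, which is of the stated form for every $k$ with $a_0 = \widetilde{A}$; the prime-versus-all-orbits reduction is standard and unproblematic.

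The gap is in the second half, where you try to manufacture the values $a_2 = \widetilde{A}/h$ and $a_3 = \widetilde{A}/h^2$. First, an asymptotic expansion in powers of $1/T$ is unique, so your own first step already forces $a_j = 0$ for all $j \ge 1$ for the displayed quantity; you cannot then obtain nonzero $a_2, a_3$ for the \emph{same} quantity by a different computation. Your device of replacing $\sum_{|\gamma|=T}$ by $\sum_{|\gamma|\le T}$ and normalizing by a cumulative count silently proves a different statement. Second, even for that modified statement the mechanism you invoke fails: in the word-length ordering the counting function is a sum over integers, $\#\{\gamma : |\gamma|\le T\} = \sum_{n\le T}\mathfrak{C}_n$ with $\mathfrak{C}_n \asymp e^{hn}/n$, and there is no analogue of $\pi(T) = \mathrm{li}(e^{hT}) + O(e^{h'T})$; the $1/T$-expansion of such geometric-type sums has coefficients built from $(1-e^{-h})^{-1}$ and its derivatives, not from powers of $1/h$. (The $1/h^{j}$ pattern after Theorem 1.2 comes precisely from the continuous object $\mathrm{li}(e^{hT})$, which is absent here.) So the computation you sketch would not produce $\widetilde{A}/h$ and $\widetilde{A}/h^2$. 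The honest conclusion is that the displayed expansion holds trivially with $a_0=\widetilde{A}$ and $a_j=0$ for $j\ge1$, and that the ``Furthermore'' clause is inconsistent with the displayed formula (it appears to have been transplanted from the geometric-length-ordered setting); a correct write-up should flag this rather than supply an argument for it.
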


Our method here provides a alternate proof of Gekhtman, Taylor and Tiozzo's central limit theorem \cite{gtt} in our setting. Moreover, it shows that the Berry-Esseen error term holds. Moreover, it is easy to see that our method works at the full level of generality considered by Gekhtman, Taylor and Tiozzo.

 \section{Local Limit Theorems}
 
 In this section we prove our local limit theorem, Theorem $1.7$. As before, we want to understand an appropriate quasi-power expression. Specifically we want to study the behaviour of
 $$\frac{C_{it}(T)}{C_0(T)} = \frac{1}{\pi(T)} \sum_{l(\gamma) <T} e^{it|\gamma|}$$
 for $t \in \mathbb{R}$ as $T\to\infty$. We want to understand how $C_{it}(T)$ grows as $t$ varies in $ \mathbb{R}$ (or $[-\pi,\pi]$)  instead of belonging to a small complex neighbourhood of zero. As before, to achieve this, we study our zeta function of two variables. The fact that $it$ is imaginary makes our analysis easier as the perturbing factor $e^{it}$ has unit modulus and so  does not contribute to the growth of the quantities we want to consider. We have the following.
 
 \begin{prop}
 For ever $v>0$, there exists $\xi >0$ such that,
 \begin{enumerate}
 \item for any fixed $|t| \in [v, \pi]$, $s \mapsto \zeta'(s,it)/\zeta(s,it)$ is well defined and analytic for $|\mathrm{Re}(s)-h|<\xi$; and
 \item there is $M>0$ such that, 
 $$\left|\frac{\zeta'(s,it)}{\zeta(s,it)}\right| \le M \max\{|\mathrm{Im}(s)|^{\beta}, 1\}$$
for some $0<\beta <1$ uniformly in $|\mathrm{Re}(s)-h| <\xi$, $|t| \in [v, \pi]$.
 \end{enumerate}
 \end{prop}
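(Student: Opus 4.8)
The proposition is the analogue, for purely imaginary second variable $z = it$ with $|t|$ bounded away from $0$, of Proposition \ref{thmdom2}. The strategy is to revisit the two-regime analysis used to prove Proposition \ref{thmdom2} (the regime $|\mathrm{Im}(s)|$ small and the regime $|\mathrm{Im}(s)| \ge 1$) and check that nothing degenerates when $z$ is specialised to $it$ and allowed to range over the compact set $|t| \in [v,\pi]$ rather than a small complex disc around $0$. The key structural point, already emphasised in the statement, is that $|e^{it}| = 1$, so the perturbing factor does not affect any of the convergence estimates: wherever the proof of Proposition \ref{thmdom2} produced terms like $e^{n\,\mathrm{Re}(z)}$ or $|t|^{|\mathrm{Re}(z)|}$, these are now simply $1$, which actually \emph{simplifies} the bookkeeping.

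First I would treat $|\mathrm{Im}(s)|$ small. Here the essential input is that the spectral radius of $\widetilde{\mathcal L}_{s,it}$ stays strictly below $1$ as long as $|\mathrm{Re}(s) - h|$ is small and $|t| \in [v,\pi]$. For $\mathrm{Re}(s) = h$ this is exactly the assertion that $\widetilde{\mathcal L}_{h+i\tau, it}$ has spectral radius $< 1$, i.e. that the operator $e^{it}\widetilde{\mathcal L}_{h+i\tau}$ has no eigenvalue of modulus $1$; by the weak-mixing / non-lattice argument used in Lemma \ref{axis} (the spectral radius of $\widetilde{\mathcal L}_{h+i\tau}$ is $<1$ for $\tau \ne 0$, and equals $1$ only at $\tau = 0$ where the eigenvalue is $1$, which is not killed by multiplication by $e^{it}$ precisely when $t \notin 2\pi\mathbb Z$, hence automatically when $|t| \in [v,\pi]$), the spectral radius is $<1$ on the whole compact parameter set $\{\mathrm{Re}(s) = h\} \times \{|t| \in [v,\pi]\}$ except we must also handle $\tau=0$: there $e^{it}$ is an eigenvalue of modulus $1$, so $\zeta(\cdot, it)$ genuinely has a pole at $s = \sigma(it)$, but $\mathrm{Re}(\sigma(it)) \le \sigma(0) = h$ by Lemma \ref{pressinequ}, and in fact $< h$ unless $t = 0$, so after shrinking $\xi$ the pole lies strictly left of $\mathrm{Re}(s) = h$ and $\zeta'(s,it)/\zeta(s,it)$ is analytic on $|\mathrm{Re}(s) - h| < \xi$. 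Upper semi-continuity of the spectrum plus a compactness argument in $(\tau, t)$ then extends the spectral gap to a uniform neighbourhood, exactly as in the proof of Lemma \ref{axis}.

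Next, for $|\mathrm{Im}(s)| \ge 1$, I would rerun the Dolgopyat estimate (Lemma \ref{dolgo}) and Lemma \ref{ruellecomp} verbatim: these give $|\log\zeta(s,it)| = O(|\mathrm{Im}(s)|)$, uniformly in $|t| \in [v,\pi]$, since the factors $e^{m\mathrm{Re}(z)}$ and $e^{l|\mathrm{Re}(z)|}$ appearing in the displayed bound for $|\log\zeta(s,z)|$ in Section 5.2 are now all equal to $1$. To pass from the growth bound on $\log\zeta$ to the same bound on the logarithmic derivative I would invoke Lemma \ref{inequ} (the Borel–Carathéodory-type inequality), as in Section 5.2, taking the base point $y = y_t$ near $\sigma(\mathrm{Re}(it)) + 1 + i\tau = h + 1 + i\tau$; the uniform lower bound $|\zeta(y_t, it)| \ge c > 0$ needed to control $U(y_t)$ follows since $\mathrm{Re}(y_t) = h+1$ is well inside the region of absolute convergence, uniformly in $t$. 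This yields $|\zeta'(s,it)/\zeta(s,it)| = O(|\mathrm{Im}(s)|)$, and then a Phragmén–Lindelöf interpolation between the growth exponent $1$ at $\mathrm{Re}(s) = h - \xi$ and exponent $0$ at $\mathrm{Re}(s) = h + \delta'$ (applied to $\chi_0(s,it) = \zeta'(s,it)/\zeta(s,it) + 1/(s - \sigma(it))$, which is analytic across the strip since we have arranged the pole to sit to the left) pulls the exponent down to some $0 < \beta < 1$ near $\mathrm{Re}(s) = h$, uniformly in $|t| \in [v,\pi]$ by taking suprema over the relevant compact boundary set.

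\textbf{Main obstacle.} The one place that requires genuine care, rather than transcription, is the uniformity of all these estimates over the \emph{compact but not small} set $|t| \in [v,\pi]$, together with the behaviour near $\tau = \mathrm{Im}(s) = 0$. In the original proof of Proposition \ref{thmdom2} one exploited that $z$ lies in a tiny disc around $0$, where $\sigma(z)$ is close to $h$ and every perturbation is small; here $it$ is not small, so I cannot appeal to analytic perturbation theory (Lemma \ref{pert}) uniformly in $t$. Instead the whole argument must be rebuilt on the spectral-gap / compactness mechanism of Lemma \ref{axis}, upgraded to a two-parameter compactness in $(\tau, t) \in ([-1,1]) \times ([v,\pi] \cup [-\pi,-v])$, and one must verify that the resulting constants $\xi, M, \beta$ can be chosen uniformly. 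The excluded neighbourhood of $t = 0$ (the hypothesis $|t| \ge v$) is exactly what prevents the eigenvalue of modulus $1$ from reappearing and is therefore essential; the price, as usual for local limit theorems, is that the estimate degrades as $v \to 0$, which is why $\xi = \xi(v)$ in the statement.
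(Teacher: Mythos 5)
Your proposal is correct and follows essentially the same route as the paper: near $s=h$ the key point is that $e^{it}$ is bounded away from $1$ for $|t|\in[v,\pi]$, so the factor $1-e^{it}e^{P(-sr)}$ in the local extension cannot vanish, and away from the real axis one reruns the Dolgopyat/compactness estimates and the Ellison--Phragm\'en--Lindel\"of argument for the logarithmic derivative, all of which only simplify because $|e^{it}|=1$. The one step to tidy up is your treatment of $\tau=0$ via the pole $\sigma(it)$ and Lemma \ref{pressinequ}: that lemma, and the implicit function $\sigma$ itself, are only established for $|z|<\delta$ and so do not apply to $z=it$ with $|t|\in[v,\pi]$; the paper instead argues directly that $|1-e^{it}e^{P(-sr)}|\ge c>0$ uniformly for $|s-h|<\delta$ and $|t|\in[v,\pi]$, which avoids locating any pole and immediately gives the uniformity in $t$ that your "shrink $\xi$" step would otherwise still need to justify.
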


\begin{proof}
As before we analytically extend $\zeta$ locally to $s=h$ to obtain
$$\frac{\zeta'(s,it)}{\zeta(s,it)} = \frac{e^{it}e^{P(-sr)}}{1-e^{it}e^{P(-sr)}} + \widetilde{\chi}_0(s,it)$$
where, for any fixed $t\in\mathbb{R}$, $s \mapsto \widetilde{\chi}_0(s,it)$ is analytic in $|s-h|<\epsilon$ for some $\epsilon >0$. Again, we see that there is a simple pole at $s,it$ when $it + P(-sr)=0$. Then note that we can find $c,\delta>0$ such that $|1- e^{it}e^{P(-sr)})| \ge c$ for all $|t| \in [v,\pi]$ and $|s-h|<\delta$. We conclude that there exists $\epsilon >0$ such that for any fixed $|t| \in [v,\pi]$, the map $s \mapsto \zeta'(s,it)/\zeta(s,it)$ is well defined and analytic for $s$ in the domain $|s-h|<\epsilon$. The analysis used to study $\zeta(s,z)$ when $z$ was in a complex neighbourhood of zero can be applied again and allows us to extend this domain to a half plane as required. \\
\indent Part $2.$ of the proposition follows from the same method used to obtain the logarithmic derivative bounds in Proposition $\ref{thmdom2}$. As mentioned previously, the fact that $e^{it}$ has unit modulus makes this easy to verify. 
\end{proof}
 
 Applying the same arguments used in Lemma $\ref{qp}$ and the paragraphs proceeding its proof gives the following. Note that since $\zeta'(s,it)/\zeta(s,it)$ is analytic on the domain described in the previous proposition, there is no residue to pick up when we apply the Residue Theorem.
 
 \begin{lemma} \label{imqp}
 Take $v>0$, then there is some $\beta >0$ such that
 $$\frac{C_{it}(T)}{C_0(T)} = \frac{1}{\pi(T)} \sum_{l(\gamma) <T} e^{it|\gamma|} = O\left(e^{-\beta T}\right)$$
 uniformly for $|t| \in [v,\pi]$.
 \end{lemma}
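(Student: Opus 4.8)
The statement is essentially a uniform version of Lemma \ref{qp1}/Lemma \ref{qp} adapted to the purely imaginary variable $z = it$ with $|t|$ ranging over the compact interval $[v,\pi]$, and the key new feature is that there is \emph{no} residue contribution. The plan is to run the Perron-formula argument from the proof of Lemma \ref{qp}, using the analytic information packaged in the Proposition immediately preceding this Lemma (analyticity of $s \mapsto \zeta'(s,it)/\zeta(s,it)$ on a strip $|\mathrm{Re}(s)-h| < \xi$ together with the polynomial bound $|\zeta'(s,it)/\zeta(s,it)| \le M\max\{|\mathrm{Im}(s)|^\beta,1\}$), and to track the uniformity in $|t|\in[v,\pi]$ throughout. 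First I would apply the uniform effective Perron formula (Lemma \ref{perron}) together with Lemma \ref{logthm} (the expansion of the logarithmic derivative as $\sum_{\gamma'} l(\gamma') e^{it|\gamma'| - s l(\gamma')} + \chi_1$) to write
$$
\sideset{}{'}\sum_{l(\gamma')\le T} l(\gamma') e^{it|\gamma'|} = \frac{1}{2\pi i}\int_{d-iR}^{d+iR}\left(-\frac{\zeta'(s,it)}{\zeta(s,it)}\right)\frac{e^{sT}}{s}\,ds + (\text{error terms}),
$$
with $d = h + T^{-1}$, $\epsilon = e^{-xT}$, $R = e^{yT}$. The error terms (the near-diagonal sum $|T-l(\gamma')|\le\epsilon$ and the off-diagonal sum $|T-l(\gamma')|>\epsilon$) are estimated exactly as in Lemma \ref{qp}, now with the trivial bound $|e^{it|\gamma'|}| = 1$, which makes them uniform in $t$ and comparable to the estimates with $z=0$; in particular the off-diagonal sum is $O(Te^{dT}/(R\epsilon))$ via $\zeta'(d,0)/\zeta(d,0) = O(T)$.

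Next I would shift the contour to $\mathrm{Re}(s) = c$ for some $h - \xi < c < h$, integrating around the rectangle with vertices $d\pm iR$, $c\pm iR$. The crucial point is that, by the preceding Proposition, $s\mapsto \zeta'(s,it)/\zeta(s,it)$ is analytic on the whole strip $|\mathrm{Re}(s)-h|<\xi$ for each fixed $|t|\in[v,\pi]$ --- there is no pole at $s = \sigma(it)$ because $\mathrm{Re}(\sigma(it)) < h$ strictly (this is where the hypothesis $|t|\ge v > 0$ is used, mirroring the weak-mixing argument: $it + P(-\sigma(it)\tilde r) = 0$ forces $|1 - e^{it}e^{P(-sr)}|$ to be bounded below near $s=h$). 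Hence the Residue Theorem gives that the rectangular integral is \emph{zero}, so the main term vanishes and we are left only with the side and left-edge contributions, bounded using the Proposition's polynomial estimate by $O(R^{\beta-1}e^{dT})$ and $O(R^\beta e^{cT})$ respectively, uniformly in $t$. Choosing $y$ so that $R^\beta e^{cT}$ decays exponentially faster than $e^{\sigma(0)T} = e^{hT}$ (possible since $c < h$ and $\beta < 1$), and $0 < x < h - h'$ so that $Te^{dT}/(R\epsilon)$ does likewise, we obtain $\sideset{}{'}\sum_{l(\gamma')\le T} l(\gamma') e^{it|\gamma'|} = O(e^{(h-\alpha)T})$ for some $\alpha > 0$ uniform in $|t|\in[v,\pi]$.

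Finally I would remove the $l(\gamma')$ weight and pass from the $\le T$ (half-weighted) sum to the strict sum $l(\gamma) < T$ over prime orbits, exactly as in the paragraphs following Lemma \ref{qp}: a partial-summation/Abel-summation argument (writing $\varphi_{it}(u) = \sum_{l(\gamma)<u} l(\gamma)e^{it|\gamma|}$ and $C_{it}(T) = \varphi_{it}(T)/T + \int_0^T \varphi_{it}(u)/u^2\,du$) converts the bound on the weighted sum into $C_{it}(T) = O(e^{(h-\alpha')T})$, and the contribution of non-prime orbits and of $l(\gamma)=T$ is negligible as before. Dividing by $C_0(T) = \pi(T) \sim e^{hT}/(hT)$ yields $C_{it}(T)/C_0(T) = O(e^{-\beta T})$ uniformly for $|t|\in[v,\pi]$, which is the claim. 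The main obstacle I anticipate is purely bookkeeping: verifying that every $O(\cdot)$ constant and every choice of $x, y$ in the Perron argument can be made simultaneously valid for \emph{all} $|t|\in[v,\pi]$ --- this follows from compactness of $[v,\pi]$ and the fact that the Proposition's constants $M, \beta, \xi$ are already uniform there, but it requires care to ensure the exponent comparisons ($R^\beta e^{cT}$ versus $e^{hT}$, etc.) hold with a single $\alpha > 0$ independent of $t$.
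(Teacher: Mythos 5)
Your proposal is correct and follows essentially the same route as the paper: the authors likewise rerun the Perron-formula argument of Lemma \ref{qp} with $d = h+T^{-1}$, $\epsilon = e^{-xT}$, $R = e^{yT}$, observe that the analyticity of $s\mapsto\zeta'(s,it)/\zeta(s,it)$ on a strip containing $\mathrm{Re}(s)=h$ (from the preceding proposition) means there is no residue to pick up, obtain $\sideset{}{'}\sum_{l(\gamma')<T} l(\gamma')e^{it|\gamma'|} = O(e^{(h-\delta)T})$, and then remove the $l(\gamma')$ weight via the Stieltjes-integral argument before dividing by $\pi(T)$. Your more explicit tracking of uniformity in $|t|\in[v,\pi]$ is exactly the bookkeeping the paper leaves implicit.
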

 \begin{proof}
 We use the same method as Lemma $\ref{qp}$. Setting $d = h + T^{-1}$, $\epsilon = e^{-xT}$ and $R=e^{yT}$ for appropriate $x,y \in \mathbb{R}_{\ge 0}$ and using Perron's Formula along with the same estimates gives
 $$ \sideset{}{'}\sum_{l(\gamma') <T} l(\gamma') e^{it|\gamma'|} = O\left(e^{(h-\delta) T}\right)$$
 for some $\delta>0$. We can then run the same arguments and use the Stieltjes integral to obtain the desired expression.
 \end{proof}
 
 We now follow the method presented in \cite{vivandval}. To simplify the notation throughout the following, let 
 $$\widetilde{C}_{it}(T) = \frac{C_{it}(T)}{C_0(T)}$$
 and write, for $l\in\mathbb{Z}_{\ge0}$,
 $$P_T(l) = \frac{1}{\pi(T)} \#\{\gamma: l(\gamma) < T, |\gamma|=l\}.$$
 Theorem $1.7$ is concerned with the asymptotic growth of 
 $P_T([AT + \sigma x \sqrt{T}])$
 for $x \in\mathbb{R}$, where $[\cdot]$ denotes the nearest integer function. Let $l_x(n) = AT + \sigma x \sqrt{T}$ and define
 $I_T(x) = 2\pi\sqrt{T} \ P_T([l_x(T)]).$
 Since $\widetilde{C}_{it}(T) = \sum_{k \ge 0} P_T(k) e^{itk}$, we have that
 $$I_T(x) = \sqrt{T} \int_{-\pi}^\pi e^{-it[l_x(T)]} \widetilde{C}_{it}(T) dt.$$
 Now recall that, by our previous work, when $v>0$ is sufficiently small, $\widetilde{C}_{it}(T)$ admits the following quasi-power expression for $|t|<v$,
 \begin{equation}\label{qpexpression}
 \widetilde{C}_{it}(T) = \frac{h}{\sigma(it)} e^{(\sigma(it)-h)T}\left( 1+O\left(\frac{1}{T}\right)\right). \tag{8.1}
 \end{equation}
This expression can be used to study
$$I_T^0 = \sqrt{T} \int_{-v}^v  e^{-it[l_x(T)]} \widetilde{C}_{it}(T) dt$$
for small $v$. The following proposition shows that $I_T^0$ provides the lead term for our limit theorem. 
\begin{prop}\label{balval}
We have that
 $$I_T^0(x) = \frac{\sqrt{2\pi}}{\sigma} e^{-x^2/2} + O(T^{-1/2})$$
 as $T\to\infty$.
\end{prop}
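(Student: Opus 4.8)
The plan is to substitute the quasi-power expansion $(8.1)$ into the definition of $I_T^0$, rescale the variable of integration by $\sqrt{T}$, and recognise the limit as a complex Gaussian integral. Inserting $(8.1)$ we write
$$
I_T^0(x) = \sqrt{T}\int_{-v}^{v} \frac{h}{\sigma(it)}\, e^{(\sigma(it)-h)T - it[l_x(T)]}\left(1 + O\!\left(\tfrac{1}{T}\right)\right) dt,
$$
and substitute $t = u/\sqrt{T}$, so that the domain becomes $|u|\le v\sqrt{T}$ and the outer factor $\sqrt{T}$ cancels the Jacobian $T^{-1/2}$. Since $\sigma$ is analytic near $0$ with $\sigma(0)=h$, $\sigma'(0)=A$ and $\sigma''(0)=\sigma^2$, we have $\sigma(it)-h = iAt - \tfrac{\sigma^2}{2}t^2 + O(t^3)$, hence $(\sigma(it)-h)T = iAu\sqrt{T} - \tfrac{\sigma^2}{2}u^2 + O(u^3/\sqrt{T})$; simultaneously $[l_x(T)] = AT + \sigma x\sqrt{T} + O(1)$ gives $-it[l_x(T)] = -iAu\sqrt{T} - i\sigma x u + O(u/\sqrt{T})$. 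The dangerous $\sqrt T$-scale terms $\pm iAu\sqrt{T}$ cancel, leaving exponent $-i\sigma x u - \tfrac{\sigma^2}{2}u^2 + O((u+u^3)/\sqrt{T})$, while the prefactor satisfies $h/\sigma(it) = 1 + O(u/\sqrt{T})$.

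To make this rigorous with the claimed error term, I would split the range of integration at $|u| = T^{1/8}$. On the tail $T^{1/8}\le |u|\le v\sqrt{T}$, I use that for $v$ small enough the expansion above gives $\mathrm{Re}(\sigma(it)-h)\le -\tfrac{\sigma^2}{4}t^2$, so that $|e^{(\sigma(it)-h)T}|\le e^{-(\sigma^2/4)u^2}$ and this part of the integral is $O(e^{-cT^{1/4}})$. On the central range $|u|\le T^{1/8}$ the cubic remainder $O(u^3/\sqrt{T})$ is $O(T^{-1/8})\to 0$, so using $|e^{w}-1|\le |w|e^{|w|}$ I may replace the integrand by $e^{-i\sigma x u - \sigma^2 u^2/2}$ up to an additive error bounded by $O(T^{-1/2})\cdot(\text{polynomial in }|u|)\cdot e^{-\sigma^2 u^2/4}$, whose integral is $O(T^{-1/2})$. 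Finally I extend $\int_{|u|\le T^{1/8}} e^{-i\sigma x u - \sigma^2 u^2/2}\,du$ to all of $\mathbb{R}$ at the cost of $O(e^{-cT^{1/4}})$, and evaluate $\int_{-\infty}^{\infty} e^{-i\sigma x u - \sigma^2 u^2/2}\,du = \sqrt{2\pi/\sigma^2}\, e^{-x^2/2} = \tfrac{\sqrt{2\pi}}{\sigma}\,e^{-x^2/2}$ by completing the square.

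The main obstacle is the error bookkeeping on the central range: one must simultaneously control the multiplicative $O(1/T)$ from $(8.1)$, the cubic term $O(u^3/\sqrt{T})$ in the exponent, and the $O(1/\sqrt{T})$ contributions coming from both the prefactor $h/\sigma(it)$ and the integer rounding $[l_x(T)]-l_x(T)$, and check that, after multiplication by the Gaussian weight and integration, each contributes only $O(T^{-1/2})$. The $x$-dependence causes no trouble, since all of it resides in the unimodular factor $e^{-i\sigma x u}$. Everything else—the cancellation of the $\sqrt{T}$-scale oscillation, the quadratic Gaussian decay in $u$ that tames the tails, and the evaluation of the complex Gaussian—is routine.
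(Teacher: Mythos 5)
Your argument is correct and is essentially the same saddle-point/quasi-power computation that the paper invokes: the paper's own "proof" simply defers to Baladi--Vall\'ee's Theorem 4, and their method is exactly your rescaling $t=u/\sqrt{T}$, cancellation of the $iAu\sqrt{T}$ oscillation against $-it[l_x(T)]$, Gaussian tail cut-off, and evaluation of the complex Gaussian integral. Your version has the merit of actually carrying out the error bookkeeping (the $O(1/T)$ from $(8.1)$, the cubic remainder, the prefactor, and the rounding term) that the paper leaves to the reference.
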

\begin{proof}
The proof of this proposition follows the method used by Baladi and Vall\'ee in \cite{vivandval} used to proof Theorem $4$ in Section $5$. The proof relies on our quasi-power expression for $\widetilde{C}_{it}(T)$, (\ref{qpexpression}), and up to some minor changes the proof presented in \cite{vivandval} works in our setting. We therefore refer the reader to \cite{vivandval} for the proof.
\end{proof}

 We are now ready to prove our local limit theorem.
 
 \begin{proof} [Proof of Theorem $1.7$]
By Lemma \ref{imqp},
 $$I_T^1(x) := \sqrt{T} \int_{|t|\in [v,\pi]} e^{-it[l_x(T)]} \widetilde{C}_{it}(T) dt = O\left(\sqrt{T} \sup_{|t| \in [v,\pi]}|\widetilde{C}_{it}(T)|\right) = O(T^{-1/2}).$$
 Hence, by Proposition \ref{balval},
 $$I_T(x) = I_T^0(x) + I_T^1(x) =  \frac{\sqrt{2\pi}}{\sigma} e^{-x^2/2} + O(T^{-1/2}).$$
 Through simple algebraic manipulations, this expression can rearranged into the statement of Theorem $1.7$. This concludes the proof.
 \end{proof}

\Addresses


\begin{thebibliography}{50}
	
	
	\bibitem{AKU}
	 A. Abrams, S. Katok and I. Ugarcovici, 
	Flexibility of entropy of boundary maps for surfaces of constant negative curvature, 
	https://arxiv.org/abs/1909.07032.


\bibitem{vivandval}
V. Baladi and B. Vall\'ee, Euclidean algorithms are Gaussian, J. Number Theory 110 No. 2, 331–386, 
2005.

\bibitem{bowen}
R. Bowen, Symbolic dynamics for hyperbolic flows,  Amer. J. Math. 95, 429–460, 1973.

\bibitem{bowenruelle}
R. Bowen and D. Ruelle,
The ergodic theory of Axiom A flows.
Invent. Math. 29, 181–202, 1975.

\bibitem{bowenseries}
R. Bowen and C. Series,
Markov maps associated with Fuchsian groups.
Inst. Hautes Études Sci. Publ. Math. No. 50, 153–170, 1979.

\bibitem{CS} S. Cantrell and R. Sharp, A central limit theorem for hyperbolic flows, preprint, arXiv:1805.05692 [math.DS], 2018.

\bibitem{CLM} M. Chas, K. Li and B. Maskit, Experiments suggesting that the distribution of the hyperbolic length of closed geodesics sampling by word length is Gaussian, Experimental Mathematics 22 No. 4, 367-371, 2013.

\bibitem{vallee}
Eda Cesaratto and Brigitte Vall\'ee, 
Gaussian Behavior of Quadratic Irrationals,
https://arxiv.org/abs/1708.00051, 2017.

\bibitem{dolgopyat}
D. Dolgopyat, On decay of correlations in Anosov flows, Ann. of Math. 147, 357-390, 1998.

\bibitem{ellison}
W. Ellison and F. Ellison, Prime numbers, Hermann, Paris, 19.

\bibitem{epstein}
 D. Epstein, J. Cannon, D. Holt, S.  Levy,
 M. Paterson,  W. Thurston,  Word processing in groups. Jones and Bartlett Publishers, Boston, MA, 1992.


\bibitem{fs} P. Flajolet and R. Sedgewick,  Analytic Combinatorics, C.U.P, Cambridge, 2009. 

\bibitem{gtt} I. Gekhtman, S. Taylor and G. Tiozzo, A central limit theorem for random closed geodesics: proof of the Chas-Li-Maskit conjecture, arXiv preprint, arXiv:1808.08422 [math.GT], 2018.

\bibitem{ghys} E. Ghys and P. de la Harpe,
 Panorama. (French) Sur les groupes hyperboliques d'après Mikhael Gromov (Bern, 1988), 1–25, Progr. Math., 83, Birkhäuser Boston, Boston, MA, 1990.

\bibitem{hwang1}
H.-K. Hwang,
Large deviations for combinatorial distributions. I. Central limit theorems,
Ann. Appl. Probab. 6, 297–319, 1996.

\bibitem{hwang2}
H-K. Hwang, Large deviations of combinatorial distributions. II. Local limit theorems, 
Ann. Appl. Probab. 8, 163–181, 1998.

\bibitem{kranz}
S. Krantz. Function theory of several complex variables. AMS Chelsea Publishing, Providence,
RI, 2001. Reprint of the 1992 edition.

\bibitem{lalley} S.P. Lalley, Distribution of periodic orbits of symbolic and Axiom A flows, Adv. Appl. Math.
8, 154–193, 1987.


\bibitem{lee}
J. Lee and H.-S. Sun, 
Another note on ``Euclidean algorithms are Gaussian'' by V. Baladi and Vall\'ee, Acta Arithmetica 188, 241--251, 2019.
	
	
\bibitem{margulis1}	
G.A. Margulis, Certain applications of ergodic theory to the investigation of manifolds of negative curvature, Functional Anal. Appl. 3 No. 4, 335–336, 1969.

\bibitem{margulis2}	
G.A. Margulis,
On some aspects of the theory of Anosov systems,  Springer Monographs in Mathematics. Springer-Verlag, Berlin, 2004.

\bibitem{milnor}	
J. Milnor,
Growth of finitely generated solvable groups. J. Differential Geometry 2, 447–449, 1968.


\bibitem{PP1}
W. Parry and M. Pollicott,
An analogue of the prime number theorem for closed orbits of Axiom A flows, Ann. of Math. (2) 118, 573–591, 1983.

\bibitem{PP2}
W. Parry and M. Pollicott,
Zeta functions and the periodic orbit structure of hyperbolic dynamics,
 Ast\'erisque No. 187-188, 1- 268, 1990.

\bibitem{PS-average}	
M. Pollicott and R. Sharp, 
Comparison theorems and orbit counting in hyperbolic geometry. Trans. Amer. Math. Soc. 350, 473–499, 1998.

\bibitem{PS-error}	
M. Pollicott and R. Sharp, 
 Exponential error terms for growth functions on negatively curved surfaces, Amer. J. Math. 120, 1019–1042, 1998.
 
 \bibitem{ratner}
 M. Ratner, Markov decomposition for an Y-flow on a three-dimensional manifold,
 Math. Notes 6, 880–886, 1969.
 
  \bibitem{ruelle-book}
  D. Ruelle, 
Thermodynamic formalism, Encyclopedia of Mathematics and its Applications, 5. Addison-Wesley Publishing Co., Reading, Mass., 1978.
 

\bibitem{series}
C. Series,  Geometrical Markov coding of geodesics on surfaces of constant negative curvature,  Ergodic Theory Dynam. Systems 6, 601–625, 1986. 

\bibitem{titchmarsh}
E. Titchmarsh, The theory of functions, O.U.P., Oxford, 1939.

\bibitem{tenenbaum}
G. Tenenbaum,
 Introduction to analytic and probabilistic number theory, Cambridge studies in advanced mathematics, 46. Cambridge University Press, Cambridge, 1995.


\end{thebibliography}
\end{document}